 \newcommand{\resp}{{\it resp.} }
\newcommand{\cf}{{\it cf.} }
\newcommand{\ie}{{\it i.e.} }
\newcommand{\eg}{{\it e.g.} }
\newcommand{\loccit}{{\it loc. cit.} }
\newcommand{\N}{\mathbb{N}}
\newcommand{\Q}{\mathbb{Q}}
\newcommand{\R}{\mathbb{R}}
\newcommand{\C}{\mathbb{C}}
  \newcommand{\Z}{\mathbb{Z}}
\renewcommand{\P}{\mathbb{P}}
  \newcommand{\sA}{{\mathcal{A}}}
  \newcommand{\J}{{\mathcal J}}
\newcommand{\sH}{{\mathcal{H}}}
\newcommand{\sF}{{\mathcal{F}}}
\newcommand{\sM}{{\mathcal{M}}}
\newcommand{\sO}{{\mathcal{O}}}
\newcommand{\sV}{{\mathcal{V}}}
\newcommand{\sX}{{\mathcal{X}}}
 \newcounter{spec}
\newtheorem{thm}{Theorem}[subsection]
\newtheorem{lemma}[thm]{Lemma}
\newtheorem{prop}[thm]{Proposition}
\newtheorem{cor}[thm]{Corollary}
\newtheorem{conj}[thm]{Conjecture}
\theoremstyle{definition}
\newtheorem{defn}[thm]{Definition}
\newtheorem{rem}[thm]{Remark}
\newtheorem{rems}[thm]{Remarks}
\numberwithin{equation}{section}
\renewcommand{\qed}{\hfill $\square$\medskip}
\begin{document}
\begin{sloppypar}

\title[The Betti map associated to a section of an abelian scheme.]{The Betti map associated to a section of an abelian scheme}
 
\author{{Y. Andr\'e, P. Corvaja, U. Zannier}  
 \\
 \\({with an appendix by Z. Gao})}

 \address{Yves Andr\'e\\ Institut Math\'ematique de Jussieu - Paris Rive Gauche\\ 4, Place Jussieu 75005 Paris\\ France }
 \email{yves.andre@imj-prg.fr}
 
 \address{Pietro Corvaja\\ Dipartimento di Scienze Matematiche, Informatiche e Fisiche\\
 Universit\`a di Udine\\ Via delle Scienze, 206\\ Udine\\ Italy}
 \email{pietro.corvaja@uniud.it}
 
 \address{Ziyang Gao\\ Institut Math\'ematique de Jussieu - Paris Rive gauche\\
 4, Place Jussieu 75005 Paris\\ France}
 \email{ziyang.gao@imj-prg.fr}
 
\address{Umberto Zannier\\
Scuola Normale Superiore\\
Piazza dei Cavalieri, 7\\
56126 Pisa\\
Italy}
\email{u.zannier@sns.it}
  
\email{ }
  \date{\today}
\keywords{Abelian scheme, torsion values, Zilber-Pink conjecture, Betti map, Kodaira-Spencer map, hyperelliptic curve, hyperbolic Ax-Schanuel theorem}
\subjclass{11G, 11J, 14K, 14M}
  \begin{abstract} Given a point $\xi$ on a complex abelian variety $A$, its abelian logarithm can be expressed as a linear combination of the periods of $A$ with \emph{real} coefficients, the \emph{Betti coordinates} of $\xi$. When $(A, \xi)$ varies in an algebraic family, these coordinates define a system of multivalued real-analytic functions. Computing its rank (in the sense of differential geometry) becomes important when one is interested about how often $\xi$ takes a torsion value (for instance, Manin's theorem of the kernel implies that this coordinate system is constant in a family without fixed part only when $\xi$ is a torsion section). 
 
 We compute this rank in terms of the rank of a certain contracted form of the Kodaira-Spencer map associated to $(A, \xi)$ (assuming $A$ without fixed part, and $\Z \xi$ Zariski-dense in $A$), and deduce some explicit lower bounds in special situations. For instance, we determine this rank in relative dimension $\leq 3$, and study in detail the case of jacobians of families of hyperelliptic curves. 
 
Our main application, obtained in collaboration with Z. Gao, states that if $A\to S$ is a principally polarized abelian scheme of relative dimension $g$ which has no non-trivial endomorphism (on any finite covering), and if the image of $S$ in the moduli space $\sA_g$ has dimension at least $g$, then the Betti map of any non-torsion section $\xi$ is generically a submersion, so that $\xi^{-1}A_{tors}$ is dense in $S(\C)$. \end{abstract}
\maketitle

 \tableofcontents

     \newpage
  
 \section{Introduction I: motivation.}  
 \subsection{}\label{pb} 
 
Let $A\to S$ be an abelian scheme of relative dimension $g$ over a (non necessarily complete) complex algebraic variety, and let $\xi:S\to A$ be a section. We assume that $\xi $ is not identically torsion, \ie does not take value in the kernel $A[n]$ of multiplication by any positive integer $n$ in $A$.
A general problem which arises in a number of contexts concerns the distribution of {\it torsion values} of $\xi$, or more accurately, of the points $s\in S$ such that $\xi(s)$ is a torsion point on the abelian variety $A_s$.

Let us thus introduce the closed subschemes of $S$: 
\begin{equation*}
  \xi^{-1}A[n] = \{s\in S\, :\, n\xi(s)=0\}, \qquad n=1,2,\ldots  
\end{equation*}
 Since $A[n]$ is finite etale over $S$, it is the {\it disjoint} union of {\it closed} subschemes corresponding to exact $m$-torsion for $m\mid n$, and $  \xi^{-1}A[n] $ decomposes accordingly. Each non-empty component 
 has codimension $\leq g$ in $S$, in virtue of Serre's codimension theorem \cite[Th. 3, p. V-18]{S}.

 \medskip
 The following heuristic dichotomy arises: 
 
\noindent (A) When $\dim S <g$, one expects under natural assumptions
   that  $\displaystyle\xi^{-1}A_{\rm{tors}}   $ is contained in  $ \bigcup_{n=1}^N   \xi^{-1}A[n]$ for some $N$, hence is not Zariski-dense. This is a typical question in the theory of  ``unlikely intersections", falling into the framework of the Pink-Zilber conjecture (see \cite{CMZ}).
 
\smallskip\noindent (B) When $\dim S\geq g$, one expects on the contrary, under natural assumptions to be discussed in the sequel, that  $ \xi^{-1}A_{\rm{tors}}$ is dense, even in the complex topology.
This will be one of the main concerns in this work.

 \subsection{} A useful tool to investigate the distribution of torsion values is provided by the so-called {\it Betti map}.   
 Any complex abelian variety $A$ of dimension $g$ may be presented as a complex torus $\C^{g}/\mathcal L\cong (\mathcal L\otimes_\Z \R)/\mathcal L$, and then every point $\xi$ of $A$ may then be identified by its $2g $ real coordinates in a mesh of the lattice $\mathcal L$. The aim of this paper is to understand how these so-called ``Betti coordinates" vary when $(A, \xi)$ moves in an algebraic family: one gets in this way a {\it multivalued real-analytic map} $\beta$, the {\it Betti map}, from the parameter space $S$ to $\R^{2g}$. 
 
   While this problem could have been raised in the XIXth century,  we are not aware of any trace 
 until recent occurrences in algebraic and diophantine geometry, where the relevance of the Betti map arises from the fact that its rational values correspond to torsion values of the section $\xi$. 
 
 \subsection{}  Here is a sample of classical or recent occurrences of torsion value problems and/or Betti maps.

(i) What may be the first explicit connection between torsion value problems and Betti maps can be found in a celebrated paper by Yu. Manin \cite{Man}. He connected the map with differential operators, leading to what is nowadays, after Grothendieck, called the Gauss-Manin connection (for families of curves). A byproduct of his analysis was a characterization of the cases when the Betti map is locally constant. If the abelian family has no fixed part, he proved that {\it the Betti coordinates are locally constant if and only if these constants are rational, \ie if and only if the section is identically torsion}\footnote{for the convoluted story of the proof of this theorem, see for instance \cite{B}\cite[\S 1]{A2}.}. 
\smallskip

(ii) Torsion values occur in connection to {\it Poncelet's game} and its higher dimensional generalizations in a paper by Ph. Griffiths and J. Harris \cite{GH}: given two conics, construct a polygon  which is inscribed in one conic and circumscribed to the other one.  Jacobi proved that the pair of conics determines an elliptic curve with a given point, such that such a polygon can be constructed if and only if the given point is torsion.
Following Jacobi, Griffiths and Harris took a ``variational approach" to this game, by varying the two conics, so obtaining a section of an elliptic family where the base parametrizes the pairs of conics; again torsion comes into play, being connected to the existence of such a polygon (and to the finiteness of the game). They also considered a three-dimensional situation, where conics are replaced by quadric surfaces and polygons by polyhedra. Again, the Betti map appears linking the behaviour of the game with variations of the quadrics.
In this context, the density of the torsion values in the base is far from obvious, and they prove it only in certain hypersurfaces of the base. More recently, the issue of torsion in the context of Poncelet's theorem has been revisited by N. Hitchin, and turned out to be related to the Painlev\'e VI equation \cite{Hi}.
\smallskip

(iii) Torsion values and the Betti map also occur  in the context of {\it Pell equations} in polynomials, which were already studied by Abel in connection with integration of differentials; motivated by this context, B. Mazur recently raised an issue concerning the ``Pellian'' hyperelliptic curves in the space of all hyperelliptic curves of given genus (see \cite{CMZ}). \smallskip 

(iv) These Pell equations in polynomials have number theoretic significance, but also appear in investigations related to the Schr\"odinger equation (see \cite{Krich}). \smallskip

(v) The Pell equation involving the hyperelliptic family of given genus, but now restricting to field of real numbers, appears also in an issue raised by J.-P. Serre, in connection with an incomplete argument in a classical paper by R. Robinson \cite{Rob}. 
An argument for filling in this gap was given by B. Lawrence, and we offer an independent one in Appendix I. 
\smallskip

(vi) Still from a different perspective, C. Voisin \cite{V} considered very recently a closely related problem, this time motivated by the investigation of Chow groups. In the context of Lagrangian fibrations on hyperk\"ahler manifolds,  she uses methods of her own for the case $g\leq 2$, and our Corollary \ref{cor2} to prove the desired conclusion for $g\leq 4$.

  \smallskip

(vii) Betti coordinates also occur in a forthcoming work by Z. Gao and Ph. Habegger on the geometric Bogomolov conjecture \cite{GaoHabegger}.

 \subsection{}  In this paper, we undertake a systematic study of the Betti map $\beta$ associated to $(A, \xi)$, and prove the density of torsion values under some fairly natural hypotheses.  
    We relate the derivative of the Betti map to the Kodaira-Spencer map.
 Functional transcendence also appears, through a monodromy theorem by the first author, but also via a very recent result, namely the proof of a version of Ax-Schanuel conjecture in the context of subvarieties of the moduli space $\sA_g$ and its uniformization.

  \section{Introduction II: main results.}  
      
      \subsection{Rank of the Betti map and torsion values} Let $A\to S$ be an abelian scheme of relative dimension $g$ over a  smooth complex algebraic variety of dimension $d$, and let $\xi: S\to A$ be a section. Let $\beta: \tilde S \to \R^{2g}$ be the associated (real analytic) Betti map, where $\tilde S$ denotes the universal covering of $S(\C)$ (\cf \S 3 for a precise definition and discussion of this map)\footnote{actually, it also depends on a choice of branch of abelian logarithm $\lambda$, \cf \ref{bet}.}. In this paper, we focus on the {\it (generic) rank of $\beta$} as defined in differential topology, namely, the maximal value of the rank of the derivative $d \beta (\tilde s)$ when $\tilde s $ runs through $\tilde S$. We denote it by $\,{\rm{rk}} \, \beta$.

 As explained above, the motivation for studying ${\rm{rk}} \, \beta$ comes from the relation to torsion values of $\xi$. The following proposition, which is a simple consequence of the constant rank theorem and of the fact that $\Z[1/p]$ is dense in $\R$, makes this relation explicit.

 \begin{prop} The following are equivalent:
 \begin{enumerate} \item ${\rm{rk}} \, \beta\geq 2g$,
 \item $\beta$ is a submersion on a dense open subset of $\tilde S$,
 \item the image of $\beta$ contains a dense open subset of $\R^{2g}$,
 
 and imply\footnote{we don't know whether the converse holds: the image of $\beta$ might be dense without containing a dense open subset.}:
 
 \item for every prime $p$, $\,\xi^{-1}A[p^\infty]$ is dense in $S(\C)$ (for the complex topology),
 \item $  \xi^{-1}A_{tors}$ is dense in $S(\C)$.   \qed
  \end{enumerate}  
  \end{prop}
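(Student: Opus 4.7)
The plan is to combine the real-analytic rank stratification on $\tilde S$ with the density of $\Z[1/p]^{2g}$ in $\R^{2g}$; the argument is essentially formal, relying on the constant rank / local submersion theorem and the equivariance of $\beta$ under the deck transformations of $\tilde S\to S(\C)$.

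For (1) $\iff$ (2), I would note that the integer-valued rank function $\tilde s\mapsto\mathrm{rk}\,d\beta(\tilde s)$ on $\tilde S$ is lower semicontinuous, and since $\beta$ is real-analytic on the connected real-analytic manifold $\tilde S$, the locus where this rank attains its maximum value $\mathrm{rk}\,\beta$ is the complement of a proper real-analytic subset of $\tilde S$, hence open and dense. As the rank is bounded by $2g$, condition (1) forces the maximum to equal $2g$, and that dense open subset is exactly the submersion locus of $\beta$. For (2) $\Rightarrow$ (3), the local submersion theorem says $\beta$ is an open map near each submersion point, so $\beta(\tilde S)$ has non-empty interior. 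To upgrade non-empty interior to a \emph{dense} open subset of $\R^{2g}$, I would exploit the equivariance of $\beta$ under the deck group of $\tilde S\to S(\C)$: this gives an affine action on $\R^{2g}$ whose translation part contains the full-rank lattice $\Z^{2g}$ coming from the branch-change ambiguity of the abelian logarithm $\lambda$, so translating a non-empty open subset of $\beta(\tilde S)$ by $\Z^{2g}$ yields a dense open subset of $\R^{2g}$ still contained in $\beta(\tilde S)$. Conversely, (3) $\Rightarrow$ (1): if $\mathrm{rk}\,\beta<2g$, the rank stratification writes $\beta(\tilde S)$ as a countable union of real-analytic submanifolds of $\R^{2g}$ of positive codimension, hence as a meagre subset, which cannot contain a dense open set.

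For (2) $\Rightarrow$ (4), and consequently (5) since $A[p^\infty]\subset A_{\rm tors}$: fix a prime $p$, a point $s\in S(\C)$, and a neighbourhood $W$ of $s$. By density of the submersion locus in $\tilde S$ and surjectivity of $\pi:\tilde S\to S(\C)$, pick $\tilde s\in\pi^{-1}(W)$ at which $\beta$ is a submersion. A sufficiently small open neighbourhood $\tilde V$ of $\tilde s$ inside $\pi^{-1}(W)$ has $\beta(\tilde V)$ open in $\R^{2g}$, so it meets the dense subgroup $\Z[1/p]^{2g}$. Under the identification of $n$-torsion values of $\xi$ with Betti coordinates in $\tfrac{1}{n}\Z^{2g}/\Z^{2g}$, any $\tilde s'\in\tilde V$ with $\beta(\tilde s')\in\Z[1/p]^{2g}$ projects to a point $\pi(\tilde s')\in W$ with $\xi(\pi(\tilde s'))$ of $p$-power order, proving density of $\xi^{-1}A[p^\infty]$ in $S(\C)$.

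The only step which is not purely differential-topological is the density qualifier in (3); this is where one has to invoke the structure of $\beta$ beyond pointwise submersivity, namely its equivariance under the monodromy/branch-change of $\lambda$, or equivalently pass through the well-defined Betti map to the real torus $(\R/\Z)^{2g}$ and note that openness of the image there automatically lifts to density of the image of $\beta$ in $\R^{2g}$. Everything else is routine constant-rank-theorem input together with the density of $p$-power rationals.
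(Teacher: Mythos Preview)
Your approach matches what the paper indicates (it gives no proof beyond the remark that the proposition ``is a simple consequence of the constant rank theorem and of the fact that $\Z[1/p]$ is dense in $\R$''). Your arguments for (1)$\Leftrightarrow$(2), (3)$\Rightarrow$(1), and (2)$\Rightarrow$(4)$\Rightarrow$(5) are correct and are exactly the intended ones.

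There is, however, a gap in your justification of the \emph{density} clause in (2)$\Rightarrow$(3). Changing the branch $\lambda$ of the abelian logarithm shifts the \emph{map} $\beta$ by an integer vector; it does not manufacture new points in the image of a \emph{fixed} $\beta$. What genuinely acts on $\operatorname{Im}\beta$ is the deck group, via $\beta(\delta\tilde s)=(\beta(\tilde s)+\nu_\delta)M_\delta^{-1}$ (cf.\ Remark~\ref{remI}(2) in the paper), but there is no reason in general for the translation parts $\nu_\delta$ to span a full-rank lattice: that amounts to the relative monodromy of the abelian logarithm being maximal, which is exactly the content of \cite[Th.~3]{A1} invoked later under the extra hypotheses~$(*)$ that are \emph{not} assumed in this proposition. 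Your alternative route through the torus is also insufficient as stated: a non-empty open subset of $(\R/\Z)^{2g}$ does not lift to a dense subset of $\R^{2g}$, and $\operatorname{Im}\beta$ need not coincide with the full preimage of its reduction mod $\Z^{2g}$. Since the paper itself supplies no argument here and the density in (3) plays no role in deriving (4)--(5) (which you correctly obtain directly from (2)), this is plausibly a mild imprecision in the statement rather than a defect in your overall strategy.
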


Because  ${\rm{rk}} \, \beta$ is invariant by dominant pull-back $S'\to S$, one may assume that for fixed $n\geq 3$, $A[n]$ is a disjoint union of copies of $S$ (\ie $n$-torsion is $S$-rational). One may also replace $A$ by any isogenous abelian scheme and $\xi$ by the corresponding pull-back,  and assume in particular  that $A$ is principally polarized. 

This gives then rise to a modular map $\mu_A: \, S \to \sA_{g,n}$ such that $A$ is the pull-back by $\mu_A$ of the universal principally polarized abelian scheme (with level $n$ structure) $\sX_{g,n}$ on $\sA_{g,n}$. 
 Moreover $\xi$ gives rise to a modular map $\mu_\xi: \, S \to \sX_{g,n}$ lifting $\mu_A$
    \[  \xymatrix@-1pc {A   \ar[d] \ar[r] &  \sX_{g,n} \ar[d]  \\  S  \ar[ru]^{\mu_\xi}     \ar[r]_{\mu_A} &  \sA_{g,n}} \] 
 such that $\xi$ is the pull-back by $\mu_\xi$ of the diagonal map  $ \sX_{g,n}\to  \sX_{g,n}\times_{\sA_{g,n}}\sX_{g,n}$.

       If $d_{\mu_\xi}$ denotes the dimension of the image of $\mu_\xi$, we have the upper bound
      \begin{equation} \label{E.1.8}
   \,{\rm{rk}} \, \beta\, \leq 2\cdot\min ({d_{\mu_\xi}}, g).
      \end{equation}

     Of course, this inequality may be strict in some ``degenerate" cases, \eg when $\xi$ is the $0$ section or when $(A, \xi)$ is constant, but  
   we conjecture: 
 
 \begin{conj} If $\,\Z \xi\,$ is Zariski-dense in $A$, and $A$ has no fixed part (over any finite etale covering of $S$), then \eqref{E.1.8} is an equality. 
 \end{conj}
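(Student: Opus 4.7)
The plan is to prove the equality by identifying the derivative of $\beta$ with a complex-linear map built from the Kodaira-Spencer data, then using recent transcendence results (a hyperbolic Ax-Schanuel theorem for $\sA_g$ together with the author's monodromy theorem alluded to in the introduction) to rule out an ``unexpected'' drop in its rank.

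\textbf{Step 1 (reductions).} Since $\,{\rm{rk}}\, \beta$ is invariant under dominant pullback and isogeny, I pass to the situation of the introduction: $A/S$ is principally polarized with level $n \geq 3$ structure, $S$ is smooth with $\mu_\xi$ generically finite onto its image, so $d_{\mu_\xi} = d = \dim S$; the target to beat is $2\min(d,g)$.

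\textbf{Step 2 (differentiate $\beta$).} On a simply connected open $U \subset \tilde S$ choose a flat integral frame $(\omega_1, \ldots, \omega_{2g})$ of periods and write $\lambda(\xi) = \sum_i \beta_i\, \omega_i$. Since the $\omega_i$ are horizontal for the Gauss-Manin connection $\nabla^{GM}$,
\[
  \nabla^{GM}\lambda(\xi) = \sum_i (d\beta_i)\,\omega_i.
\]
On the other hand, $\nabla^{GM}\lambda(\xi)$ may be computed via the Kodaira--Spencer map of the pair $(A,\xi)$: it is the contraction of the usual Kodaira--Spencer class of $A/S$ with (the Lie-algebra lift of) $\xi$, producing a class in $\mathrm{Lie}(A_s)$. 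This identifies $d\beta$, as an $\R$-linear map $T_sS \otimes \R \to \R^{2g} \cong H_1(A_s, \R)$, with the underlying real map of a canonical $\C$-linear map
\[
  \Phi_s:\; T_sS \longrightarrow \mathrm{Lie}(A_s) \cong \C^g,
\]
so that $\,{\rm{rk}}\, \beta = 2\cdot {\rm{rk}}_\C\,\Phi_s$ at a generic point. Thus the conjecture reduces to the statement ${\rm{rk}}_\C\,\Phi_s = \min(d,g)$ generically.

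\textbf{Step 3 (rank of $\Phi_s$ via Ax-Schanuel).} By construction, $\Phi_s$ factors through the differential of $\mu_\xi$, so ${\rm{rk}}_\C\,\Phi_s \leq \min(d,g)$; the content of the conjecture is the reverse inequality. Suppose generically ${\rm{rk}}_\C\,\Phi_s < \min(d,g)$. Lifting to the uniformization of $\sX_{g,n}$, the excess vectors in $\ker\Phi_s$ translate into a non-trivial algebraic relation between the period matrix and the logarithm of $\xi$ along $\mu_\xi(S)$. The hyperbolic Ax-Schanuel theorem for $\sA_g$ (in the mixed formulation for $\sX_{g,n}$) then forces $\mu_\xi(S)$ to lie in a proper \emph{weakly special} subvariety of $\sX_{g,n}$, which, projected via the diagonal $\sX_{g,n} \to \sX_{g,n}\times_{\sA_{g,n}}\sX_{g,n}$, translates into $\Z\xi$ being contained in a proper abelian subscheme of $A$ up to torsion, or else into $A$ possessing a non-trivial fixed part after a finite étale cover. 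Either alternative contradicts a hypothesis of the conjecture. When $d < g$, one argues analogously, replacing Ax-Schanuel by the monodromy theorem of the first author, which already rules out the drop by controlling the image of the monodromy representation on $H_1(A_s,\Z)$.

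\textbf{Main obstacle.} The heart of the matter is Step 3: matching the kernel of $\Phi_s$ (a purely infinitesimal object) with a global weakly special locus on which the hypotheses of the conjecture bear. The delicate point is that a priori the Ax-Schanuel conclusion produces an algebraic locus on the universal cover whose \emph{group-theoretic} nature (abelian subscheme vs.\ fixed part vs.\ constant section) must then be unraveled from the Zariski-density of $\Z\xi$ combined with the no-fixed-part hypothesis; this step is precisely where one expects the full strength of both assumptions in the conjecture to be used.
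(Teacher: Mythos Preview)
The statement you are attempting to prove is labeled a \emph{Conjecture} in the paper and is \emph{not} proven there in full generality. The paper establishes the formula ${\rm{rk}}\,\beta = 2\max_\varpi {\rm{rk}}\,\theta_\xi^\varpi$ (Theorem~\ref{rk2}) under the same hypotheses, but does not show that this maximum equals $\min(d_{\mu_\xi},g)$ in general. Only special cases are settled: $g\le 3$ (Theorem~\ref{tg=3}), the hyperelliptic family (Theorem~\ref{hyp}), and the case ${\rm{End}}_S A=\Z$ with $d_{\mu_A}\ge g$ (Theorem~\ref{lbrk}). The last of these already requires a non-trivial representation-theoretic classification (Theorem~\ref{T8}) together with the pure Ax--Schanuel theorem for $\sA_g$ (Appendix~II).

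Your Step~2 is close in spirit to the paper's \S4 but imprecise: the $\C$-linear map satisfying ${\rm{rk}}\,d\beta(\tilde s)=2\,{\rm{rk}}\,\Phi_{\tilde s}$ is the contracted Kodaira--Spencer map $\theta_\xi^{\varpi(\tilde s)}$ of the $1$-motive $[\Z\to A]$, where $\varpi(\tilde s)\in(gr^0)^\vee$ has coefficients that are \emph{transcendental} in the periods and logarithms at $\tilde s$ (see \eqref{e30}); it is not simply the contraction of the Kodaira--Spencer class of $A$ with a lift of $\xi$. Passing from this pointwise identity to the generic maximum over all $\varpi$ is the content of Theorem~\ref{rk+} and uses the two-variable monodromy trick of \S\ref{mono}, not a formality.

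The genuine gap is Step~3. Your dichotomy ``proper weakly special subvariety of $\sX_{g,n}$ $\Rightarrow$ either $\Z\xi$ is not Zariski-dense or $A$ has a fixed part'' is false. Weakly special subvarieties of $\sX_{g,n}$ sit over weakly special subvarieties of $\sA_g$, and the latter include all Shimura subvarieties of PEL or Hodge type; landing in one of these (extra endomorphisms, Mumford-type families, etc.) contradicts neither hypothesis of the conjecture. The paper's Theorem~\ref{T8} is exactly the device that, under the \emph{additional} hypothesis ${\rm{End}}_S A=\Z$, rules out such intermediate loci via a Borovoi--Satake--Deligne classification, and even then only yields ${\rm{rk}}\,\beta\ge 2g$ when $d_{\mu_A}\ge g$, not the full equality with $2\min(d_{\mu_\xi},g)$. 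The example in \S8.3 (a $16$-dimensional PEL family with complex multiplication) exhibits a situation where the paper's own method via Corollary~\ref{cor2} breaks down although neither hypothesis of the conjecture is violated; your Step~3 would have to handle precisely such cases, and nothing in your sketch does. What you flag as the ``main obstacle'' is not a residual technicality but the open content of the conjecture.
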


 \subsection{Formulas for ${\rm{rk}} \, \beta$.}  
  Passing to universal coverings, $\mu_A$ lifts to a map $$\tilde \mu_A := \tilde S \stackrel{\tilde s \mapsto Z(\tilde s)}{\to} \frak{H}_g$$ toward the Siegel space, where  $Z = \Omega_2 \Omega_1^{-1}$ is given in terms of abelian periods as usual. Let also $L:\tilde{S}\to \C^g$ stand for an abelian logarithm  of the section $\xi:S\to A$ multiplied on the right by $\Omega_1^{-1}$ (\cf \ref{sn}). 

Our first formula is analytic:

\begin{thm}\label{rk1} Assume that $A$ has no fixed part (over any finite etale covering of $S$), and $\Z \xi$ is Zariski-dense in $A$. Up to replacing $S$ by an affine dense open subset, we may assume that its tangent bundle is free, generated by $d$ derivations $\partial_1, \ldots, \partial_d$. Then \begin{equation}  {\rm{rk}} \, \beta = 2 \cdot \max_{\mu_0, \ldots, \mu_g\in \C^{g+1}} \max_{\tilde s\in \tilde S}\,{\rm{rk}}  \begin{pmatrix} \mu_0 \partial_i L_j(\tilde s) - \sum_1^g \mu_k \partial_i Z_{kj}(\tilde s) \end{pmatrix}_{ij}. \end{equation} 
\end{thm}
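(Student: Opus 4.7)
The proof splits into a direct differential-geometric computation giving one inequality, and a density/functional-transcendence argument for the reverse.

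\emph{Step 1: Differentiating the Betti coordinates.} From $\lambda = \Omega_1 b_1 + \Omega_2 b_2$ with $b_1, b_2 \in \R^g$, left-multiplying by $\Omega_1^{-1}$ and conjugating gives $L = b_1 + Z b_2$ and $\bar L = b_1 + \bar Z b_2$, whence $b_2 = (Z-\bar Z)^{-1}(L-\bar L)$ and $b_1 = L - Zb_2$. Since $L$ and $Z$ are holomorphic ($\partial_i \bar L = \partial_i \bar Z = 0$),
\[ \partial_i b_2 = (Z-\bar Z)^{-1}\bigl[\partial_i L - (\partial_i Z)\, b_2\bigr], \qquad \partial_i b_1 = -\bar Z\, \partial_i b_2. \]
For a real tangent vector $v = v^{1,0} + \overline{v^{1,0}}$, setting $w := \partial b_2(v^{1,0}) \in \C^g$, one obtains $d\beta(v) = 2\bigl(\Re(-\bar Z w),\, \Re w\bigr)$. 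Because $\IM Z$ is positive definite, $w \mapsto (\Re(-\bar Z w), \Re w)$ is a real-linear bijection $\C^g \iso \R^{2g}$, whence
\[ {\rm rk}_\R\, d\beta(\tilde s) \;=\; 2\,{\rm rk}_\C\, (\partial_i b_{2,j}(\tilde s))_{ij} \;=\; 2\,{\rm rk}_\C\, N\bigl(b_2(\tilde s),\tilde s\bigr), \]
with $N(\nu,\tilde s) := \bigl(\partial_i L_j(\tilde s) - \sum_k \nu_k \partial_i Z_{kj}(\tilde s)\bigr)_{ij}$ (the invertible factor $(Z-\bar Z)^{-1}$ is cleared). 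Specializing $\mu = (1, b_2(\tilde s))$ in the asserted formula yields the inequality ``$\leq$''.

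\emph{Step 2: Reducing the reverse inequality.} Let $r^\ast$ denote the right-hand maximum. Since $N(\mu,\tilde s)$ is polynomial in $\mu \in \C^{g+1}$, semicontinuity of rank shows that $\{{\rm rk}\, N = r^\ast\}$ is Zariski-open in $\mu$ for suitable $\tilde s$; this open set meets $\{\mu_0 \neq 0\}$, and after rescaling to $\mu_0 = 1$ it meets $\R^g \subset \C^g$ (no nonzero polynomial vanishes on $\R^g$). Hence there exist $(\nu_\ast, \tilde s_\ast)$ with $\nu_\ast \in \R^g$ realizing $r^\ast$, and the theorem reduces to exhibiting some $\tilde s \in \tilde S$ for which $b_2(\tilde s)$ lies in the nonempty real-analytically open set
\[ V := \bigl\{(\tilde s, \nu) \in \tilde S \times \R^g : {\rm rk}\, N((1,\nu), \tilde s) = r^\ast\bigr\}. \]

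\emph{Main obstacle.} The technical crux is showing that the graph of $b_2:\tilde S \to \R^g$ is not trapped in the complement of $V$. Were it so, the real-analytic functions $b_{2,k}$---themselves rational in $L, \bar L, Z, \bar Z$ by construction---would satisfy a nontrivial algebraic-differential relation in $L$, $Z$ and their $\partial_i$-derivatives along $\tilde S$. Under the hypotheses ($A$ has no fixed part over any finite \'etale cover of $S$, and $\Z\xi$ is Zariski-dense in $A$), such a relation is excluded by the bigness of algebraic monodromy: Manin's theorem of the kernel combined with the first author's monodromy theorem---and, as needed, the hyperbolic Ax-Schanuel statement for $\sA_g$ alluded to in the introduction---provide the required functional transcendence. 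Translating this exclusion into non-avoidance of $V$ by the Betti graph is the heart of the argument.
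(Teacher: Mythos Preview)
Your Step~1 is correct and essentially coincides with the paper's computation: the identity ${\rm rk}_\R\, d\beta(\tilde s) = 2\,{\rm rk}_\C\, H_{-\beta_2(\tilde s)}(\tilde s)$ (in the paper's notation, your $N((1,\nu),\cdot)$ is $H_{-\nu}$) is exactly equation~(4.14), and it immediately gives the inequality $\leq$.

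The gap is in Step~2, at precisely the point you label ``Main obstacle''. You have correctly reduced the reverse inequality to showing that the graph of $b_2$ cannot be confined to the vanishing locus of all $(r^\ast)\times(r^\ast)$ minors of $H_{-\nu}$, and you correctly suspect that monodromy/transcendence is the tool. But you have not supplied the mechanism, and the naive approach fails: as the paper observes explicitly (Remark~4.2(2)), the matrix $I_{-\beta}$ (equivalently $H_{-\beta_2}$) is \emph{monodromy-invariant}, so letting $\pi_1(S)$ act in the obvious way gives nothing. Simply invoking ``big monodromy'' or Manin's theorem does not break this invariance.

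The paper's key idea, which your proposal is missing, is a \emph{separation of holomorphic and antiholomorphic variables}. One observes that $\det(\Omega,\bar\Omega)^r$ times any minor of $I_{-\beta}$ is a polynomial in holomorphic and antiholomorphic functions on $\tilde S$, hence lies in the image of the injective ring map $\iota\colon \sO(\tilde S)\otimes\bar\sO(\tilde S)\to \sO(\tilde S^{real})$. Lifting through $\iota$, one obtains a relation in two \emph{independent} variables $(\tilde s,\tilde s')\in \tilde S\times\tilde S$, on which $\pi_1(S)\times\pi_1(S)$ acts (rather than the diagonal $\pi_1(S)$). Andr\'e's theorem \cite[Th.~3]{A1} on the relative monodromy of abelian logarithms---this is where the hypotheses (no fixed part, $\Z\xi$ Zariski-dense) enter---then says that under this doubled action the translation parameter $\breve\nu$ sweeps out all of $\C^{2g}$, forcing the minor of $I_\nu$ to vanish for \emph{every} $\nu$, a contradiction.

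Two further remarks. First, your detour through $\nu\in\R^g$ is unnecessary: the paper works directly with $\nu\in\C^{2g}$ (for $I_\nu$) and deduces the $H_\mu$-statement afterwards. Second, your appeal to hyperbolic Ax--Schanuel is a red herring for this theorem: Ax--Schanuel is used in the paper only for Theorem~\ref{lbrk} (via Appendix~II), not for Theorem~\ref{rk1}, whose proof relies solely on \cite[Th.~3]{A1}.
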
   
 
 By letting $\mu_0=0$, one deduces the following criterion for generic submersivity, which does no longer involve the section:
 
 \begin{cor}\label{cor1} In the same situation, assume that $d\geq g\,$  and that  $ {\rm{rk}} \, \beta < 2g$. Then for any $\tilde s\in \tilde S$ and any $\mu \in \C^g$, there exists a complex-analytic subvariety of $\tilde S$ passing through $\tilde s$, of dimension $\geq d+1-g$, on whicht $\,Z.\mu\,$ is constant. 
 \end{cor}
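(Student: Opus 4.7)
The corollary is obtained from Theorem \ref{rk1} by specializing to $\mu_0 = 0$ and translating the resulting bound on a Jacobian rank into a fiber-dimension statement for an explicit holomorphic map.

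First, I would substitute $\mu_0 = 0$ in the formula of Theorem \ref{rk1}. Since the double max on the right is taken over all $(\mu_0, \ldots, \mu_g) \in \C^{g+1}$, restricting to $\mu_0 = 0$ still yields a lower bound:
\[
{\rm{rk}}\,\beta \;\geq\; 2 \cdot \max_{\mu \in \C^g}\,\max_{\tilde s \in \tilde S}\; {\rm{rk}}\left(\sum_{k=1}^g \mu_k\, \partial_i Z_{kj}(\tilde s)\right)_{ij}.
\]
This step is precisely what allows one to eliminate all reference to the section $\xi$ (encoded by $L$) from the inequality.

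Second, I would recognize the $d \times g$ matrix appearing inside the inner rank as, up to sign, the Jacobian of the holomorphic map
\[
f_\mu : \tilde S \longrightarrow \C^g, \qquad \tilde s \longmapsto Z(\tilde s) \cdot \mu
\]
in the trivialization $(\partial_1, \ldots, \partial_d)$, using the symmetry $Z_{kj} = Z_{jk}$ of the Siegel matrix. The hypothesis ${\rm{rk}}\,\beta < 2g$ then forces ${\rm{rk}}\,df_\mu(\tilde s) \leq g - 1$ at \emph{every} $\tilde s \in \tilde S$ and for \emph{every} $\mu \in \C^g$.

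Third, I would conclude by the standard fiber-dimension theorem for holomorphic maps between complex manifolds: since $df_\mu$ has rank at most $g-1$ everywhere, the image $f_\mu(\tilde S)$ has analytic dimension at most $g-1$, and by upper semicontinuity of fiber dimension (Remmert), the fiber $f_\mu^{-1}(f_\mu(\tilde s))$ has local dimension at $\tilde s$ at least $d - (g-1) = d + 1 - g$. This fiber is a complex-analytic subvariety of $\tilde S$ passing through $\tilde s$ on which $Z \cdot \mu$ is by construction constant, which is exactly the claim.

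There is no real obstacle beyond invoking Theorem \ref{rk1}: the corollary is a clean specialization of the master formula followed by a textbook fiber-dimension estimate. The only subtlety to keep in mind is that the rank bound must hold at every point of $\tilde S$ (not merely generically), so that the semicontinuity argument yields a fiber of the correct dimension even through a possibly non-generic $\tilde s$.
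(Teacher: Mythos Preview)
Your proof is correct and follows essentially the same route as the paper: set $\mu_0=0$ in Theorem~\ref{rk1}, recognize the resulting matrix (via the symmetry $Z_{kj}=Z_{jk}$) as the Jacobian of the holomorphic map $\tilde s\mapsto Z(\tilde s)\mu$, and use that ${\rm rk}\,\beta<2g$ forces this Jacobian to have rank $\le g-1$ at \emph{every} point, so that each fiber has dimension at least $d-(g-1)$. The paper phrases the last step more tersely (``its (complex-analytic) rank is $<g$, hence its non-empty fibers are complex analytic subvarieties of dimension $\ge \dim S+1-g$''), but your explicit invocation of semicontinuity/Remmert is the same argument spelled out.
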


Our second formula is nothing but an algebraic intrinsic reformulation of the former in terms of Kodaira-Spencer maps (\cf \S \ref{fhtr}).  
Recall the
 Kodaira-Spencer map of $A$ is the $\mathcal{O}_S$-linear map
     \begin{equation} \theta_A: \, T_S \otimes  \Omega_A \to  \mathcal{H}^1_{\mathrm{dR}}(A/S)/\Omega_A \cong Lie \,A,  \end{equation}
    induced by the Gauss-Manin connection. Note that the vector bundle $ \Omega_A$ of invariant $1$-forms is the $gr^1$-notch of the Hodge filtration, whereas its dual $Lie \,A$ appears as the   $gr^0$-notch via the polarization  (see \S \ref{KS} for a presentation and discussion of $\theta_A$).

The Kodaira-Spencer map of the $1$-motive $[\Z \stackrel{1\mapsto \xi}{\to} A  ]$ attached to $(A,\xi)$ (in the sense of \cite[X]{D1}) is an enhanced version of $\theta_A$
   \begin{equation} \theta_\xi: \, T_S \otimes  gr^1  \to gr^0,  \end{equation} 
   where $gr^1 = \Omega_A$ again, but $gr^0$ is an extension of  $Lie\, A$ by $\sO_S$ (\cf \ref{KS}).
 
 Assuming for simplicity that $S$ is {\it affine}, we denote by  \begin{equation} \theta_\xi^\varpi: T_S   \to Lie\,A = (gr^1)^\vee \end{equation}  the contracted map, for any section $\varpi$ of $ (gr^0)^\vee$.
                
      \begin{thm}\label{rk2} Assume that $A$ has no fixed part, and $\Z \xi$ is Zariski-dense in $A$. Then 
      \begin{equation} 
      \displaystyle\,{\rm{rk}} \,  \beta \,  = 2\cdot   \max_{(\varpi_0, \ldots, \varpi_g)\in (gr^0)^\vee} \max_{s\in S} \,{\rm{rk}} \,\theta_\xi^\varpi(s). 
      \end{equation}   
      \end{thm}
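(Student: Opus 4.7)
The plan is to deduce Theorem \ref{rk2} from the analytic Theorem \ref{rk1} by unwinding the definition of the Kodaira--Spencer map of the $1$-motive $M = [\Z \xrightarrow{1\mapsto\xi} A]$ in terms of the very period data that appears in the analytic formula, so that the matrix in Theorem \ref{rk1} becomes, entry by entry, the matrix of the contracted map $\theta_\xi^\varpi$ in suitable bases. Shrinking $S$ to an affine open where all the relevant bundles are trivial, we choose a symplectic trivialization of $R^1\pi_\ast\Z$ giving normalized periods $Z = \Omega_2\Omega_1^{-1}$; let $(\omega_1,\ldots,\omega_g)$ be the basis of $\Omega_A$ adapted to this framing and $(e_1^\vee,\ldots,e_g^\vee)$ the dual basis of $Lie\,A$.

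The first step is the classical identity
\begin{equation*}
\theta_A(\partial_i,\omega_j) \;=\; \sum_{k=1}^g (\partial_i Z_{kj})\, e_k^\vee,
\end{equation*}
expressing the Kodaira--Spencer map of $A$ through Gauss--Manin on the Hodge frame; this follows from the definition of $Z$ as the matrix of periods against $(\omega_j)$ together with Griffiths transversality. The second, more delicate step concerns the enhanced map $\theta_\xi$. Here $gr^1 = \Omega_A$, while $gr^0$ fits in an exact sequence $0 \to \sO_S \to gr^0 \to Lie\,A \to 0$ coming from the Hodge filtration on the de Rham realization $T_{\mathrm{dR}}(M)$, i.e.\ the universal vector extension of $M$ in the sense of \cite[X]{D1}. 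Writing the normalized logarithm $L$ of $\xi$ as a horizontal lift of $\xi$ across this extension and computing Gauss--Manin on $T_{\mathrm{dR}}(M)$, one obtains a compatible frame $(e_1^\vee,\ldots,e_g^\vee,\epsilon)$ of $gr^0$ in which
\begin{equation*}
\theta_\xi(\partial_i,\omega_j) \;=\; \sum_{k=1}^g (\partial_i Z_{kj})\, e_k^\vee \,-\, (\partial_i L_j)\,\epsilon,
\end{equation*}
the sign being a matter of convention. Verifying this formula is the technical heart of the argument.

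Given these identifications, a section $\varpi$ of $(gr^0)^\vee$ is the same as an assignment $s \mapsto (\mu_1(s),\ldots,\mu_g(s),\mu_0(s)) \in \C^{g+1}$ in the dual basis, and a direct contraction yields that the matrix of $\theta_\xi^\varpi(s)\colon T_{S,s} \to (Lie\,A)_s$ in the bases $(\partial_i|_s)$ and the basis dual to $(e_j^\vee|_s)$ is precisely
\begin{equation*}
\bigl(\mu_0\, \partial_i L_j(s) \,-\, \textstyle\sum_{k=1}^g \mu_k\, \partial_i Z_{kj}(s)\bigr)_{ij}.
\end{equation*}
Since the rank at a point $s$ depends only on the value $\varpi(s) \in \C^{g+1}$, maximizing over sections $\varpi$ and over $s \in S$ is the same as maximizing over $(\mu_0,\ldots,\mu_g) \in \C^{g+1}$ and $\tilde s \in \tilde S$, which is exactly the quantity computed by Theorem \ref{rk1}; the equality of ranks follows. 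The main obstacle is the second step above: carefully bookkeeping the de Rham realization of the $1$-motive $M$, its Gauss--Manin connection, and the fact that the extension class of $gr^0$ is encoded, in the chosen frame, by the derivatives of the normalized logarithm $L$. Once this is clean, Theorem \ref{rk2} is a formal consequence of Theorem \ref{rk1}.
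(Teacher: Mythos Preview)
Your overall route---reduce Theorem~\ref{rk2} to Theorem~\ref{rk1} by identifying the matrix in the latter with the matrix of $\theta_\xi^\varpi$---is exactly the paper's strategy (carried out in \S\ref{fhtr}--\ref{KS}, where both theorems are in fact deduced from the slightly stronger Theorem~\ref{rk+}). The gap is in your ``classical identity''
\[
\theta_A(\partial_i,\omega_j)=\sum_k(\partial_iZ_{kj})\,e_k^\vee,\qquad
\theta_\xi(\partial_i,\omega_j)=\sum_k(\partial_iZ_{kj})\,e_k^\vee-(\partial_iL_j)\,\epsilon.
\]
These cannot hold as written: $\theta_A,\theta_\xi$ are $\sO_S$-linear, so in the algebraic basis $(\omega_j)$ their matrix entries are regular functions on $S$, whereas $\partial_iZ_{kj}$ and $\partial_iL_j$ are genuinely transcendental multivalued functions on $\tilde S$ (under monodromy $Z$ transforms fractionally linearly, so $\partial Z$ is not monodromy-invariant). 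In the paper's algebraic frame the matrix of $\theta_{A,\partial}$ is $T_\partial$ and that of $\theta_{\xi,\partial}$ is $\begin{pmatrix}\ell_\partial\\T_\partial\end{pmatrix}$, coming from the Gauss--Manin connection \eqref{e19}; these are \emph{not} equal to $\partial Z$ and $\partial L$.

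What actually links the two sides is the computation \eqref{e20}--\eqref{e23}: using the Legendre relation one finds $(\partial Z)\Omega_1=2\pi i\,{}^t\Omega_1^{-1}T_\partial$ and $(\partial L)\Omega_1=\ell_\partial+(M-\Lambda N)T_\partial$, so that $H_\mu\Omega_1=G_{\phi(\mu)}$ for the \emph{affine} reparametrization $\phi(\mu)=M-\Lambda N+2\pi i\,\mu\,{}^t\Omega_1^{-1}$, which depends on periods of the second kind. Thus the matrix of $\theta_\xi^\varpi$ and the matrix in Theorem~\ref{rk1} are not literally equal but differ by right-multiplication by the invertible $\Omega_1$ and an invertible affine change of the parameter; only after this twist do the maxima over $\varpi$ and over $\mu$ coincide (equation~\eqref{e24}). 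Your sketch skips precisely this step, which is the non-formal content of the passage from Theorem~\ref{rk1} to Theorem~\ref{rk2}.
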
    
    
    By letting $\varpi_0=0$, one deduces the following criterion for generic submersivity in terms of $\theta_A$ (which again does not involve the section)\footnote{this is the criterion used in \cite[th. 0.6]{V}.}:

     \begin{cor}\label{cor2} Under the same assumptions, if there exists $\omega\in \Gamma \Omega_A$ such that the  map  \begin{equation} \theta^\omega_A: \,T_S \,  {\to} \,Lie\,A, \;\:\partial \mapsto \theta_A(\partial\otimes \omega)  \end{equation}   has generic rank $g$, then $\,{\rm{rk}} \,  \beta \geq 2g$. 
     \end{cor}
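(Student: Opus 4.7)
The plan is to derive Corollary \ref{cor2} as an immediate consequence of Theorem \ref{rk2} by specializing the parameter $\varpi$ in the formula to have vanishing scalar component $\varpi_0=0$, which should reduce the $1$-motive contraction $\theta_\xi^\varpi$ to a contraction of $\theta_A$ alone.

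First I would unpack the extension $0\to\sO_S\to gr^0\to Lie\, A\to 0$ defining $gr^0$: dualizing yields $0\to\Omega_A\to(gr^0)^\vee\to\sO_S\to 0$, so the locus $\{\varpi_0=0\}$ of sections of $(gr^0)^\vee$ is naturally identified with sections of $\Omega_A$ via $\omega\mapsto(0,\omega)$. For such a $\varpi=(0,\omega)$, viewed as a linear form on $gr^0$, the composite $T_S\otimes\Omega_A\xrightarrow{\theta_\xi}gr^0\twoheadrightarrow Lie\, A$ coincides with $\theta_A$ by the very construction of $\theta_\xi$, so the contraction is explicitly
\[ \theta_\xi^{(0,\omega)}(\partial)(\omega')=\omega\bigl(\theta_A(\partial\otimes\omega')\bigr). \]
On the other hand, $\theta_A^\omega(\partial)=\theta_A(\partial\otimes\omega)\in Lie\, A$, paired with $\omega'\in\Omega_A$, gives $\omega'\bigl(\theta_A(\partial\otimes\omega)\bigr)$. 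The required identification $\theta_\xi^{(0,\omega)}=\theta_A^\omega$ therefore reduces to the symmetry
\[ \omega\bigl(\theta_A(\partial\otimes\omega')\bigr)=\omega'\bigl(\theta_A(\partial\otimes\omega)\bigr), \]
a standard feature of the Kodaira--Spencer map of a polarized abelian scheme (reflecting the classical fact that $\theta_A$ factors through $\mathrm{Sym}^2(Lie\, A)\subset\mathrm{Hom}(\Omega_A,Lie\, A)$ under the polarization). This is where the principal polarization enters, but as noted in the paragraph preceding \eqref{E.1.8} one may always reduce to that case by isogeny without affecting ${\rm rk}\,\beta$.

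With this identification in hand, Theorem \ref{rk2} yields
\[ {\rm rk}\,\beta\;\geq\;2\cdot\max_{\omega,s}{\rm rk}\,\theta_\xi^{(0,\omega)}(s)\;=\;2\cdot\max_{\omega,s}{\rm rk}\,\theta_A^\omega(s)\;\geq\;2g, \]
the last inequality being precisely the hypothesis of the corollary. The only delicate step is the symmetry used in step two; I expect this to be established in \S\ref{KS} in the course of setting up the Kodaira--Spencer machinery, or to be invoked as a well-known fact about polarized families.
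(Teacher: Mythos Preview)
Your proposal is correct and follows exactly the paper's approach: the paper derives Corollary~\ref{cor2} from Theorem~\ref{rk2} by the single phrase ``By letting $\varpi_0=0$'', and you have unpacked precisely what this means. Your identification $\theta_\xi^{(0,\omega)}=\theta_A^\omega$ via the symmetry of $\theta_{A,\partial}$ is exactly what is needed, and the paper does record this symmetry in \S\ref{KS} (``the matrix of the contracted map $\theta_{A,\partial}$ is $T_\partial$ (which is a symmetric matrix)''), confirming your expectation.
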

     
  \smallskip Let us give an indication on the method of proof of Theorem \ref{rk1}.  For any $\nu\in \C^{2g}$, let $I_\nu$ be the $d\times g$-matrix  with entries   $\displaystyle    \partial_i \Lambda_j   +  \sum_k \nu_k   \partial_i\Omega_{kj} $, where $\Lambda$ is a vector of abelian logarithms and $\Omega_{jk}$ a matrix of abelian periods (\cf \eqref{e5} below). The rank of $d\beta(\tilde s)$  can be computed in terms of vanishing/non vanishing of minors of $I_{-\beta(\tilde s)}$ (\cf \eqref{e7}). The point is then to show that the vanishing of such a minor identically on $\tilde S$ implies the vanishing of the corresponding minor of $I_\nu$ for any $\nu$. 
        
        The idea is to let monodromy act and use a theorem about relative monodromy of abelian logarithms with respect to abelian periods \cite[Th. 3]{A1}. This does not work directly because $I_{-\beta}$ is monodromy-invariant. The trick is that, after multiplication by some power of $\det \begin{pmatrix} \Omega_1 & \bar\Omega_1\\  \Omega_2 & \bar\Omega_2\end{pmatrix}$,  $I_{-\beta}$ becomes a polynomial in holomorphic and antiholomorphic multivalued functions on $S$, so that one can let two copies of $\pi_1(S)$ act on $\sO(\tilde S)\otimes \bar\sO(\tilde S)$, instead of standard monodromy (which corresponds to the diagonal action).

   \subsection{Applications} Let $d_{\mu_A}\leq d $ be the dimension of the image of the modular map $\mu_A$.
   
\noindent Using Corollary \ref{cor2}, an elementary analysis of webs of quadrics, combined with a result of \cite{A3}, allows to settle the case $g\leq 3$:

\begin{thm}\label{tg=3}
Suppose that the abelian scheme $A\to S$ has relative dimension $g\leq 3$, has no fixed part, and $d_{\mu_A}\geq g$. Then for every section $\xi$ not contained in a proper subgroup scheme, $\,{\rm{rk}} \,  \beta \geq 2g$.
\end{thm}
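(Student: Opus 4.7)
By Corollary \ref{cor2}, it suffices to exhibit a global section $\omega \in \Gamma \Omega_A$ for which the contracted Kodaira--Spencer map $\theta_A^\omega \colon T_S \to Lie\,A$ is generically surjective. After replacing $A$ by an isogenous principally polarized abelian scheme (which changes neither $\mathrm{rk}\,\beta$ nor any of the hypotheses) and using the principal polarization to identify $\Omega_A \cong (Lie\,A)^\vee$, one recasts $\theta_A$ as an $\sO_S$-linear map $\theta_A \colon T_S \to \mathrm{Sym}^2(Lie\,A)$. At a very general point $s \in S$, putting $V := Lie\,A_s$, we obtain a linear subspace $W \subseteq \mathrm{Sym}^2 V$ of dimension $d_{\mu_A} \geq g$; the equality (i.e. injectivity of $\theta_A$ along the modular direction under the no-fixed-part hypothesis) is precisely the infinitesimal Torelli-type statement of \cite{A3}. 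Under these identifications $\theta_A^\omega(s)$ becomes the map $\phi_\omega \colon W \to V$, $q \mapsto q(\omega, \cdot)$, and the problem reduces to producing $\omega \in V^\vee$ for which $\phi_\omega$ is surjective.

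This is a linear-algebraic question which I would treat case by case in $g$. The case $g=1$ is immediate. For $g=2$, the locus of singular forms in $\mathrm{Sym}^2 \C^2 \cong \C^3$ is a quadric cone, so any $2$-dimensional $W$ contains an invertible element $Q_1$; choosing $Q_2 \in W$ independent from $Q_1$, surjectivity of $\phi_\omega$ at generic $\omega$ amounts to $Q_1^{-1}Q_2$ not being scalar, which follows from linear independence of $Q_1, Q_2$.

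The substantive case is $g=3$. I would regard $W$ as a web of quadrics on $\P(V^\vee) \cong \P^2$, and let $B = \{\omega : \phi_\omega \text{ not surjective}\}$ be the bad locus, cut out by the cubic forms $\det[Q_1\omega \mid Q_2\omega \mid Q_3\omega]$ as $(Q_1,Q_2,Q_3)$ ranges over triples in $W$. If $B$ were all of $\P^2$, then every $\omega$ would admit a common polar partner $v^* \neq 0$ satisfying $q(\omega, v^*)=0$ for all $q \in W$, so that the incidence $I = \{(\omega, v^*) : q(\omega, v^*) = 0\ \forall q \in W\} \subset \P^2 \times \P^2$ would surject onto the first factor. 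One then enumerates the degenerate configurations of $W$ compatible with this constraint --- essentially, $W$ would have to factor through $\mathrm{Sym}^2 V_0$ for some proper $V_0 \subsetneq V$, all $Q \in W$ would share a common singular point, or $W$ would fall into one of a short list of exceptional projective configurations of nets of conics --- and each such configuration can be ruled out using the no-fixed-part hypothesis together with $d_{\mu_A} \geq g$. For instance, a factorization through $\mathrm{Sym}^2 V_0$ forces the differential of $\mu_A$ into a proper sub-tangent space of $\sA_g$, contradicting $d_{\mu_A} \geq 3$; this is where the input from \cite{A3} is used a second time.

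The principal obstacle is the $g=3$ case: specifically, the complete projective classification of webs of quadrics in $\P^2$ satisfying $B = \P^2$, and the systematic exclusion of every such class by the geometric hypotheses. The remaining ingredients --- the polarization-based identification, infinitesimal Torelli, and the reduction via Corollary \ref{cor2} --- are essentially formal.
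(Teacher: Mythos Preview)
Your overall strategy matches the paper's: reduce via Corollary~\ref{cor2} to finding $\omega$ with $\theta_A^\omega$ of rank $g$, view the image of Kodaira--Spencer at a general point as a $g$-dimensional space $W$ of quadratic forms on $V = Lie\,A_s$, and analyze $W$ case by case in $g$. The paper also reduces to $d = d_{\mu_A} = g$ and uses the injectivity of $T_{S,s} \to \mathrm{Sym}^2 V$ from \cite[2.1.2]{A3}, so your first citation of \cite{A3} is fine.

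The genuine gap is in the $g=3$ step, and it concerns \emph{which} result from \cite{A3} is needed. You invoke only ``infinitesimal Torelli'' (injectivity of $\theta_A$ along the modular direction), but what the paper actually uses---and what your argument does not supply---is the much deeper theorem \cite[4.2.2]{A3}: for abelian schemes of restricted PEM type, the Kodaira--Spencer map is \emph{non-degenerate}, i.e.\ some $\theta_{A,\partial}$ has full rank $g$, equivalently $W$ contains a non-singular quadratic form. The paper then proves an elementary linear-algebra lemma (its Lemma~6.1.1): a $3$-dimensional space of quadratic forms on $\C^3$ \emph{containing a non-degenerate form} admits a vector not in the radical of any nonzero member. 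The non-degeneracy hypothesis is essential and is exactly what \cite[4.2.2]{A3} provides; the paper then checks that every $g=3$ case with $d_{\mu_A}\ge 3$ is of restricted PEM type.

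Your proposed alternative---classify all ``bad'' webs $W$ and exclude each geometrically---does not close this gap. In fact if one carries out the classification, the bad $3$-dimensional $W$ (those for which $\phi_\omega$ is never surjective) turn out to be exactly those contained in $\mathrm{Sym}^2 V_0$ for some $2$-dimensional $V_0 \subset V$ (the other maximal linear space of singular symmetric $3\times 3$ matrices, namely $\{v\cdot \ell : \ell \in V^\vee\}$ for fixed $v$, is easily seen to be good). But your exclusion argument for this case is incorrect: you claim that $W \subset \mathrm{Sym}^2 V_0$ ``forces the differential of $\mu_A$ into a proper sub-tangent space of $\sA_g$, contradicting $d_{\mu_A} \ge 3$''. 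There is no contradiction here, since $\dim \mathrm{Sym}^2 V_0 = 3 = d_{\mu_A}$: the differential can perfectly well be injective of rank $3$ with image equal to $\mathrm{Sym}^2 V_0$. Ruling this configuration out is precisely where the non-degeneracy theorem of \cite{A3} (together with the PEM-type classification in $g=3$) is required, and nothing in your sketch replaces it.
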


  Our {\it main application}, in collaboration with Z. Gao, is:
          
             \begin{thm}\label{lbrk}({\rm{with Z. Gao}}) Assume that the abelian scheme $A\to S$ has no non-trivial endomorphism over any finite covering of $S$, and that the section $\xi $ is non-torsion. Assume moreover that $d_{\mu_A}\geq g$. Then $\,{\rm{rk}} \,  \beta \geq 2g$, hence the set of points $s$ such that $\xi(s)$ is a torsion point is dense in $S(\C)$.  
      \end{thm}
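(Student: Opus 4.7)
\emph{Proof plan.} The plan is to derive the theorem from Corollary~\ref{cor2} combined with the hyperbolic Ax--Schanuel theorem for $\sA_g$ and its mixed Shimura refinement. We first check that the hypotheses of Corollary~\ref{cor2} are satisfied. The assumption that $A/S$ has no non-trivial endomorphism over any finite \'etale cover forces $A$ to have no fixed part (projection onto a fixed part would give a non-trivial endomorphism) and forces the geometric generic fibre $A_\eta$ to be simple (any non-trivial isogeny decomposition would yield an idempotent endomorphism). Since $\xi$ is non-torsion and $A_\eta$ is simple, any proper subgroup scheme of $A$ is finite over $S$, hence $\Z\xi$ is Zariski-dense in $A$. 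Corollary~\ref{cor2} then reduces the theorem to producing, after possibly shrinking $S$, a section $\omega \in \Gamma\Omega_A$ for which the contracted Kodaira--Spencer map $\theta_A^\omega\colon T_S \to Lie\,A$ has generic rank $g$.

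Using the principal polarization to view $\theta_A$ as a map $T_S \to \mathrm{Sym}^2(Lie\,A)$ whose image $W_s$ at a generic $s$ has dimension $d_{\mu_A} \geq g$, the existence of such an $\omega$ fails only if, for every $\omega \in \Omega_{A,s}$, the evaluation map $W_s \to Lie\,A_s$, $B \mapsto B\omega$, has rank $< g$. We argue by contradiction: assuming this failure, a semicontinuity argument applied to the rank loci in the total space of $\Omega_A$ (after shrinking $S$ and passing to a finite \'etale cover) produces a nowhere vanishing section $\omega \in \Gamma\Omega_A$ lying in the kernel of $\theta_A$. Equivalently, the line subbundle $\sO_S\omega \subset \Omega_A$ satisfies $\nabla_\partial \omega \in \Omega_A$ for every $\partial \in T_S$, which means geometrically that the linear system of quadrics $|W_s|$ on $\P(\Omega_{A,s})$ has a common singular point at $\omega(s)$.

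The hard part will be the final step, which is Z.~Gao's contribution: to translate this Hodge-theoretic flatness condition into a bi-algebraic statement on $\mu_A(S) \subset \sA_{g,n}$ and to invoke the hyperbolic Ax--Schanuel theorem for the Siegel modular variety (Mok--Pila--Tsimerman), refined to the mixed Shimura setting of $\sX_{g,n} \to \sA_{g,n}$. The flat line $\sO_S\omega$ yields, upon lifting to universal covers, an unlikely analytic intersection between $\tilde\mu_A(\tilde S) \subset \mathfrak{H}_g$ and a proper algebraic subvariety cut out by the flatness relation on $\omega$; Ax--Schanuel then forces $\mu_A(S)$ to be contained in a proper weakly special subvariety $Y \subsetneq \sA_{g,n}$. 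A case analysis invoking the classification of sub-Shimura data of $\mathrm{GSp}_{2g}$, combined with the hypotheses $\mathrm{End}(A_\eta) = \Z$ and $d_{\mu_A} \geq g$, will rule out every such $Y$, yielding the required contradiction and hence $\mathrm{rk}\,\beta \geq 2g$.

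The two main technical difficulties will be (a) formulating the Kodaira--Spencer degeneracy precisely enough to feed it into Gao's Ax--Schanuel in the mixed Shimura framework, and (b) excluding every candidate proper weakly special subvariety of $\sA_{g,n}$ of dimension at least $g$ that is compatible with the triviality of the endomorphism algebra of $A_\eta$.
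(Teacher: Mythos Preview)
Your preliminary reductions (no fixed part, $\Z\xi$ Zariski-dense) are fine, but the central step of your argument contains a genuine gap. You claim that if, for every $\omega\in\Omega_{A,s}$, the evaluation map $W_s\to Lie\,A_s$, $B\mapsto B\omega$, has rank $<g$, then ``a semicontinuity argument'' produces a nowhere-vanishing $\omega$ in the kernel of $\theta_A$, equivalently a common singular point of the web of quadrics $W_s$. This implication is false. Take $g=4$ and $W_s$ the $4$-dimensional space of symmetric forms spanned by $x_0^2,\,x_0x_1,\,x_1^2,\,x_2x_3$. For every $\omega=(a,b,c,d)$ the image $\{B\omega:B\in W_s\}$ is spanned by $(a,0,0,0),(b,a,0,0),(0,b,0,0),(0,0,d,c)$, hence has rank $\le 3<g$; yet the common radical of these four quadrics is $\{0\}$, so no nonzero $\omega$ is annihilated by all of $W_s$. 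Semicontinuity of rank cannot bridge this: the condition ``$\mathrm{rk}(B\mapsto B\omega)<g$ for all $\omega$'' is a Dieudonn\'e-type singular-web condition (cf.\ the paper's Remark~5.1.2), strictly weaker than the existence of a common kernel. Your argument invokes no property of $A/S$ at this step, so it cannot be correct as stated; whether the hypothesis $\mathrm{End}=\Z$ excludes such webs is precisely the hard content, and your plan does not engage with it.

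The paper's route is structurally different. It uses Corollary~\ref{cor1} rather than Corollary~\ref{cor2}: assuming $\mathrm{rk}\,\beta<2g$, one obtains that for \emph{every} $\mu\in\C^g$ and every $\tilde s\in\tilde S$ there is an analytic subvariety through $\tilde s$ of dimension $\ge d_{\mu_A}+1-g$ on which $Z\mu$ is constant. This ``Condition~ACZ'' is a statement purely about the period map $Z$, with the section already eliminated via Theorem~\ref{rk1}; accordingly only the \emph{pure} Ax--Schanuel theorem for $\sA_g$ is needed (no mixed refinement), and Gao's Theorem~\ref{ThmGeomCriterionHodgeGeneric} yields that $\mu_A(S)$ lies in a proper special subvariety of $\sA_g$. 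The contradiction then comes from Theorem~\ref{T8}: using Borovoi's classification of abelian varieties with $\mathrm{End}=\Z$ together with Deligne's list of minuscule representations, one shows \emph{a priori} that $\mathrm{End}=\Z$ and $d_{\mu_A}\ge g$ force the monodromy to be Zariski-dense in $Sp_{2g}$, so $\mu_A(S)$ is Hodge-generic. Your final step (b) is in spirit this same classification, but to reach it you must replace the flawed ``flat line'' mechanism by the analytic foliation of Corollary~\ref{cor1}.
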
 
         
         This is obtained by combining Corollary \ref{cor1}, a classification of the abelian schemes under consideration (\S 8), and the recent theorem of Ax-Schanuel for $\sA_g$ by Mok, Pila, Tsimerman (cf. App. 2).\footnote{In a previous version, an explicit  link of these issues with the  "Ax-Schanuel conjecture" was pointed out.
Very recently this conjecture has been proved, making thus possible to apply it to our problem, as done by Gao in the  appendix.}

 \subsection{} We also settle the {\it hyperelliptic case}:
       let $A\to S$ be the jacobian of the universal hyperelliptic curve of genus $g>0$. Here $S $ is the affine space $\sM_{0, 2g+2}\cong (\mathbb P^1\setminus \{ 0,1,\infty \})^{2g-1}$. By Torelli's theorem, one has  $\dim \, \mu_A(S) = 2g-1$. 
       
       \begin{thm}\label{hyp}  Let   $S$ be a finite cover    of 
       $ (\mathbb P^1\setminus \{0,1, \infty\})^{2g-1}$ and $\xi: S \to A$ be any non-torsion section. Then  $\,{\rm{rk}} \,  \beta \geq 2g$.  \end{thm}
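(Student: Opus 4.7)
The plan is to deduce Theorem~\ref{hyp} directly from the main application Theorem~\ref{lbrk} by verifying its two structural hypotheses for the universal hyperelliptic jacobian $A\to S$: namely (i) $d_{\mu_A}\geq g$, and (ii) $A$ has no non-trivial endomorphism over any finite \'etale covering of $S$.

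Hypothesis (i) is a direct consequence of the Torelli theorem for hyperelliptic curves: the modular map $\mu_A$ is generically finite onto its image, hence $d_{\mu_A}=\dim S=2g-1$, which is $\geq g$ for every $g\geq 1$. For hypothesis (ii) I would argue via the monodromy representation $\rho:\pi_1(S)\to \mathrm{Sp}(2g,\Z)$ on $H_1(A_s,\Z)\cong \Z^{2g}$. By Picard--Lefschetz, each loop around a component of the discriminant locus in a compactification acts through a symplectic transvection along the corresponding vanishing cycle; the classical theorem of A'Campo, together with the description of generators of the hyperelliptic mapping class group due to Humphries, shows that these transvections generate a subgroup of finite index in $\mathrm{Sp}(2g,\Z)$. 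Any finite \'etale cover $S'\to S$ corresponds to a finite-index subgroup of $\pi_1(S)$ whose $\rho$-image is still of finite index in $\mathrm{Sp}(2g,\Z)$, hence Zariski-dense in $\mathrm{Sp}_{2g,\Q}$. Since $\mathrm{Sp}_{2g}$ acts irreducibly on its standard representation, the centralizer of a Zariski-dense subgroup of $\mathrm{Sp}_{2g,\Q}$ inside $M_{2g}(\Q)$ reduces to $\Q\cdot \mathrm{Id}$; whence $\mathrm{End}^0(A\times_S S')=\Q$, and by torsion-freeness $\mathrm{End}(A\times_S S')=\Z$.

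Combined with the hypothesis that $\xi$ is non-torsion, conditions (i) and (ii) permit the application of Theorem~\ref{lbrk}, which then yields $\,{\rm{rk}}\,\beta\geq 2g$. The main obstacle is the justification of the big-monodromy statement in (ii): for $g\geq 2$ it is classical but requires assembling the relevant results on the image of the hyperelliptic mapping class group in $\mathrm{Sp}(2g,\Z)$, while for $g=1$ one recovers the Legendre family, whose monodromy is the principal congruence subgroup $\Gamma(2)\subset \mathrm{SL}_2(\Z)$, still Zariski-dense in $\mathrm{SL}_{2,\Q}$. A secondary point to watch is that passing to the finite cover $S$ of $(\mathbb{P}^1\setminus\{0,1,\infty\})^{2g-1}$ in the statement leaves both hypotheses stable, since a finite cover of a finite cover is a finite cover.
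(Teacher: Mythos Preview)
Your argument is correct, and the paper itself acknowledges this route immediately after stating the theorem: ``While this may be considered as a consequence of the previous theorem [Mori][A'Campo], we offer an elementary proof based on Corollary~\ref{cor2} and an explicit computation of the Kodaira-Spencer map.'' So you have rediscovered exactly the shortcut the authors flag and then deliberately bypass.

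The paper's own proof proceeds quite differently. Rather than invoking Theorem~\ref{lbrk}, it appeals to Corollary~\ref{cor2}: it suffices to exhibit $\omega\in\Gamma\Omega_A$ with $\theta_A^\omega$ of generic rank $g$. Section~7 then computes the Kodaira-Spencer map of the hyperelliptic family explicitly (Proposition~\ref{T.explicitKS}): in suitable bases it is given by a Vandermonde matrix in the branch points $s_0,\ldots,s_{2g-2}$, hence nonsingular, and the surjectivity of $\Gamma(\Omega_{Y_s})^{\otimes 2}\to\Gamma(\Omega_{Y_s}^{\otimes 2})$ then yields the required $\omega$.

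What each approach buys: your route is short and conceptual, but Theorem~\ref{lbrk} rests on Gao's Appendix~II and hence on the Ax--Schanuel theorem for $\sA_g$ (Mok--Pila--Tsimerman), so you are invoking substantially heavier machinery than the problem requires. The paper's route is hands-on and self-contained once Corollary~\ref{cor2} is in place; it also yields the explicit form~\eqref{E.explicitKS} of $\theta_A$, which is of independent interest. Note, incidentally, that the paper's route is not entirely free of your monodromy input either: the hypotheses of Corollary~\ref{cor2} require $\Z\xi$ Zariski-dense, which for a merely non-torsion $\xi$ needs the generic fibre to be simple---and that is precisely what A'Campo's theorem provides.
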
 

While this may be considered as a consequence of the previous theorem \cite{Mo}\cite{aC}, we offer an elementary proof based on Corollary \ref{cor2} and an explicit computation of the Kodaira-Spencer map.

\smallskip As mentioned above, a real version of the hyperelliptic case is of particular interest and is treated in Appendix I (independently of the main body of the paper); it is connected with an issue recently raised  by Serre, who pointed out a missing justification in an  old work of Robinson on transfinite diameters \cite{Rob}. 
We realized that this issue, although concerning only real points, fits into the present context and can be treated considering Betti maps. 
 For a point $s=(s_1,\ldots,s_{2g})\in S=\mathbb P^1\setminus\{0,1,\infty\})^{2g} $ with pairwise distinct coordinates, consider the corresponding genus $g$ hyperelliptic curve
\begin{equation*}
    y^2=x(x-1)(x-s_1)\cdots (x-s_{2g}).
\end{equation*}
 In a smooth compactification, it has two points at infinity, denoted as $\infty^+,\infty^-$. Denoting again by $A\to S$ the jacobian scheme of the curve defined by the above equation, let us consider the section $\xi:S\to A$ associating to each $s\in S$ the class of the divisor $\infty^+ - \infty^-$. 
Both the abelian scheme $A\to S$ and the section $\xi:S\to A$ are defined   over the reals, so in particular for each real point $s\in S(\R)$ the value $\xi(s)$ is a real point of the abelian variety $A_s$ (which is defined over $\R$). The tangent space $Lie A$ also inherits a real structure, and we can define the real version of the Betti map as a map $\beta_\R: \tilde{S}(\R)\to \R^g$.  We prove that $\beta_\R$ is generically a surjection, thus filling the gaps related to Serre's question.

   \section{The Betti map.}
   
      \subsection{Betti coordinates on an abelian variety}\label{bet} Let us first clarify the ``constant case", \ie when $S$ is a point. Given a complex abelian variety $A$, we identify its {\it period lattice} $\mathcal L := \,H_1(A(\C), \Z)$ with the kernel of the exponential map   \begin{equation} \exp_A: Lie\, A \to A.  \end{equation} 
    Given $\xi\in A(\C)$, we denote by $\mathcal L_\xi$ the lattice in $(\mathcal L\otimes_\Z \R) \oplus\R$ consisting of pairs $(\ell \in Lie\, A, m \in \Z) $ such that $exp_A (\ell) = m\xi\,$; it sits in an extension 
   \begin{equation} 
   0\to \mathcal L \to \mathcal L_\xi \stackrel{(\ell, m)\mapsto m\xi}{\to} \Z\xi \to 0.
   \end{equation}
   We fix a splitting
    $\lambda: \Z\xi \to \mathcal L_\xi\,$ (\ie a branch of the ``abelian logarithm"). 
   On the other hand, let us consider the first projection $\,\mathcal L_\xi \stackrel{(\ell, m)\mapsto \ell}{\to}  Lie\, A$ and its $\R$-linear extension $\mathcal L_\xi\otimes_\Z \R \to Lie\, A\,$. Since the composed map 
    \begin{equation}\label{expo} \mathcal L\otimes_\Z \R \to  \mathcal L_\xi\otimes_\Z \R \to Lie\, A \end{equation} is an isomorphism, we get a retraction  
       \begin{equation}
{\mathcal L_\xi\otimes_\Z \R \to   \mathcal L\otimes_\Z \R},    
       \end{equation}
        and the image of $\lambda(\xi)\otimes 1$ in $ \mathcal L\otimes_\Z \R$ can be expressed, in terms of a basis $\underline \gamma$ of $\mathcal L$, by $2g$ real coordinates, the {\it Betti coordinates}
      of $\xi\,$\footnote{This terminology, due to D. Bertrand, refers to the fact that these are indeed real-analytic {\it coordinates} on any simply-connected domain in $A(\C)$, and that $\mathcal L_\xi$ is nothing but the {\it Betti realization of the $1$-motive} $[\Z\stackrel{1\mapsto \xi}{\to} A]$ attached to $(A, \xi)$ in the sense of \cite[X]{D1}; it is however slightly abusive, since these coordinates also depend on the choice of a basis $(\lambda(\xi), \underline \gamma)$ of $\mathcal L_\xi$.}. 
    
    In terms of abelian integrals, the Betti coordinates of $\xi$ are nothing but the $2g$ {\it real} solutions $\beta_i$ of the system of $g$ inhomogeneous linear equations with {\it complex} coefficients   \begin{equation}\label{int} \displaystyle  \int^\xi \omega_j  = \sum_{i=1}^{2g} \beta_i  \int_{\gamma_{i }} \omega_j, \;\, j= 1, \ldots g\, . \end{equation}

  \subsection{The Betti map attached to a section of an abelian scheme} As explained above, we are interested in the relative setting, that is, in the {\it variation} of the Betti coordinates in the context of a family of abelian varieties endowed with a section. 
  
  Let $S$ be a smooth connected complex algebraic variety, and let $A\stackrel{f}{\to} S$ be an abelian scheme of relative dimension $g$. Its Lie algebra $Lie\, A$ is a rank $g$ vector bundle on $S$.

Let $\xi: S\to A$ be a section of $f$. 
The above constructions extend as follows. Let $\tilde S$ be a universal covering of $S(\C)$, with its canonical structure of complex analytic manifold. 
The kernel of
$ \exp_A$ is a locally constant sheaf on $S(\C)$, which can be viewed as a constant lattice $\mathcal L$ on $\tilde S$.
Similarly, one constructs an exact sequence of lattices on $\tilde S$
    \begin{equation} 0\to \mathcal L \to \mathcal L_\xi \stackrel{(\ell, m)\mapsto m}{\to} \Z\xi \to 0 \end{equation} and one fixes a splitting $\lambda: \Z\xi \to \mathcal L_\xi$.

      Let $\tilde S^{real}$ be the real-analytic manifold underlying $\tilde S$, and let $\sO^{\R}_{{\tilde S}^{real}}$  (\resp $\sO_{{\tilde S}^{real}}$)  be the sheaf of real-valued (\resp complex valued) real-analytic functions on ${\tilde S}$.
  The relative version of \eqref{expo} states that the composed map  
   \begin{equation} \mathcal L\otimes_\Z\sO^{\R}_{{\tilde S}^{real}}\to  \mathcal L_\xi\otimes_\Z\sO^{\R}_{{\tilde S}^{real}} \to   Lie\, A\otimes_{ O_S} \sO_{{\tilde S}^{real}} 
   \end{equation} 
is an isomorphism, whence a canonical retraction  $\mathcal L_\xi\otimes_\Z \sO^{\R}_{{\tilde S}^{real}} \to   \mathcal L\otimes_\Z \sO^{\R}_{{\tilde S}^{real}}$, and the image of $\lambda(\xi)\otimes 1$ as a global section of $ \mathcal L\otimes_\Z \sO^{\R}_{{\tilde S}^{real}}$ 
   can be expressed, in terms of a basis $\underline \gamma$ of the lattice $\mathcal L$, by a real-analytic map 
  \begin{equation}\beta = \beta_{\lambda, \underline \gamma} : \;\; \tilde S^{real}  \to \R^{2g}, 
  \end{equation}  
  which we call the {\it Betti map} associated to $(A,\xi)$, or to the smooth 1-motive $[\Z \stackrel{1\mapsto \xi}{\to} A]\,$ (the terminology is slightly abusive, since $\beta$ also depends on the choice of a basis $(\lambda(\xi), \underline \gamma)$ of $\mathcal L_\xi$). It is compatible with pull-back of $(A, \xi)$ by any morphism $S'\to S$, with $S'$ smooth connected.

     The interpretation of $\beta$ in terms of abelian integrals is the same as in \eqref{int} (``real" and ``complex" being understood as ``real-valued" and ``complex-valued" respectively), and shows that the fibers of $\beta$ are complex-analytic subvarieties of $\tilde S$ (\cf \ref{beg}). 

     The non-empty pre-images of the Betti map are complex subvarieties of dimension  $\geq \dim S-\frac{1}{2}{\rm{rk}} \, \beta\,$ (\cf \cite[prop. 2.2]{CMZ}).

  \section{From Betti to Kodaira-Spencer.  }
  
  In this section, we relate the derivative of the Betti map $\beta$ at an arbitrary point $\tilde s \in \tilde S$ to the corresponding value $\theta_\xi^\varpi(s)$ of the contracted Kodaira-Spencer map for some {\it specific} parameter $\varpi$ (depending on $\tilde s$). Although we work, as above, over a (smooth) complex algebraic variety $S$ for convenience, the metamorphoses of the derivative of the Betti map which we display do not actually require the algebraicity of $S$.  
  
 \subsection{Setting and notation}\label{sn} Let $A\to S$ be an abelian scheme of relative dimension $g$ over a smooth connected algebraic $\C$-variety of dimension $d$. For any holomorphic function $g$ on the universal covering $\tilde S$ of $S$, we denote by $\bar g$ the conjugate antiholomorphic function. 
 
    The problems studied in this paper are Zariski-local on $S$, and insensitive to  replacing $A$ by an isogenous abelian scheme. Therefore we may and shall assume henceforth that 
 
\smallskip $i)$ $S$ is affine and admits a system of global (etale) coordinates $(z_1, \ldots, z_d)$; we take $(\partial_1= \frac{\partial}{\partial z_1}, \ldots, \partial_d= \frac{\partial}{\partial z_d})$ as a basis of tangent vector fields to $S$, also viewed as a basis of holomorphic derivations of $\sO(\tilde S)$,
 
 $ii)$ the vector bundle $Lie\,A$ is trivial, \ie  $\Gamma Lie\, A $ is a free $\Gamma \sO_S$-module of rank $g\,$; its dual is canonically isomorphic to the sheaf $\varOmega_A$ of invariant $1$-forms (a.k.a. differentials of the first kind),
 
 $iii)$ the vector bundle  $\sH^1_{dR} (A/S)$ is trivial,
 
 $iv)$ $A$ is principally polarized.

\smallskip We fix a basis $ (\omega_1,\ldots , \omega_g)$ of global sections of $\varOmega_A\cong  Lie\, A $, and complete it into a symplectic basis $(\omega_1,\ldots , \omega_g, \eta_1,\ldots , \eta_g)$ of $\sH^1_{dR} (A/S)$ (which carries the symplectic form coming from the polarization).

We fix a symplectic basis $(\gamma_1,\ldots , \gamma_{2g})$ of $\mathcal L$.

  We set  $\,\Omega_1 := \begin{pmatrix} \int_{\gamma_i} \omega_j\end{pmatrix}_{i, j= 1,\ldots g}, \; \Omega_2 := \begin{pmatrix} \int_{\gamma_{i+g}} \omega_j\end{pmatrix}_{i, j= 1,\ldots g},  \Omega := \begin{pmatrix} \Omega_1\\ \Omega_2\end{pmatrix}, \;$
    
    The entries of  $\Omega$ are holomorphic on $\tilde S$, and the  $g\times g$-matrices $\Omega_1, \Omega_2$ and the $2g\times 2g$-matrix $\begin{pmatrix} \Omega , \overline\Omega \end{pmatrix}$ are invertible at every point ${\tilde s}\in \tilde S$. 
     
 \smallskip Finally, we fix a section $\xi$ of $A/S$ and a determination $\lambda$ of the abelian logarithm as in \ref{bet}. We may and shall assume that 
 
 $v)$ the extension $\sH^1_{dR}([\Z \stackrel{1\mapsto \xi}{\to} A]/S)$ of $\sH^1_{dR} (A/S)$ by $\sO_S$ is trivial.

We set   $ \Lambda :=  \begin{pmatrix} \int^\xi \omega_j \end{pmatrix}_{j= 1,\ldots g}   $, a row of $g$ holomorphic functions on $\tilde S$. 
       
  \subsection{From real-analytic to holomorphic maps}\label{beg}  We write the Betti map $\beta= \beta_{\lambda, \underline\gamma}$ of the 1-motive $[\Z \stackrel{1\mapsto \xi}{\to} A]$ (in the basis $(\lambda(\xi), \underline \gamma)$ of $\mathcal L_\xi$) as a {\it row} with $2g$ entries: by definition, $\lambda(\xi) \equiv \sum_1^{2g} \beta_i \gamma_i$ in $\mathcal L_\R$, so that      
    \begin{equation}\label{e1} \Lambda = \beta \Omega  .\end{equation} 
     In particular, {\it the fibers $\beta^{-1}(b)$ of $\beta$ are the complex-analytic subvarieties of $\tilde S$} defined by $\Lambda({\tilde s}) = b \Omega({\tilde s})    $. 
  
  From \eqref{e1}, one gets
 $\begin{pmatrix} \Lambda , \overline\Lambda \end{pmatrix} = \beta \begin{pmatrix} \Omega , \overline\Omega \end{pmatrix} $ since $\beta$ is real-valued, whence 
     \begin{equation}\label{e2} \beta = \begin{pmatrix} \Lambda , \overline\Lambda \end{pmatrix} \begin{pmatrix} \Omega , \overline\Omega \end{pmatrix}^{-1}.  \end{equation}
    In particular, the entries of
     $$ \det\begin{pmatrix} \Omega , \overline\Omega \end{pmatrix} \cdot \beta$$ are polynomials in holomorphic and antiholomorphic functions on $\tilde S$.
     
     \smallskip
     It will be convenient to build the Jacobian matrix $J = J_{ \lambda, \underline\gamma } $ of the Betti map using the system of derivations $(\partial_1, \ldots, \partial_d, \bar\partial_1=\frac{\partial}{\partial \bar z_1}, \ldots, \bar\partial_d= \frac{\partial}{\partial \bar z_d})$:  
\begin{equation}\label{e3} J = \begin{pmatrix} \partial_1 \beta_1 & \ldots & \partial_1 \beta_{2g}\\ \vdots& \, & \vdots\\  \bar\partial_d \beta_1 & \ldots & \bar\partial_d \beta_{2g} \end{pmatrix}. \end{equation}  

Differentiating \eqref{e2}, one gets 
     \begin{equation}\label{e4} \partial_i\beta \begin{pmatrix} \Omega , \overline\Omega \end{pmatrix} =  \begin{pmatrix} \partial_i \Lambda , 0 \end{pmatrix} - \beta \begin{pmatrix}   \partial_i\Omega , 0 \end{pmatrix} , \;\; \bar\partial_i\beta \begin{pmatrix} \Omega , \overline\Omega \end{pmatrix}=  \begin{pmatrix} 0, \bar\partial_i\overline \Lambda   \end{pmatrix} - \beta \begin{pmatrix} 0,   \bar\partial_i\overline\Omega  \end{pmatrix}.   \end{equation}  
     For any $\nu\in \C^{2g}$, let $I_\nu$ be the $d\times g$-matrix with holomorphic entries    
        \begin{equation}\label{e5}\displaystyle (I_\nu)_{ij} :=  \partial_i \Lambda_j   +  \sum_k \nu_k   \partial_i\Omega_{kj}.      \end{equation} 
Combining equations \eqref{e4} for $i= 1, \ldots, d$, one gets
           \begin{equation}\label{e6}  J({\tilde s}) \cdot \begin{pmatrix}  \Omega , \overline\Omega \end{pmatrix}({\tilde s}) =
         \begin{pmatrix}   I_{-\beta({\tilde s})}  &  0 \\ 
         0& \overline I_{-\beta({\tilde s})}\end{pmatrix}({\tilde s}), \end{equation}
 so that for every $s\in \tilde S$, 
   \begin{equation}\label{e7} {\rm{rk}}\,J({\tilde s})= {\rm{rk}}\, J({\tilde s})\begin{pmatrix} \Omega , \overline\Omega \end{pmatrix}({\tilde s}) = 2\, {\rm{rk}}\,I_{-\beta({\tilde s})}({\tilde s}). \end{equation} In particular, ${\rm{rk}}\,J({\tilde s})$ {\it is even}.
 
\begin{rems}\label{remI} $(1)$ Just like  $ \det\begin{pmatrix} \Omega , \overline\Omega \end{pmatrix} \cdot \beta$, the entries of 
$$ \det\begin{pmatrix} \Omega , \overline\Omega \end{pmatrix}  I_{-\beta} $$ are polynomials in holomorphic and antiholomorphic functions on $\tilde S$. 

\smallskip\noindent $2)$ On the other hand, the entries of $ I_{-\beta} $ are monodromy-invariant (hence real-analytic functions on $S$). Indeed, there is a natural action of the deck transformation group $\Delta := {\rm{Aut}} (\tilde S/S)$ on $\mathcal L_\xi$ which preserves $\mathcal L$, and for any $\delta\in \Delta$,  one can write $\delta \underline \gamma = M_\delta \underline \gamma$ and $\displaystyle \delta\lambda(\xi) =   \lambda(\xi)  + \sum_k \nu_{\delta, k} \gamma_k$ (where the entries of $M_\delta$ and $\nu_\delta$ are integers), whence (by integration of holomorphic differentials along these cycles) 
      \begin{equation}\label{e9'}  \Omega(\delta {\tilde s})  = M_\delta \Omega({\tilde s}), \;\;  \Lambda(\delta {\tilde s})  =  \Lambda({\tilde s})  + \nu_\delta \Omega({\tilde s}).\end{equation}  
  Using \eqref{e2}, one gets
   $$(3.8)' \;\;\;       \beta_{\lambda, \underline \gamma}({\tilde s}) + \nu =   (\beta_{\delta\lambda, \delta\underline \gamma}(\delta {\tilde s})) M ,$$
 whence  
 $$(3.9)' \;\;\;   J_{\lambda, \underline \gamma}({\tilde s})  = J_{\delta\lambda, \delta\underline \gamma}(\delta {\tilde s}) M ,$$ and further 
 $$(3.11)'  \;\;\;    I_{-\beta({\tilde s})}(\delta {\tilde s}) = I_{-\beta({\tilde s}) }({\tilde s}) .$$ 
This shows that {\it $rk\, \beta(\tilde s)$ depends only on the point $s\in S$ under $\tilde s$. In fact the same calculation shows that it does not even depend on the auxiliary choice of $(\lambda, \underline{\gamma})$}.     
 \end{rems}

    \subsection{Going to the Siegel space $ \frak H_g$}\label{sub-sectionSiegel} We set  
  \begin{equation*}
    Z   := \Omega_2\cdot \Omega_1^{-1}, \; \;   L := \Lambda\cdot \Omega_1^{-1} 
    \end{equation*}
  and note that $Z$ takes values in
  $ \frak H_g$, \ie $Z$ is symmetric and $\rm{Im} \, Z >0$.
    
 From $ \Lambda = \beta \Omega $, one gets          \begin{equation}\label{e12} \beta = \begin{pmatrix} L , \overline L \end{pmatrix} \begin{pmatrix}   I& I \\ Z & \overline Z\end{pmatrix}^{-1},  \end{equation} 
    (where $I$ stands for the $g\times g$ identity matrix), whence 
           \begin{equation}\label{e12''}  \beta_1   + \beta_2 Z = L  \end{equation} 
  and  $\beta = (\beta_1, \beta_2)$ with     \begin{equation}\label{e12'}\beta_2 =  \Im L (\Im Z)^{-1}, \;\beta_2 =   \frac{1}{2} (- L(\Im Z)^{-1}\bar Z + \bar L  (\Im Z)^{-1} Z )  \end{equation} (where $\Im$ stands for the imaginary part).

   \smallskip   For any $\mu\in \C^{g}$, let $H_\mu$ be the $d\times g$-matrix with holomorphic entries   
         \begin{equation}\label{e13}\displaystyle (H_\mu)_{ij} := \partial_i L_j  +  \sum_k \mu_k  \partial_iZ_{kj}.     \end{equation} 
     On denoting by ${\beta({\tilde s})_1} $ the first half of the row $ \beta({\tilde s})$,  one draws as above (\cf \eqref{e3} \eqref{e6}):    
     \begin{equation}\label{e14} 
     J({\tilde s}) \cdot  \begin{pmatrix}  I& I\\ Z & \overline Z\end{pmatrix}({\tilde s}) =
         \begin{pmatrix}   H_{-\beta({\tilde s})_2}  &  0 \\ 
         0& \overline H_{-\beta({\tilde s})_2}\end{pmatrix}({\tilde s}). 
   \end{equation}
          
          A straightforward calculation shows that    \begin{equation}\label{e18}\displaystyle  \partial  \Lambda + \nu \partial  \Omega = ( \partial  L + \nu_2  \partial  Z)\Omega_1 + (L +\nu_1   +   \nu_2 Z)\partial  \Omega_1,
       \end{equation}  
     so that
  \begin{equation}\label{e18'} H_{\nu_2}({\tilde s}) \cdot \Omega_1({\tilde s}) = I_\nu({\tilde s})  \end{equation}
   if  $\nu = (\nu_1 =  - L(\tilde s) - \nu_2 Z(\tilde s) ,   \, \nu_2)$, \ie if $\,\nu_1\Omega_1 + \nu_2\Omega_2 = -\Lambda$.
   
    For $\nu = -\beta(\tilde s)$, one recovers \eqref{e14} from \eqref{e6} using \eqref{e12''}; more generally, 
    \begin{equation}\label{e18''}\displaystyle \max_{\nu\in \C^{2g}} {\rm{rk}}\,I_\nu({\tilde s})  \geq   \max_{\mu\in \C^{g}} {\rm{rk}}\, H_\mu({\tilde s}) . \end{equation}

     \subsection{From holomorphic functions to rational functions}\label{fhtr}  
   We set 
     $\; M :=  \begin{pmatrix} \int^\xi \eta_j \end{pmatrix}_{j= 1,\ldots g} . $
     The $(1+2g )\times(1+2g)$-matrix $Y_\xi := \begin{pmatrix} 1&\Lambda & M\\ 0 &\Omega_1&N_1 \\ 0 & \Omega_2 & N_2 \end{pmatrix}$  is a full solution of the Gauss-Manin connection  attached to the family of 1-motives $[\Z \stackrel{1\mapsto \xi}{\to} A]$, which has the form\footnote{as in \cite{A3}, we write this differential system in a slightly non-standard way, with the matrix of the connection on the right so that the monodromy acts on the left.  $\ell_\partial$ is essentially the row of rational functions which occur as second member of the inhomogeneous scalar differential operator in Manin's kernel theorem.}
     \begin{equation}\label{e19} \partial Y_\xi = Y_\xi \begin{pmatrix} 0&\ell_\partial & m_\partial\\ 0 &R_\partial &S_\partial  \\ 0 & T_\partial & U_\partial   \end{pmatrix},  \end{equation} where the entries of the last displayed matrix lie in $\sO(S)$, and depend linearly on the tangent vector field $\partial \in \Gamma T_S$.      
  
 The advantage of going to the Siegel space, \ie of considering $H_\mu $ rather than $I_\nu$, appears in the following calculations based on \eqref{e19}: 
   \begin{equation}\label{e20} (\partial L)\cdot \Omega_1 = \ell_\partial +  (M   - \Lambda N ) T_\partial \end{equation}
 while 
  \begin{equation}\label{e21}  (\partial Z) \cdot \Omega_1  =  \partial \Omega_2 -\Omega_2\Omega_1^{-1}\partial \Omega_1     = 
       (  \Omega_2R_\partial + N_2 T_\partial) - \Omega_2\Omega_1^{-1} (\Omega_1 R_\partial + N_1 T_\partial )    \end{equation}
    $ =
      (N_2 -  \Omega_1\Omega_1^{-1} N_1)T_\partial  =
          {}^t\Omega_1^{-1} ({}^t\Omega_1 N_2 - {}^t\Omega_2 N_1 ) T_\partial   =  2\pi i    {}^t\Omega_1^{-1} T_\partial    $

          \noindent (using the symmetry of $Z$ and, at the end, the analogue of the Legendre relation for abelian varieties).
          
   \smallskip Let $s$ be the image of $\tilde s $ in $S$. For any $\mu\in \C^{g}$, let $G_\mu$ be the $d\times g$-matrix with rational entries  
        \begin{equation}\label{e22}\displaystyle (G_\mu)_{ij} := (\ell_{\partial_i})_j   +  \sum_k \mu_k  (T_{ \partial_i})_{kj}\in \sO(S).     \end{equation}
           
   It follows from \eqref{e20} and \eqref{e21} that
           \begin{equation}\label{e23}\displaystyle  H_\mu  \Omega_2  =   G_{M  - \Lambda N  +2\pi i  \mu\, {}^t\Omega_1^{-1} }  ,\end{equation} 
     and since for fixed $\tilde s$, $\mu \mapsto  M(\tilde s)   - \Lambda({\tilde s})N(\tilde s) +2\pi i  \mu\, {}^t\Omega_1^{-1}({\tilde s})$ is an affine automorphism of $\C^g$, one concludes that
       \begin{equation}\label{e24} \max_{\mu\in \C^{g}} {\rm{rk}}\, H_\mu({\tilde s}) =     \max_{\mu\in \C^{g}} {\rm{rk}}\, G_\mu(s) .\end{equation}

     \subsection{$G_\mu$ and the Kodaira-Spencer map}\label{KS} Recall that for any variation $\underline\sV = (\sV, \sF^i, \nabla)$ of (mixed) Hodge structures on $S$, Griffiths transversality $\nabla(\sF^i)\subset \sF^{i-1}$ shows that the (Gauss-Manin) connection $\nabla$ induces $\sO_S$-linear maps (the Kodaira-Spencer maps) $\theta^i: T_S\otimes gr^i \to gr^{i-1}$, where $gr^i = \sF^i/\sF^{i+1}$ are the graded pieces of the Hodge filtration (\cf \cite[1.3]{K}). In case $\underline\sV$ comes from an abelian scheme or a $1$-motive over $S$, the Hodge filtration has only two steps: $gr^1$ and $gr^0$, so that there is only one Kodaira-Spencer map (\cf also \cite[1.4]{A3}).
     
 Let us consider the family of $1$-motives  $[\Z \stackrel{1\mapsto \xi}{\to} A]  $ defined by $(A, \xi)$ \cite[X]{D1}: in this case, $\sV = \sH^1_{dR} ([\Z \stackrel{1\mapsto \xi}{\to} A] /S) $ is a vector bundle of rank $2g+1$, and $gr^1=\varOmega_A$.  The associated Kodaira-Spencer map is a morphism of vector bundles
     \begin{equation}\label{e25}\theta_\xi : \, T_S \otimes \varOmega_A \to gr^0,\end{equation} 
whose composition with the canonical projection $gr^0 \to Lie\, A$ is the usual Kodaira-Spencer map of $A$:
     \begin{equation}\label{e26}\theta_A  : \, T_S \otimes \varOmega_A \to   \varOmega_A^\vee = Lie\, A . \end{equation} 
     
     Conditions $iii), iv), v)$ above imply that the Hodge-theoretic graded piece $$gr^0 := gr^0\sH^1_{dR} ([\Z \stackrel{1\mapsto \xi}{\to} A] /S)$$ is a trivial extension of $Lie\, A$ by $\sO_S\,$. We denote by $\omega_0$ a lifting of $1  $ in $(gr^0)^\vee$, so that $(\omega_0, \omega_1, \ldots, \omega_g)$ form a basis of sections of $(gr^0)^\vee$.

  Using the basis $(\omega_1, \ldots, \omega_g)$ and the dual basis (and Notation \eqref{e19}), it turns out that the matrix of the contracted map $\theta_{A,\partial }: \varOmega_A \to   \varOmega_A^\vee$ is $T_{\partial }$ (which is a symmetric matrix), \cf \cite[1.4]{A3}. Similarly, the matrix of the contracted map $\theta_{\xi,\partial }:  \varOmega_A \to gr^0$ is $\begin{pmatrix} \ell_{\partial } \\ T_{\partial } \end{pmatrix}.$
   It follows that $G_\mu$ is the matrix of the contracted map 
  \begin{equation}\label{e27}
  \theta_\xi^{\varpi}:\, T_S \to   \varOmega_A^\vee = Lie\, A, \;\;\; \varpi = \sum_{k= 0}^g \mu_k \omega_k\in (gr^0)^\vee, \; \mu_0= 1 .
  \end{equation}
 Combining \eqref{e6}, \eqref{e18'} and \eqref{e23}, one finally gets
           \begin{equation}\label{e30}  J({\tilde s}) \cdot \begin{pmatrix}  \Omega , \overline\Omega \end{pmatrix}({\tilde s}) =
         \begin{pmatrix}    \theta_\xi^{\varpi(\tilde s)}(s)  &  0 \\ 
         0& \overline  \theta_\xi^{\varpi(\tilde s)}(s)\end{pmatrix}  \end{equation}
with $\varpi(\tilde s) = \omega_0 +  \sum_{k= 1}^g (M   - \Lambda N  + 2\pi i  \beta_2 \, {}^t\Omega_1^{-1})_k({\tilde s})\cdot \omega_k ,$ 
and  
 $\,\beta_2$ given by \eqref{e12'}; whence 
 \begin{equation}\label{e30'}  {\rm{rk}}\,J(\tilde s) \leq 2 \max_{\varpi} {\rm{rk}}\,  \theta_\xi^{\varpi}(s). \end{equation} 
 Combining  \eqref{e7}, \eqref{e18''} and \eqref{e24}, one also gets
  \begin{equation}\label{e31} \displaystyle    \max_{\varpi} {\rm{rk}}\,  \theta_\xi^{\varpi}(s) \leq  \max_{\nu\in \C^{2g}} {\rm{rk}}\,I_\nu({\tilde s}) .   \end{equation}

   \section{Generic rank of the Betti map.}

 In this section, we show that the generic rank of the (derivative of the) Betti map is twice the generic rank of the contracted Kodaira-Spencer map $\theta_\xi^\varpi$ for a {\it generic} value of the parameter $\varpi$. Here, the algebraicity of $S$ is essential, as well as the assumptions of Th. \ref{rk1}:  
 
\smallskip $(*)\;\;\;$ {\it $\Z \xi$ is Zariski-dense in $A$, and  $A$ has no fixed part}. 
  
  \subsection{A strengthened form of Theorem \ref{rk2}.}
  
  \begin{thm}\label{rk+} Under $(*)$,     
  \begin{equation}\label{e32} 
      \displaystyle\,{\rm{rk}} \, J \,  = 2\cdot \,\max_{\nu \in \C^{2g}} {\rm{rk}} \,I_{\nu}. 
      \end{equation}   
   \end{thm}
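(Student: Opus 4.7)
One direction is immediate from \eqref{e7}, which gives $\mathrm{rk}\,J(\tilde s)=2\,\mathrm{rk}\,I_{-\beta(\tilde s)}(\tilde s)$ at every point and hence $\mathrm{rk}\,J\le 2\max_{\nu,\tilde s}\mathrm{rk}\,I_\nu(\tilde s)$. The content is the reverse inequality. Setting $r_0:=\tfrac{1}{2}\mathrm{rk}\,J$, we must show that every $(r_0+1)\times(r_0+1)$ minor $m(\nu,\tilde s)$ of $I_\nu$, viewed as a polynomial in $\nu$ with coefficients in $\sO(\tilde S)$, vanishes identically on $\C^{2g}\times\tilde S$; what \eqref{e7} and the definition of $r_0$ give us is only the weaker statement $m(-\beta(\tilde s),\tilde s)\equiv 0$ on $\tilde S$.

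The obstacle, highlighted in Remark \ref{remI}(2), is that $\tilde s\mapsto I_{-\beta(\tilde s)}(\tilde s)$ descends to $S$, so naive $\pi_1(S)$-monodromy on $\tilde S$ produces no new identities. The plan, as sketched in the introduction, is to exploit Remark \ref{remI}(1): for $N$ large enough, $\det(\Omega,\overline\Omega)^N\cdot m(-\beta(\tilde s),\tilde s)$ admits an explicit expression as a polynomial $P$ in the holomorphic functions (entries of $\Omega,\Lambda$ and their derivatives $\partial_i\Omega,\partial_i\Lambda$) and their complex conjugates. Viewing $P$ as an element of $\sO(\tilde S)\otimes_\C\bar\sO(\tilde S)$, the restriction to the diagonal is zero, but on the tensor product itself two \emph{independent} copies of $\pi_1(S)$ act, one on each factor, and this upgrades the toolkit available for generating new identities.

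Apply an element $\delta\in\pi_1(S)$ to the holomorphic factor alone: by \eqref{e9'}, $(\Omega,\Lambda)$ is replaced by $(M_\delta\Omega,\Lambda+\nu_\delta\Omega)$, with corresponding formulas for the derivatives, while the antiholomorphic variables are left untouched. Since the vanishing of $P$ is a polynomial identity in its arguments, it persists under this twisted substitution; unwinding, one obtains an identity of the form $m(-\beta(\tilde s)+\nu'_\delta,\tilde s)\equiv 0$ in which $\nu'_\delta\in\C^{2g}$ depends linearly on $(\nu_\delta,M_\delta)$ and on the antiholomorphic data at $\tilde s$. The decisive input is now Andr\'e's relative monodromy theorem \cite[Th.~3]{A1}: under the hypotheses $(*)$ (no fixed part, $\Z\xi$ Zariski-dense in $A$) it forces the shifts $\nu'_\delta$, combined with the symmetric action on the antiholomorphic side, to fill a Zariski-dense subset of $\C^{2g}$ at each $\tilde s$. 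Hence $m(\cdot,\tilde s)\equiv 0$ as a polynomial on $\C^{2g}$ for every $\tilde s$, which is what we needed. The principal obstacle in executing the plan is the bookkeeping that links the twisted substitution in $P$ to a genuine shift in the argument of $m$, together with the precise invocation of Andr\'e's theorem to guarantee Zariski-density of the monodromy orbit; this is exactly the step where $(*)$ is indispensable.
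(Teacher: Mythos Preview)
Your plan follows the paper's strategy closely, but there is one genuine gap. You write that ``the restriction to the diagonal is zero, but on the tensor product itself two independent copies of $\pi_1(S)$ act,'' and then that ``since the vanishing of $P$ is a polynomial identity in its arguments, it persists under this twisted substitution.'' Neither sentence is justified as stated. The vanishing of $P$ is \emph{not} a polynomial identity: $P$ is a genuine nonzero polynomial, and what vanishes is its value on the specific transcendental data $(\Lambda(\tilde s),\partial\Lambda(\tilde s),\Omega(\tilde s),\partial\Omega(\tilde s),\overline{\Lambda(\tilde s)},\ldots)$. If all you know is that $\iota(P)=0$ (vanishing on the diagonal), then acting by $(\delta,1)$ on $\sO(\tilde S)\otimes\bar\sO(\tilde S)$ sends $P$ to an element whose diagonal restriction is $\tilde s\mapsto P(\text{holo data}(\delta\tilde s),\overline{\text{holo data}(\tilde s)})$, and there is no reason this should vanish: the holomorphic and antiholomorphic arguments are now taken at \emph{different} points, and the original hypothesis says nothing about that.

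The missing ingredient is exactly the lemma proved in \S\ref{sepv}: the diagonal map $\iota:\sO(\tilde S)\otimes\bar\sO(\tilde S)\to\sO(\tilde S^{real})$ is \emph{injective}. This upgrades ``vanishing on the diagonal'' to the two-variable identity \eqref{e35}, i.e.\ $P(\text{holo data}(\tilde s),\overline{\text{holo data}(\tilde s')})=0$ for \emph{all} pairs $(\tilde s,\tilde s')$. Only then does it make sense to freeze $\tilde s'$ and apply monodromy in $\tilde s$ (and vice versa), and only then does Andr\'e's theorem enter: it says that the elements of $\pi_1(S)$ with $M_\delta=I$ have $\nu_\delta$ Zariski-dense in $\C^{2g}$, so one may replace $\Lambda$ by $\Lambda+\nu\Omega$ for arbitrary $\nu$ while keeping $\Omega$ fixed. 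One further subtlety you glossed over: acting on the holomorphic factor alone yields a shift $\breve\nu$ ranging over only a $g$-dimensional subspace of $\C^{2g}$ (see the computation preceding \eqref{e36}); one must act on the antiholomorphic factor as well to fill all of $\C^{2g}$. Your final clause (``combined with the symmetric action on the antiholomorphic side'') gestures at this, but the precise dimension count is what makes the argument close.
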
 
Here, the ranks are the {\it generic} ranks.  
This implies the formula $\displaystyle{\rm{rk}} \, J \,= 2  \max_{\varpi} {\rm{rk}}\,  \theta_\xi^{\varpi}$ of Theorem \ref{rk2}, due to \eqref{e30'} and \eqref{e31}.  This also implies Theorem \ref{rk1}, thanks to \eqref{e14} (or \eqref{e18''}).

 Inequality $\leq$ in \eqref{e32} follows from \eqref{e7}. 
 If ${\rm{rk}} \,  \beta$ takes its maximal possible value $2\min (d,g)$, we have equality since ${\rm{rk}} \,I_{\nu}\leq \min (d,g)$. Therefore we may and shall assume that 
 
\smallskip $(**)\;\;\;$ $r:= {\rm{rk}} \, J  <  2\min (d,g)$.
  
 Equivalently (by \eqref{e7}): {\it every minor $\sM_{-\beta}$ of order $r+1$ in $I_{-\beta}$ vanishes identically on $\tilde S$}. We have to show that every minor $\sM_\nu$ of order $r+1$ in $I_{\nu}$ vanishes identically on $\tilde S$ for every $\nu \in \C^{2g}$.

    \subsection{Separation of the holomorphic and antiholomorphic variables.}\label{sepv}  By Remark \ref{remI} $(1)$,  $\det (\Omega, \overline\Omega)^r \sM_{-\beta}$ is a polynomial in holomorphic functions and antiholomorphic functions on $\tilde S$, \ie lies in the image of the ``diagonal" ring homomorphism 
    \begin{equation}\label{e34} \iota: \sO(\tilde S) \otimes \bar\sO(\tilde S) \to \sO(\tilde S^{real}), \;\; f(z)\bar g(w)\mapsto  f(z)\bar g(z).\end{equation}  
     \begin{lemma} $\iota$ is injective.
     \end{lemma}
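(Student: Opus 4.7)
The plan is to factor $\iota$ through the ring of holomorphic functions on the product complex manifold $\tilde S \times \tilde S^{\mathrm{conj}}$, where $\tilde S^{\mathrm{conj}}$ denotes $\tilde S$ endowed with the opposite complex structure (so that $\bar\sO(\tilde S) = \sO(\tilde S^{\mathrm{conj}})$), and then reduce the injectivity of $\iota$ to two simpler statements: injectivity of the ``separation'' map into $\sO(\tilde S \times \tilde S^{\mathrm{conj}})$, and vanishing on a maximal totally real submanifold.

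First I would define
\[
j: \sO(\tilde S) \otimes \bar\sO(\tilde S) \longrightarrow \sO(\tilde S \times \tilde S^{\mathrm{conj}}), \qquad \sum f_i \otimes \bar g_i \;\longmapsto\; \bigl((z,w) \mapsto \sum f_i(z)\, \bar g_i(w)\bigr),
\]
and verify that $j$ is injective: after reducing an element $\sum f_i \otimes \bar g_i$ so that $f_1,\ldots,f_n$ are $\C$-linearly independent in $\sO(\tilde S)$, the vanishing of $j(\sum f_i\otimes \bar g_i)(z,w)$ for all $z$ (with $w$ fixed) yields $\bar g_i(w)=0$ for every $i$, hence $g_i=0$. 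Since $\iota = (\text{restriction to the diagonal}) \circ j$, it remains to show that any $F\in\sO(\tilde S \times \tilde S^{\mathrm{conj}})$ vanishing on the diagonal $\Delta = \{(z,z)\,:\, z\in \tilde S\}$ vanishes identically.

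For this key step, I would pick a point $\tilde s_0\in \tilde S$ and local holomorphic coordinates $\zeta=(\zeta_1,\ldots,\zeta_d)$ centred at $\tilde s_0$; using $\zeta$ on the first factor and $\bar\eta$ on the second (where $\eta=(\eta_1,\ldots,\eta_d)$ are the corresponding conjugate coordinates), $F$ admits a convergent expansion
\[
F(z,w)=\sum_{\alpha,\beta} c_{\alpha\beta}\, \zeta^{\alpha}\, \bar\eta^{\beta}
\]
in a bidisc around $(\tilde s_0,\tilde s_0)$. Its restriction to $\Delta$ is the real-analytic function $\sum c_{\alpha\beta}\, \zeta^{\alpha}\bar\zeta^{\beta}$. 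Writing $\zeta=x+iy$, the monomials $\zeta^{\alpha}\bar\zeta^{\beta}$ span the space of real polynomials in $(x,y)$ via an invertible triangular change of basis, hence are $\C$-linearly independent as real-analytic functions. Consequently the hypothesis $F|_{\Delta}=0$ forces all $c_{\alpha\beta}=0$, so $F$ vanishes in a neighbourhood of $(\tilde s_0,\tilde s_0)$.

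Finally, the identity principle for holomorphic functions on the connected complex manifold $\tilde S\times \tilde S^{\mathrm{conj}}$ (connected because $\tilde S$ is connected, being the universal cover of a connected variety) upgrades this local vanishing to global vanishing of $F$, concluding the proof. The only delicate point, and what I would spend most care on, is the linear-independence step for the monomials $\zeta^{\alpha}\bar\zeta^{\beta}$ as real-analytic functions: this is really the arithmetic heart of the argument, whereas the passage from local to global vanishing is an immediate application of the standard identity theorem.
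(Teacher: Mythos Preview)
Your proof is correct and follows essentially the same strategy as the paper: both pass to a holomorphic function of two variables (you on $\tilde S \times \tilde S^{\mathrm{conj}}$ restricted to the diagonal, the paper on $\tilde S \times \tilde S$ restricted to the anti-diagonal $z=\bar w$), then use that a holomorphic function vanishing on a maximal totally real submanifold vanishes identically. The only cosmetic difference is that the paper sketches this last step via tangent spaces and induction on $d$, whereas you spell it out via linear independence of the mixed monomials $\zeta^{\alpha}\bar\zeta^{\beta}$; your formulation with $\tilde S^{\mathrm{conj}}$ is arguably cleaner since it avoids having to make sense of ``$z=\bar w$'' on an abstract manifold.
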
 
    
    \proof If $\sum  f_i(z)\bar g_i(z) = 0$, then $\sum  f_i(z)  g_i(w) \in \sO(\tilde S \times \tilde S)$ vanishes on the real-analytic subvariety of $\tilde S \times \tilde S$ given by the equation $z = \bar w$, hence on the smallest complex-analytic subvariety which contains it, which is $\tilde S \times \tilde S$ (the computation being local, one may replace $\tilde S$ by a open subset of $\C^d$; looking at tangent spaces, the result follows by induction on $d$).\qed
    
In fact, $\det (\Omega, \overline\Omega)^r. \sM$ can be written as a polynomial $P$ in the components of $\Lambda, \partial\Lambda,  \Omega, \partial\Omega$ and their complex-conjugates. By the previous lemma, one has 
  \begin{equation}\label{e35} P(\Lambda(\tilde s), \partial\Lambda(\tilde s),  \Omega(\tilde s), \partial\Omega(\tilde s), \overline\Lambda(\tilde s'), \overline\partial\overline\Lambda(\tilde s'),  \overline\Omega(\tilde s'), \overline\partial\overline\Omega(\tilde s')) =0
  \end{equation}
  for any pair $(\tilde s, \tilde s')\in \tilde S^2$.

  \subsection{Taking advantage of the monodromy: from a single equation to a parametrized system of equations}\label{mono} 
     If we temporarily fix $\tilde s'$, 
\eqref{e35} becomes an identity in $\C[\Lambda, \partial\Lambda, \Omega, \partial\Omega]$. 
In particular, it is invariant under the monodromy group, which acts on $\Lambda$ and $\Omega$ by the formulas \eqref{e9'} (which are independent of $\tilde s'$). 
  
  According to \cite[Th. 3]{A1}, under assumption $(*)$\footnote{this assumption implies that the algebraic subgroup $\tilde U\subset  \C^{2g}$ of \loccit is $\C^{2g}$ itself. Note also that this theorem uses the fact that the variation of mixed Hodge structure attached to the 1-motive $[\Z \stackrel{\xi}{\to} A]$ is good (= admissible), which is proven in lemma 5 of \loccit when $S$ is a curve. One can reduce to this case by taking a sufficiently general curve cut on $S$, or alternatively, by using the fact that ``goodness" may be checked in dimension $1$, \cf also \cite[Th. 2.2]{W}.}, the kernel of the map of monodromy groups $\{ \begin{pmatrix}   1 &  \nu_\delta \\ 
         0& M_\delta\end{pmatrix}  \}_{\delta \in \Delta}  
         \to \{ M_\delta\}_{\delta \in \Delta} $ is Zariski-dense in $\C^{2g}$  
(this result can be interpreted as a theorem of linear independence of abelian logarithms with respect to periods, \cf also \cite[\S 1]{A2} for a more concise and transparent proof). 
 Therefore, if one replaces $(\Lambda, \partial \Lambda)$ by $(\Lambda + \nu \Omega, \partial \Lambda + \nu \partial \Omega)$ for any $\nu \in \C^{2g}$, \eqref{e35} still holds.
 
\smallskip Note that \eqref{e35} expresses the annulation of an (arbitrary) minor of $$\iota^{-1}( I_{-\beta}\cdot \det (\Omega, \overline\Omega))).$$  
If one replaces $(\Lambda, \partial \Lambda)$ by $(\Lambda + \nu \Omega, \partial \Lambda + \nu \partial \Omega)$, then (by \eqref{e2}) $\iota^{-1}(\beta)   $ becomes $\iota^{-1}(\beta) + \nu   \iota^{-1}((\Omega, \overline\Lambda)(\Omega, \overline\Omega)^{-1})$, and  $\iota^{-1}( I_{-\beta}\cdot \det (\Omega, \overline\Omega))) $ becomes 
 $\iota^{-1}( I_{\breve\nu}\cdot \det (\Omega, \overline\Omega))),$ 
with  
  $$ \breve\nu  = (0, \nu\, \overline\Omega(\tilde s')) (\Omega(\tilde s), \overline\Omega(\tilde s'))^{-1} .$$
Since $\breve\nu$ depends only on $g$ parameters (instead of $2g$), this is not enough to conclude.

Instead of keeping $\tilde s'$ fixed, we will use monodromy of the second factor as well (and simultaneously), \ie the action of $\pi_1(S\times S, (s,s'))\cong \pi_1(S,s)\times \pi_1(S',s')$ (up to complex-conjugation on the second factor, this amounts to applying \cite[Th. 3]{A1} in the product situation $[\Z^2 \to A^2]/S^2$).  
 On the second factor, we have to replace $(\overline\Lambda, \overline\partial \overline\Lambda')$ by $(\overline\Lambda + \nu' \overline\Omega, \overline\partial \overline\Lambda + \nu' \overline\partial \overline\Omega)$ for any $\nu' \in \C^{2g}$. Then $\iota^{-1}( I_{-\beta}\cdot \det (\Omega, \overline\Omega)) $ becomes 
 $\iota^{-1}( I_{\breve\nu}\cdot \det (\Omega, \overline\Omega)),$ with  
  \begin{equation}\label{e36} \breve\nu = (\nu'\,\Omega(\tilde s),\; \nu\, \overline\Omega(\tilde s'))(\Omega(\tilde s), \overline\Omega(\tilde s'))^{-1} .\end{equation} 
For fixed $(\tilde s, \tilde s')$, it is then clear that $\breve\nu$ can take arbitrary values in $\C^{2g}$ when $(\nu, \nu')$ varies in $\C^{4g}$. 
One concludes that every minor $\sM_{\breve\nu}$ of order $r+1$ in $I_{\breve\nu}$ vanishes identically on $\tilde S$ for every $\breve\nu \in \C^{2g}$ as wanted. This finishes the proof of Theorems \ref{rk+} and \ref{rk2}.\qed

\smallskip \noindent {\it Proof of Cor. \ref{cor1}. } In this situation $d\geq g$.  As we have seen, by \eqref{e14}, Theorem \ref{rk+} implies that 
$$ \displaystyle\,{\rm{rk}} \, J \,  = 2\cdot \,\max_{\mu \in \C^{ g}} {\rm{rk}} \,H_{\mu}. $$ If $J$ is not a submersion, we have ${\rm{rk}} \, J < 2g$, hence for any $\mu, \, {\rm{rk}} \,H_{\mu} < g$. This amounts to saying that for any $(\mu_0, \mu_1, \ldots, \mu_g)\in \C^{g+1}$, all $g$-$g$-minors of the matrix with entries $\mu_0 \partial_i L_j  +  \sum_k \mu_k  \partial_iZ_{kj}$ vanish. For $\mu_0= 0$, consider the anaytic map sending $\tilde s\in \tilde S$ to $Z(\tilde s).\mu\in \C^{g}$. Its (complex-analytic) rank is $<g$, hence its non-empty fibers are complex analytic subvarieties of dimension $\geq \dim S +1 -g$.\qed

  \begin{rems} $1)$ Assumption $(\ast)$ is essential for \ref{rk+}: for instance, it is known that $\theta_A= 0$ if and only if $A$ becomes constant on a finite etale covering (\cf \cite[1.4.2 ii]{A3}), and by the same argument, $\theta_\xi= 0$ (equivalently: $\theta^\varpi_\xi= 0$ for all $\varpi$) if and only if $(A, \xi)$ becomes constant on a finite etale covering; on the other hand, $J = 0$ whenever $\xi$ is a torsion section, even if $A$ is non isotrivial.

 \smallskip\noindent $2)$ When $d=g$, Theorem \ref{rk1} expresses the condition ``$\beta$ is nowhere submersive" in the form: for any $\tilde s$,  $ (\partial_i L_j(\tilde s))$ and $(\partial_i Z_{kj}(\tilde s) ),\,  k=1,\ldots, g,$ span a vector space of singular matrices. One recognizes a special case of the notoriously difficult Dieudonn\'e problem: describe vector spaces of singular matrices, \resp of singular symmetric matrices, \cf \cite{Lo}\footnote{we are grateful to M. Brion for this reference.}.

\smallskip\noindent $3)$   Given an abelian scheme $A/S$, the vector bundle $\Omega_A$ on $S$ is endowed with a $\mathcal{D}_S$-module structure by the Gauss-Manin connection. The Kodaira-Spencer map $\theta_A$ is the $\mathcal{O}_S$-linear map
 $T_S\otimes \Omega_A\to \mathcal{H}^1_{\mathrm{dR}}(A/S)/\Omega_A$
 induced by the Gauss-Manin connection. It is then natural to introduce and compare the following `ranks', as done in \cite{A3}:\smallskip
 
 $r=r(A/S)=\mathrm{rk}\, \mathcal{D}_S \Omega_A/\Omega_A$,\smallskip
 
 $r'=r'(A/S)=\mathrm{rk}\,\theta = \mathrm{rk}\, \mathcal{D}_S^{\leq 1}\Omega_A/\Omega_A$\smallskip

 $r''=r''(A/S)=\max_{\partial} \mathrm{rk}\, \theta_\partial$.
 
 \smallskip\noindent
 One always has $r''\leq  r'\leq r\leq g$. Usually, by rank of the Kodaira-Spencer map one means the integer $r'$, and one  says that the Kodaira-Spencer map is non-degenerate if $r'=g$, which amounts to the condition  \cite[Lemma 1.4.5]{A3}
\begin{equation}\label{E.degenerescenza}
\forall \omega ,\, \exists \partial, \quad \theta_{A,\partial} \,\omega \neq 0. 
\end{equation}
 On the other hand, for $d= g$, the hypothesis of our Cor. \ref{cor2}  reads:
\begin{equation}\label{E.deg-pisana}
\exists \omega,\,  \forall \partial ,\quad \theta_{A,\partial} \, \omega \neq 0.
\end{equation}
There is no implication between \eqref{E.degenerescenza} and \eqref{E.deg-pisana}  in either direction in general (but we shall see that the conditions are related if $g\le 3$). 
\end{rems}

           \medskip\section{The case $g\leq 3$.}\label{S.g=3} 
      
 In this section, we shall use simple linear-algebraic arguments to tackle the case $g\leq 3$, using Cor. \ref{cor2}. However, for $g=3$, such arguments don't quite suffice and we shall have to invoke a result from \cite{A3} in order to obtain a complete proof of Theorem \ref{tg=3}.
 
 \subsection{} We have to show that under the assumptions of Theorem \ref{tg=3}, condition \eqref{E.deg-pisana} holds. 
 We may replace $S$ by a smooth locally closed subvariety of its image under $\mu_A$ (in order to {\it reduce to the case $d= d_{\mu_A}= g\leq 3$}), and $A$ by the restriction to $S$ of the universal abelian scheme $\mathcal X_{g,n}$. 
      In this situation, for any $s\in S$, the map 
       \begin{equation}\label{eq10}   \, T_{S,s} \stackrel{\partial \to \theta_{A,\partial} (s)}{\to} {\rm{Sym}}^2\varOmega_{A_s}^\vee\end{equation}  induced by Kodaira-Spencer is injective (\cf \cite[2.1.2]{A3}). 
       
       \begin{lemma}\label{Lemma-Lin-Alg}  
Suppose $g\leq 3$ and let $W $ be a vector space of dimension $g$ of quadratic forms on $\C^g$, containing a non-degenerate quadratic form. There exists a vector on $\C^g$ which is not in the kernel of any non-zero quadratic form in $W$.
\end{lemma}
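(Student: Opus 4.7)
My plan is to show that the union of kernels
\[
K := \bigcup_{q \in W \setminus \{0\}} \ker q
\]
is a proper closed algebraic subvariety of $\C^g$; then any $v \in \C^g \setminus K$ satisfies the conclusion. I will bound $\dim K$ by studying the incidence
\[
\mathcal{I} := \{(v, [q]) \in \C^g \times \P(W) : q(v, \cdot) = 0\},
\]
whose fibers over $\P(W)$ are the kernels $\ker q$, of dimension $g - \mathrm{rk}(q)$. Stratifying $\P(W)$ by rank, the rank-$r$ stratum contributes at most $\dim \P(W)_{=r} + (g-r)$ to $\dim K$, so it suffices to show this sum is strictly less than $g$ for every $r$.

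The non-degeneracy hypothesis enters to bound the two deepest strata. First, since $\det$ does not vanish identically on $W$, the full discriminant $\P(W)_{\leq g-1}$ is a proper hypersurface of dimension $g - 2$ in $\P(W) = \P^{g-1}$. Second, and this is the key point specific to $g = 3$, I need the rank-$1$ stratum of $\P(W)$ to be $0$-dimensional. The rank-$1$ forms in $\P(\mathrm{Sym}^2(\C^3)^*) = \P^5$ form the Veronese surface $\nu_2(\P^2)$, and a $1$-dimensional component of $\P(W) \cap \nu_2(\P^2)$ would, by a degree argument (a curve simultaneously lying on the Veronese surface and inside a $\P^2 \subset \P^5$), have to be the image $\nu_2(\ell)$ of a projective line $\ell \subset \P^2$; this forces $\P(W)$ to coincide with the $\P^2$ spanned by $\nu_2(\ell)$, a space consisting entirely of quadratic forms that share a common $1$-dimensional kernel in $\C^3$ and are therefore all degenerate---contradicting the hypothesis that $W$ contains a non-degenerate form.

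With the rank strata under control the case-by-case count is immediate. For $g = 1$, $K = \{0\}$. For $g = 2$, only the rank-$1$ stratum contributes, which is a finite set in $\P^1$, giving $\dim K \leq 0 + 1 = 1 < 2$. For $g = 3$, the rank-$2$ stratum contributes at most $1 + 1 = 2$ and the rank-$1$ stratum at most $0 + 2 = 2$, yielding $\dim K \leq 2 < 3$. In all cases $K \subsetneq \C^g$, proving the lemma. The main obstacle is precisely the rank-$1$ bound for $g = 3$, which relies on the Veronese-surface argument together with the non-degeneracy hypothesis; this is what restricts the simple linear-algebraic approach to $g \leq 3$.
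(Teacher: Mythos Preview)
Your proof is correct and follows the same rank-stratification and dimension-count strategy as the paper's proof. The one point of departure is how you establish finiteness of the rank-$1$ locus in $\P(W)$ for $g=3$: the paper argues that if $\P(W)$ contained infinitely many double lines it would contain three linearly independent ones, which (being in general position, else every conic in the net would be singular at their common point) diagonalize $W$ and leave exactly three double lines; you instead observe that a curve in $\P(W)\cap\nu_2(\P^2)$ must be $\nu_2(\ell)$ for a line $\ell$ (since the linear span of $\nu_2(D)$ has dimension $\geq 4$ once $\deg D\geq 2$), forcing $\P(W)$ to equal the span of $\nu_2(\ell)$ and hence to consist entirely of forms with a common kernel. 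Both arguments invoke the non-degeneracy hypothesis at precisely this step, and the paper's subsequent remark confirms that this rank-$1$ bound is exactly what fails for $g\geq 4$.
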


 \begin{proof}
 We focus on the case $g=3$, leaving to the reader the (easy) verification for $g\leq 2$ (in that case, the assumption that $W$ contains a non-degenerate form is in fact automatic).
 The content of the above lemma can be rephrased in geometric terms as follows: the projective plane $\P(W)\subset\P(\mathrm{Sym}^2 (\C^3)^\vee)$ is a two-dimensional linear system of conics in $\P^2$. By hypothesis, $\P(W)$ contains at least one smooth conic. The lemma asserts that there exists a point $p\in\P^2$ such that no conic of the linear system $\P(W)$ is singular at $p$.

Recall that singular conics are either pairs of distinct lines, corresponding to quadratic forms of rank $2$, or double lines, corresponding to quadratic forms of rank $1$, i.e. squares of linear forms. In our setting, we start by proving that the set of double lines in $\P(W)$ is finite. In fact, $\P(W)$ is generated by three conics, and if it contains three double lines, these must be in general position (otherwise every conic in $\P(W)$ would   contained  their intersection and would be singular at that point). But in that case, $W$ is generated by the squares of three independent linear forms, so in suitable coordinates $W$ would consist  of the space of diagonal matrices, and $\P(W)$ then contains exactly three double lines.

It follows that the set of points which belong to some double line belonging to $\P(W)$ is one-dimensional or empty. 
 Also, since the generic conic in $\P(W)$ (which is two-dimensional) is smooth, the set of pairs of lines in $\P(W)$ is one-dimensional; each such pair of lines has exactly one singular point. It follows that the set of points $p$ of the plane such that $p$ is singular for at least one conic in $\P(W)$ is one-dimensional, so there is a point $p$ outside this set, as wanted.
  \end{proof}

\subsection{} In order to apply this lemma to \eqref{eq10} and derive \eqref{E.deg-pisana}, it is enough to show that, under our assumption, at least one $\theta_{A,\partial} $ is of rank $g$, \ie the Kodaira-Spencer map is non-degenerate in the sense of \eqref{E.degenerescenza}. 

 For $g\leq 2$, this is automatic. For $g=3$, one has the following general theorem of \cite{A3}, which relies on the theory of automorphic vector bundles: \eqref{E.degenerescenza} holds if $A/S$ is of ``restricted PEM type" (\cf \cite[4.2.2]{A3}). To be ``of PEM type" means that the connected algebraic monodromy group is maximal with respect to the polarization and the endomorphisms, and ``restricted" means that if the center $F $ of ${\rm End}_SA \otimes \Q$ is a CM field, then $\varOmega_A$ is a free $F\otimes_\Q \sO_S$-module\footnote{\loccit focusses on the case when the generic fiber of $A$ is absolutely simple, but the result extends in a straightforward way to the case when all the factors of the geometric generic fiber of $A$ come from non constant abelian schemes of restricted PEM type (over some finite covering of $S$).}.

It is well-known that all cases with $g=3$ are of PEM type\footnote{the first case not of PEM type is Mumford family with $g=4$.}, and even of restricted PEM type except when $End_S A$ is an imaginary quadratic order (over any finite covering of $S$). This exceptional case is investigated in \cite[4.1.2]{A3}, and it turns out that $d_{\mu_A}= 2< g= 3$, which is ruled out by our assumptions.   \qed

 \begin{rem} One can follow the pattern of proof of  Lemma \ref{Lemma-Lin-Alg} for higher values of $g$. This works provided the web $W$ of quadratic forms satisfies the following condition: for each $g'=0,\ldots,g$, the algebraic subvariety of $W$ formed of quadratic forms of rank $\leq g'$ has dimension $\leq g'$.
  This is not automatic for $g>3$: for $g= 4$, consider the web of quadratic forms spanned by $x_0^2, x_0x_1, x_1^2, x_2x_3$, which is connected with a four-dimensional  family of abelian varieties not satisfying condition \eqref{E.deg-pisana}. This  family consists of the products $A\times E^2$, where $A$ is a principally polarized abelian surface and $E$ an elliptic curve. 
  \end{rem}

\section{The hyperelliptic case.}
  \subsection{} Hyperelliptic curves of genus $g\geq 2$ admit a plane model given by an equation of the form $y^2=f(x)$, where $f(x)\in\C[x]$ is a polynomial of degree $2g+1$ without repeated roots. Up to affine transformations on the variable $x$, one can suppose that two of the roots are $0$ and $1$.
It is natural to consider the base 
\begin{equation*}
S:=\{(s_0,\ldots,s_{2g-2})\in \C^{2g-1}\, |\, s_i\neq 0,1,\ s_i\neq s_j\ \mathrm {for}\ i\neq j \}\subset \C^{2g-1}
\end{equation*}
and associate to any point $s=(s_0,\ldots,s_{2g-2})$ the hyperelliptic curve  obtained as a smooth completion of the plane affine curve 
\begin{equation}\label{E.Y_s}
Y_s:\quad y^2= x(x-1)(x-s_0)\cdots(x-s_{2g-2})=:f(x).
\end{equation}

We then obtain a family $\pi:Y\to S$ of hyperelliptic curves.
To the curve $Y_s$ we associate its jacobian $A_s$,  thus obtaining a $(2g-1)$-dimensional family of abelian varieties, over the base $S$.
In this case, the map $\mu$ to the moduli space of principally polarized genus $g$ abelian varieties is generically finite.
\smallskip

We identify the tangent space at $Y_s$ to the Torelli locus (of jacobians) in $\sA_g$ with  the dual of the vector space of quadratic differentials on $Y_s$ (which in turn is isomorphic to the first cohomology space  $H^1(Y_s,T_{Y_s})$). 
 A basis for the space $\Gamma(Y_s,\Omega_{Y_s}^1)$ consists of the 1-forms $\omega_0,\ldots,\omega_{g-1}$ defined, for $j=0,\ldots,g-1$, by $\omega_j :=\frac{x^j \mathrm{d}x}{y}.$

The quadratic differentials form a vector space $\Omega^{\otimes 2}_{Y_s}$ of dimension $3g-3$.
The hyperelliptic involution $\iota=\iota_s$, sending $(x,y)\mapsto \iota(x,y)=(x,-y)$, acts on this space. The $2g-1$ quadratic differential forms  $\frac{x^j \mathrm{d}x^2}{y^2}, \;j=0,\ldots,2g-2,$
turn out to be $\iota$-invariant, and form a basis of the space  of even quadratic differentials. Note that this space has the same dimension as the moduli space of genus $g$ hyperelliptic curves.

The Kodaira-Spencer map, in the form of (1.14) in \cite{A3}, can be viewed as a linear map $\theta: T_S(s)\to\mathrm{Sym}^2(\mathrm{Lie}A_s)$ whose image in contained in the subspace of even vectors of $\mathrm{Sym}^2(\mathrm{Lie}A_s)$.  
Hence we see $\theta_\partial$ as a functional on the space of quadratic differentials.

\medskip

The following computation of the Kodaira-Spencer map is probably well-known to experts, but we could not find a reference.

\begin{prop}\label{T.explicitKS}
In the above setting, for each $i=0,\ldots,2g-2$, there exists a non-zero number $c_i=c_i(s)$, depending on the point $s=(s_0,\ldots,s_d)\in S$, such that  for all $j=0,\ldots, 2g-2$
\begin{equation}\label{E.explicitKS}
\theta_{\partial/\partial s_i}\left(\frac{x^j\mathrm{d}x^2}{y^2}\right)=
c_i {s_i}^{j}.
\end{equation}
Also, for all $j=0,\ldots,g-3$, $\theta_{\partial/\partial s_i}\left(\frac{x^j\mathrm{d}x^2}{y}\right)=0.$
In particular, the Kodaira-Spencer map  induces an isomorphism between the tangent space $T_S(s)$ to the base at a given point $s$ and the space of linear functionals on $\Gamma(Y_s,\Omega^{\otimes 2}_{Y_s})$ vanishing on the subspace of odd quadratic differentials.
\end{prop}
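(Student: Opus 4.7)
The plan is to compute the Kodaira-Spencer class $\theta_{\partial/\partial s_i}\in H^1(Y_s,T_{Y_s})$ explicitly as a \v{C}ech $1$-cocycle and then evaluate its Serre-duality pairing with the candidate quadratic differentials as a sum of residues on the smooth compactification of $Y_s$.

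I first build the cocycle. The total space $Y\subset\mathbb{A}^1_{x,y}\times S$ is cut out by $y^2=f(x,s)$, and two natural lifts of $\partial/\partial s_i$ to tangent fields on $Y$ are
\[
V_1=\frac{\partial}{\partial s_i}-\frac{f_{s_i}}{f_x}\partial_x,\qquad V_2=\frac{\partial}{\partial s_i}+\frac{f_{s_i}}{2y}\partial_y,
\]
both satisfying the tangency condition coming from $y^2-f=0$. The key observation is that $f_{s_i}=-f/(x-s_i)$ is a \emph{polynomial} and $f_x$ is nonvanishing at every finite branch point (where $f$ has simple zeros), so $V_1$ extends across each branch point of $Y_s$; $V_2$ extends wherever $y\neq 0$. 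Taking $U_1$ to be a disjoint union of disks around all branch points (including $\infty$) and $U_2=Y_s\setminus\{\text{branch points}\}$, the difference $V_1-V_2$ represents $\theta_{\partial/\partial s_i}$, and as a section of $T_{Y_s}$ in the local coordinate $x$ on the overlap it simplifies to $v=-\frac{f_{s_i}}{f'(x)}\partial_x$.

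Serre duality then reads $\theta_{\partial/\partial s_i}(\phi)=\sum_P \mathrm{Res}_P(v\cdot\phi)$. For $\phi=x^j\,\mathrm{d}x^2/y^2$, substituting $y^2=f$ and $f_{s_i}/f=-1/(x-s_i)$ gives $v\cdot\phi=x^j\,\mathrm{d}x/((x-s_i)f'(x))$. At a finite branch point $P_j=(s_j,0)$ with $j\neq i$ the integrand is holomorphic in $x$ and $\mathrm{d}x=O(t)\,\mathrm{d}t$ in the local parameter $t=y$, so the residue is zero; at $P_i$ the expansions $x-s_i\sim t^2/f'(s_i)$ and $\mathrm{d}x\sim(2t/f'(s_i))\,\mathrm{d}t$ give residue $2s_i^j/f'(s_i)$; at $\infty$, the local parameter $u$ with $x\sim u^{-2}$, $y\sim u^{-(2g+1)}$, $\mathrm{d}x\sim u^{-3}\,\mathrm{d}u$ yields $u^{4g-2j-1}\,\mathrm{d}u$, holomorphic for $0\le j\le 2g-2$. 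This proves \eqref{E.explicitKS} with $c_i=2/f'(s_i)\neq 0$. An entirely parallel calculation for $\phi=x^j\,\mathrm{d}x^2/y$ gives $v\cdot\phi=x^jy\,\mathrm{d}x/((x-s_i)f'(x))$; the extra factor $y\sim t$ cancels the pole at $P_i$, and at $\infty$ the form becomes $u^{2g-2j-2}\,\mathrm{d}u$ of non-negative even order for $0\le j\le g-3$, so every residue vanishes. The isomorphism conclusion then follows from the fact that the matrix $(c_i s_i^j)_{0\le i,j\le 2g-2}$ is a nonzero scalar multiple of a Vandermonde matrix in the pairwise distinct $s_0,\ldots,s_{2g-2}$.

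The only delicate point is bookkeeping: verifying that $V_1$ truly extends through every branch point (which relies on the precise algebraic form $f_{s_i}=-f/(x-s_i)$), and keeping track of the local parameter at the branch points and at infinity so that the residue contributions are correctly tallied. The argument is otherwise entirely elementary once the cocycle $v$ has been identified.
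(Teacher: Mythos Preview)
Your argument is correct and follows essentially the same route as the paper. Your lifts $V_1,V_2$ are precisely the paper's $\partial_{i,y},\partial_{i,x}$ (the lifts killing $y$ and $x$ respectively), and your cocycle $v=-(f_{s_i}/f')\partial_x$ is, up to sign, the paper's $\partial_{xy}$ once one notes that the paper's auxiliary function $h(x)$ equals $f'(x)$. The only organisational difference is that the paper uses a three-chart cover $U_x,U_y,U_\xi$ with an explicit local parameter $\xi=x^g/y$ at $\infty$, whereas you use a two-chart cover and absorb $\infty$ into $U_1$, checking directly that the residue there vanishes; since both $V_1$ and $V_2$ have vertical parts extending holomorphically across $\infty$ (as your order computation $u^{4g-2j-1}$ confirms), either bookkeeping leads to the same residue tally, and the Vandermonde conclusion is identical.
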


  The above proposition enables to deduce Thm. \ref{hyp}  from Cor. \ref{cor2}: 
due to the surjectivity of the natural map $\Gamma(Y_s,\Omega_{Y_s})^{\otimes 2}\to\Gamma(Y_s,\Omega_{Y_s}^{\otimes 2}) $, 
Prop. \ref{T.explicitKS} enables to find for each $s\in S$ a vector $\omega\in \Gamma(S,\Omega_{A})$ such that the rank of $\theta_A^\omega(s)$ is $g$.

\subsection{} The strategy leading to formula \eqref{E.explicitKS} is as follows.
 Given a derivation $\partial$ on the base $S$ (e.g. $\partial=\partial/\partial s_i$)  we can cover the total space of the fibration $\pi: \mathcal{Y}\to S$ by open sets $U_\alpha$ on which there exist derivations $\partial_\alpha\in\Gamma(U_\alpha,T_{\mathcal{Y}})$ such that $\pi_*(\partial_\alpha)=\partial$. On the intersections $U_\alpha\cap U_\beta\cap {Y}_s$ the differences  $\partial_\alpha-\partial_\beta$ are derivations on $Y_s$ and form a cocyle. One can write each difference $\partial_\alpha-\partial_\beta$ in $U_\alpha \cap U_\beta$ as $\tilde{\partial}_\alpha-\tilde{\partial}_\beta$ for meromorphic derivations $\tilde{\partial}_\alpha$ (resp. $\tilde{\partial}_\beta$) on $U_\alpha$ (resp. $U_\beta$).

Then given a quadratic differential, which can be written as $\omega_1\times\omega_2$, for meromorphic differentials $\omega_1,\omega_2$, 
and a point $P\in {Y}_s$, taking an index $\alpha$ such that $P\in U_\alpha$, we can calculate the residue at $P$ of the meromorphic $1$-form $\omega_1(\tilde{\partial}_\alpha)\cdot\omega_2$.  The sum over $P\in Y_s$
of these numbers is taken to be the value of $\theta_\partial(\omega_1\otimes\omega_2)$  
of the Kodaira-Spencer map at this quadratic differential $\omega_1\times\omega_2$.

\subsection{} 

Let us consider the open set $U_x$   in which the $x$-function (as appearing in equation \eqref{E.Y_s}) is regular and has non-zero differential; analogously we define $U_y$ to be the open set where $y$ is regular and has non-zero differential. Note that $(U_x\cup U_y)\cap Y_s$ contains all points of $Y_s$ but the unique point at infinity.

Then observe that $\xi:=x^g/y$ is a local parameter at infinity. Denote by $U_\xi$ a neighborhood of the point at  infinity where $\xi$ is regular and its differential is non-zero; we can also choose $U_\xi$ so that it does not contain any point of $(U_x\cup U_y)\setminus (U_x\cap U_y)$, so that in $U_\xi\setminus\{\infty\}$ the three functions $\xi,x,y$ are regular with non-zero differential.
\smallskip

We then define the derivations $\partial_{i,x},\partial_{i,y},\partial_{i,\xi}$ as the unique derivations on the corresponding open sets extending  the derivation $\frac{\partial}{\partial s_i}$ on $S$ and such that $\partial_{i,u} u=0$ for $u\in\{x,y,\xi\}$. 
 We want to calculate the cocycle formed by the differences. We then calculate the values of $\partial_x,\partial_y,\partial_\xi$ on the two functions $x,y$ on $Y_s$.
 Using the equation for $Y_s$ and the fact that $\partial_x x=\partial_y y =0$ we find (omitting for simplicity the index $i$):
\begin{equation*}
\partial_x y =\frac{-y}{2(x-s_i)},\qquad \partial_y x = \frac{y^2}{h(x)(x-s_i)}.
\end{equation*}
Here the rational function $h(x)$ is defined as
\begin{equation*}
h(x)=\frac{f(x)}{x}+\frac{f(x)}{x-1}+\sum_{i=0}^{2g-1} \frac{f(x)}{x-s_i}
\end{equation*}
and satisfies $2y\mathrm{d}y=h(x)\mathrm{d}(x).$ Note that $h(x)$ is non-zero on $U_y$.
 Using that $\partial_\xi \xi=0$ and again the equation for $Y_s$, we obtain
\begin{equation*}
\partial_\xi x=\frac{xy^2}{(xh(x)-2g y^2)(x-s_i)},\qquad \partial_\xi y=\frac{gy^3}{(xh(x)-2gy^2)(x-s_i)}
\end{equation*}

\noindent This gives explicit formulae for $\partial_{xy}:=\partial_x-\partial_y$, $\partial_{x\xi}:=\partial_x-\partial_\xi$ and $\partial_{y\xi}:=\partial_y-\partial_\xi$. From these formulae, we see that all of them are regular at infinity. Looking at $\partial_{xy}$ we get
\begin{equation}\label{E.cocycle}
\partial_{xy} x=\frac{-y^2}{h(x)(x-s_i)},\qquad \partial_{xy} y =\frac{-y}{2(x-s_i)}.
\end{equation}
Choosing $\tilde{\partial}_x\in\Gamma(U_x,T_{Y_s}\otimes \C(Y_s))$ to be the zero constant, and $\tilde{\partial}_y\in\Gamma(U_y,T_{Y_s}\otimes \C(Y_s))$ to be the meromorphic derivation on $U_y$ satisfying the above formulae \eqref{E.cocycle} (with a minus sign, so that $\partial_{xy}=\tilde{\partial}_x-\tilde{\partial}_y$)  we see that the only pole in $U_y$ lies at the point $P_i=(s_i,0)$ (recall that the zeros of $h(x)$ do not belong to $U_y$). Applying the quadratic differential   $\frac{\mathrm{d}(x)^2}{y^2}$ we obtain a meromorphic differential $1$-form with simple pole  at $P_i$. Letting $c_i$ be its residue, we obtain formula \eqref{E.explicitKS}.  

If we apply instead a quadratic differential of type $\frac{\mathrm{d}(x)^2}{y}$,  we obtain a  $1$-form which is regular everywhere on $U_y$.
This shows that for suitable bases of $T_S(s)$ and $\Gamma(\Omega^{\otimes 2}, Y_s)^\vee$, the matrix of the Kodaira-Spencer map has the form $(T,0)$, for a square matrix   $T$ of order $2g-1$, which is a Vandermonde matrix, hence non-singular.\qed
\bigskip

 \section{Abelian schemes with $End_S A=\Z$ and $d_{\mu_A} \geq g$.}
 
 \subsection{} In this section, we prepare the proof of Theorem \ref{lbrk} by reducing to the case of maximal monodromy, the proof of which is treated in detail in App. 2.
  
 \begin{thm}\label{T8} Let $A\to S$ be a principally polarized abelian scheme of relative dimension $g$, such that
 \begin{enumerate}
 \item  $A$ has no non-trivial endomorphism over any finite covering of $S$,   \item $d_{\mu_A} :=  \dim  Im( \mu_A : S \to \sA_g$) is at least $g$. \end{enumerate}
 
 Then the monodromy of $A\to S$ is Zariski-dense in $Sp_{2g}$. \end{thm}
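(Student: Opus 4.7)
The plan is to identify the connected algebraic monodromy group $H$ of the variation $R^1f_*\Q$ (where $f:A\to S$) with a normal subgroup of the derived generic Mumford-Tate group, and then use the two hypotheses to force that Mumford-Tate group to be $GSp_{2g}$. Concretely, let $V_\Q:=H^1(A_{\bar\eta},\Q)$ for the geometric generic point $\bar\eta$ of $S$, equipped with the symplectic form coming from the principal polarization; write $H\subset Sp(V_\Q)=Sp_{2g,\Q}$ for the connected algebraic monodromy and $MT\subset GSp(V_\Q)$ for the generic Mumford-Tate group of the variation. By André's refinement of Deligne's theorem of the fixed part (applied to the admissible variation underlying the abelian scheme), $H$ is a connected normal subgroup of $MT^{\mathrm{der}}$.

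First I would translate the two hypotheses. Hypothesis (1) gives $\mathrm{End}(A_{\bar\eta})=\Z$, which by Mumford-Tate theory amounts to $\mathrm{End}_{MT}(V_\Q)=\Q$; in particular $V_\Q$ is an irreducible $MT$-representation with commutant $\Q$. For hypothesis (2), André--Deligne shows that $\mu_A(S)\subset\sA_g$ is contained in the special subvariety $Sh(MT,X_{MT})\subset\sA_g$ attached to the Mumford-Tate datum (the smallest special subvariety containing $\mu_A(S)$), and $\mu_A(S)$ is Hodge-generic in it; hence $\dim Sh(MT,X_{MT})\geq d_{\mu_A}\geq g$.

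The main obstacle, and core of the proof, is then the following classification step: among all Shimura sub-data $(G,X)$ of $(GSp_{2g},\mathfrak{H}_g)$ whose tautological symplectic representation is absolutely irreducible with commutant $\Q$, the only one with $\dim X\geq g$ is $(GSp_{2g},\mathfrak{H}_g)$ itself. Proper PEL-type sub-data are immediately excluded by (1), since they would give rise to extra endomorphisms on $A_{\bar\eta}$. The Hodge-type but non-PEL sub-data are of \emph{Mumford type}, and are classified by the reductive $\Q$-groups admitting an absolutely irreducible minuscule symplectic representation of Hodge type; a case-by-case inspection of the corresponding Hermitian symmetric domains shows that the dimensions of these proper sub-data are all strictly less than $g$ (already in Mumford's original construction with $g=4$ one has $\dim X=1$, and the generalisations of Satake/Shimura are bounded in the same spirit). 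This forces $MT=GSp_{2g}$, hence $MT^{\mathrm{der}}=Sp_{2g}$.

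Finally, $H$ is a connected normal subgroup of the almost-simple group $Sp_{2g}$, so $H=\{1\}$ or $H=Sp_{2g}$. A trivial $H$ would mean that $R^1f_*\Q$ has finite monodromy, so that after a finite étale cover of $S$ the abelian scheme $A\to S$ would be isotrivial and $\mu_A$ would factor through a point, contradicting $d_{\mu_A}\geq g\geq 1$. Therefore $H=Sp_{2g}$, as wanted.
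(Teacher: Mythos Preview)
Your overall strategy coincides with the paper's: reduce to showing that the generic Mumford--Tate group $MT$ equals $GSp_{2g}$, using Andr\'e's theorem that the connected algebraic monodromy $H$ is normal in $MT^{\mathrm{der}}$, together with the simplicity of $Sp_{2g}$. Hypothesis~(1) is translated into $\mathrm{End}_{MT}(V_\Q)=\Q$, and hypothesis~(2) into $\dim X_{MT}\ge g$; both translations are correct and match the paper.

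The gap is that you do not actually carry out the classification step, which is the entire substance of the proof. Saying that ``a case-by-case inspection of the corresponding Hermitian symmetric domains shows that the dimensions of these proper sub-data are all strictly less than $g$'' is precisely what has to be \emph{proved}, and the dichotomy ``PEL-type vs.\ Mumford type'' is too coarse to organize that inspection. The paper proceeds as follows. By Borovoi's classification of abelian varieties with $\mathrm{End}=\Z$, the special Mumford--Tate group is $\Q$-simple of the form $G=\mathrm{Res}_{F/\Q}H$ with $F$ totally real of degree $m$ and $H$ absolutely simple over $F$; over $\R$ one has $G_\R=\prod_{i=1}^m H_i$ with only $H_1$ non-compact, and $H^1(B,\C)=\bigotimes V_i$ with each $V_i$ an irreducible symplectic $H_{i,\C}$-representation of the same dimension $2d$, so $(2d)^m=2g$ and $m$ is odd. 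Since $H_1$ is classical and the relevant representation is minuscule (Deligne), one runs through the series $A_\ell,B_\ell,C_\ell,D_\ell$, listing the symplectic minuscule representations (with the attendant congruence constraints $\ell\equiv 1\,(4)$, $\ell\equiv 1,2,5,6\,(8)$, etc.), writing down $\dim_\R X_1$ in each case, and checking the inequality $\dim_\R X_1\ge 2g=(2d)^m$. One finds that the only solution is the $C_\ell$ case with $m=1$, i.e.\ $G=Sp_{2g}$; in particular the case $m>1$ (which includes many non-PEL examples far beyond Mumford's $g=4$ family) must be excluded by this explicit numerical comparison, not by a general remark. Your outline is correct, but you should supply this analysis.
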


\proof One uses the representation-theoretic classification of abelian varieties $\,B\,$ with $\,{\rm{End}}\, B = \Z\,$ given by Borovoi \cite{Bo} (which one applies to a general fiber $B$ of $A\to S$). He proves that the special Mumford-Tate group is $\Q$-simple, and more precisely has the form $G = Res_{F/\Q} H$ where $F$ is a totally real number field and $H$ is an absolutely simple $F$-group. Thus $G_\R$ decomposes as a product  $\Pi_1^m H_i $ of absolutely simple real groups (obtained from $H$ at the various real places of $F$), and it follows from Satake's classification that {\it only $H_1$ is non-compact} (up to permutation). Accordingly, $H^1(B, \C)$ decomposes as a tensor product $\otimes V_i$ where each $V_i$ is an irreducible symplectic $\C$-representation of $H_{i\C}$. They are conjugate under the Galois group of $F^{gal}/\Q$, and in particular have the same dimension $2d$,  so that \begin{equation} (2d)^m = 2g,\end{equation} and {\it $m$ is odd} (again by the symplectic condition). 

In order to prove the theorem, one may assume that that $A[n]$ is constant for some $n$, and relplace $S$ by the smallest special subvariety of $\sA_{g,n}$ containing $S$, and $A$ by the universal abelian scheme over it (\cf \eg App. 2. Lemma 2.6). The conclusion of the theorem amounts to saying that generic Mumford-Tate group is $GSp_{2g}$, or equivalently that the special Mumford-Tate group $G$ of a general fiber $B$ is $Sp_{2g}$. 

It is known that, taking the above notation, $H_{1}$ is actually a classical absolutely simple $\R$-group: in particular, $H_{1\C}$ belongs to one of the series $A_\ell, B_\ell, C_\ell, D_\ell$ (with $\ell\geq 2,3,1,4$ respectively\footnote{more convenient here than the usual convention $1,2,3,4$: but $A_1 = C_1$ and $B_2= C_2$.}), and accordingly the hermitian symmetric domain $X_1 = \tilde S$ belongs to one of the series $$X_{A_\ell}=  SU(r,\ell +1 -r)/ S(U(r) \times U(\ell +1 -r)),\;  r\in \{1, \ldots, \ell\},$$  $$X_{B_\ell} = SO(2,2\ell -1)/SO(2)\times SO(2\ell-1),$$   \centerline{ $X_{C_\ell} = \frak H_{\ell}, $ the Siegel space,} $$X_{D^{\R}_\ell}= SO(2,2\ell-2)/SO(2)\times SO(2\ell-2), $$ \centerline{``quaternionic version"  of the latter, $X_{D^{\mathbb{H}}_\ell} = SO(2\ell)^{\mathbb{H}}/U(\ell)$.}  
 Their real dimensions are respectively  \begin{equation} 2r(\ell+1-r),\;  2(2\ell -1),\; \ell(\ell+1) ,\; 4(\ell -1), \;\ell(\ell-1) .\end{equation}
For such $A\to S$, condition $(2)$ in the theorem means that $\dim_\R X_1 \geq 2g$, which becomes in each case:
\begin{equation} 2r(\ell+1-r)\geq (2d)^m,\; 2(2\ell -1)\geq (2d)^m,\; \ell(\ell+1)\geq (2d)^m,\end{equation} $$\; 4(\ell -1)\geq (2d)^m, \; \ell(\ell-1)\geq (2d)^m. $$

To relate $\ell$ and $d$, one needs one more piece of information about the representation $V_{1\C}$:  according to Deligne, it is minuscule \cite{D2}. The classification of minuscule representations shows that in case $A_\ell$, $V_{1\C}$ is a wedge power $\wedge^i V_{st} $ of the standard representation,
 in case $B_\ell$ the spin representation of dimension $2d = 2^\ell$, in case $C_\ell$ the standard representation $V_{st}$ of dimension $2d = 2\ell$, in case $D_\ell$ either the standard representation or the half-spin representations of dimension $ 2d = 2^{\ell-1}$.   

The only symplectic cases among them are: in case $A_\ell $, $\wedge^{(\ell+1)/2} V_{st} $ if $\ell  \equiv 1 (4)$, in case $B_\ell$ the spin representation if $\ell \equiv 1,2, 5, 6 (8)$, in case $C_\ell$ the standard representation $V_{st}$, and in case $D_\ell$ the half-spin representations if $\ell \equiv 2 (4)$  (cf. also Mustafin's table \cite{Mu}).  These conguences imply $\ell \geq 5$ in the $A_\ell $ and $B_\ell$ cases, and $\ell \geq 6$ in the $D_\ell$ case.
 
The above inegalities become
\begin{equation} 2r(\ell+1-r)\geq  \begin{pmatrix} \ell +1\\ (\ell+1)/2\end{pmatrix}^m,\; 2(2\ell -1)\geq  2^{\ell m} ,\end{equation}
$$\; \ell(\ell+1)\geq (2\ell)^m,\; 4(\ell -1)\geq  2^{(\ell-1)m} ,  \ell(\ell-1)\geq  2^{(\ell-1)m}, $$ with $m$ odd. 

Let us first consider the third inequality ($C_\ell$ case). Since $m$ is odd, it is fulfilled if and only if $m=1$. This corresponds to the usual universal family $\sX_{g,n} \to \sA_{g, n}$ with $g= \ell$ and $G= Sp_{2g}$. 

\smallskip We claim that the other cases are impossible. For this, it suffices to consider $m=1$. A simple inspection shows that the first two inequalities are impossible for $\ell\geq 5$ (and any $r$), and the last two ones are impossible for $\ell \geq 6$. \qed

\subsection{Proof of Theorem \ref{lbrk}.}  Theorem \ref{T8} reduces the proof of Theorem \ref{lbrk} to the  special case where {\it the Zariski-closure of the monodromy of the scheme $A\to S$ is the full symplectic group $Sp_{2g}$.} In this situation, Gao's Theorem  \ref{ThmGeomCriterionHodgeGeneric} shows that our Corollary \ref{cor1} applies, and this ends the proof of Theorem \ref{lbrk}.
\qed

   \subsection{An example.} Beyond the case $End\, = \Z$, even for abelian scheme with simple geometric generic fiber, the condition in Cor. \ref{cor2} may fail, so that for any section $\xi$, our method thus fails to  establish whether $\beta$ is generically a submersion.
   
   Here is an example.    Let $L^+$ be a real quadratic field and $L$ a totally imaginary quadratic extension of $L^+$.  Let $A\to S$ be a principally polarized abelian scheme of relative dimension $16$, with level $n\geq 3$ structure, complex multiplication by $\sO_L$, and Shimura type $(r_\nu, s_\nu) = (0,8), (4,4)$ (for the two embeddings $\nu_1, \nu_2$ of $F^+$ in $\C$), and level $n$ structure. In the universal case, the base is a Shimura variety of PEL type of dimension $\sum r_\nu \cdot s_\nu = 16$.  
   
     By functoriality, $\theta_A$ commutes with the $\sO_L$-action, hence respects the decomposition $\mathcal H^1_{dR}= \oplus_{\rho: L\to \C}\, \mathcal H^1_{dR \rho}$ and the restriction of $\theta_{A,\rho}$ to the summands for $\rho $ above $\nu_1$ satisfy $\theta_{A,\rho}=0$ since $rk\, \varOmega_{A } \cap \mathcal H^1_{dR \rho}= 0  $ or $8$. Therefore $\max_\omega {\rm{rk}}\,\theta_A^\omega \leq 8 < 16$.

\bigskip
\section{Appendix I: the real hyperelliptic case.}\label{real}

In the present Appendix we  illustrate  a result related to the Betti map for a certain section of the family of Jacobians of  hyperelliptic curves of given genus, restricting however  to curves and points over $\R$. This  leads to issues of different kind compared to the purely complex case, and we shall present a treatment quite independent of  the rest of the paper. 
 
The issues are relevant concerning the paper \cite{Rob} by R.M. Robinson, as was noted by Serre. An independent  proof  was given recently by B. Lawrence in \cite{L}. Our purpose is to give an alternative argument depending on the Betti map. 
 
 It is a pleasure to thank Professor Serre for several helpful  comments, in particular pointing out inaccuracies in previous versions. He also suggested the present result, more general than a former one. 
 
\subsection{The relevant family of curves and Jacobians}  For $s=(s_0,\ldots ,s_{2g+1})\in \C^{2g+2}$, let $f_s(x)=x^{2g+2}+s_{2g+1}x^{2g+1}+\ldots +s_1x+s_0$, let $\Delta(s)$ be its discriminant, and consider   the open set $S\subset \C^{2g+2}$ consisting of the points with $\Delta(s)\neq 0$.

 We  consider the family $\pi:\J\to S$  of  the Jacobians $J_s$, of curves $Y_s$, $s\in S$, where $Y_s$ is a smooth complete curve birationally equivalent to the affine plane curve defined by 
\begin{equation}
y^2=f_s(x). 
\end{equation}
Since $\Delta(s)\neq 0$, this affine  curve is smooth, and $Y_s$ is obtained by adding  the two poles of the function $x$. We denote them by $\infty^\pm$, where the sign can be specified e.g. by stipulating that $y-x^{g+1}$ has a pole at $\infty^+$ of order $\le g$ (hence a pole at $\infty^-$ of order precisely $g+1$).  

The curve $Y_s$ has genus $g=\dim J_s$. We  consider the section $\sigma$ (of $\pi$) on $S$ to $\J$ such that $\sigma(s)= $ class of $[\infty^+]-[\infty^-]$ in $J_s$, for $s\in S$. For real $s\in S\cap \R^{2g+2}$, the points $\infty^\pm$ are defined over $\R$ and so is $\sigma(s)$.   
 
We want to present a proof of   the following

\begin{thm}\label{T.1}
The set of real $s\in S\cap\R^{2g+2}$ such that $\sigma(s)$ is a torsion point of $J_s$ is dense in $S\cap \R^{2g+2}$ (for the Euclidean topology).
\end{thm}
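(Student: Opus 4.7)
The plan is to associate to $(A \to S, \sigma)$ a real Betti map $\beta_\R : \tilde S(\R) \to \R^g/\Z^g$ whose rational values detect torsion values of $\sigma$, and to prove that $\beta_\R$ is generically a submersion. Concretely, for $s \in S(\R)$ the Jacobian $J_s$ is defined over $\R$; complex conjugation acts on $H_1(J_s,\Z)$ and on $Lie\, J_s$, and $(Lie\, J_s)(\R)$ is a real form of dimension $g$ containing the sublattice $\Lambda^+_s := H_1(J_s,\Z) \cap (Lie\, J_s)(\R)$, of rank $g$ after possibly passing to a connected etale cover of $S(\R)$ on which this rank is maximal. The neutral component $J_s(\R)^0$ is identified with $(Lie\, J_s)(\R)/\Lambda^+_s$; since $\sigma(s) \in J_s(\R)$, a fixed integer multiple $N\sigma(s)$ lies in $J_s(\R)^0$, and one expresses $N\sigma(s)$ in a locally constant $\Z$-basis of $\Lambda^+_s$ to get $g$ real coordinates $\beta_\R(s) \in \R^g/\Z^g$. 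Then $\sigma(s)$ is torsion iff $\beta_\R(s) \in \Q^g/\Z^g$, so by the constant rank theorem and density of $\Q^g/\Z^g$ in $\R^g/\Z^g$, it suffices to show that $\beta_\R$ has rank $g$ on a dense open subset of $\tilde S(\R)$.

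To establish submersivity of $\beta_\R$, I first prove that the complex Betti map $\beta : \tilde S \to \R^{2g}$ of the family $(A/S,\sigma)$ is generically a submersion of rank $2g$, via Corollary \ref{cor2} applied to the $1$-motive $[\Z \xrightarrow{1\mapsto\sigma} A]$. This requires an explicit Kodaira-Spencer computation analogous to that of Proposition \ref{T.explicitKS}, but in the even-degree model $y^2 = f_s(x)$ with $\deg f_s = 2g+2$ and for the specific section $\sigma = [\infty^+] - [\infty^-]$. Following the same method, one lifts $\partial/\partial s_i$ locally to derivations on the total space, computes the resulting cocycles, and evaluates residues at the points $\infty^\pm$ against the quadratic differentials $x^j \, dx^2/y^2$ (for the Jacobian part) and against the variation of the Abel-Jacobi image of $\infty^+$ relative to $\infty^-$ (for the section part). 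The Jacobian part produces a Vandermonde-type block in the roots of $f_s$; adjoining the section row and contracting by a suitable $\varpi \in (gr^0)^\vee$ yields a matrix with non-vanishing $g \times g$-minor, which gives $\mathrm{rk}\,\theta^\varpi_\sigma \geq g$ generically, hence $\mathrm{rk}\,\beta \geq 2g$.

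To transfer submersivity from $\beta$ to $\beta_\R$, I use that at $s \in \tilde S(\R)$ the antiholomorphic involution $c$ on $T_s \tilde S$ (with $+1$-eigenspace $T_s \tilde S(\R)$) and the $\R$-linear involution on $\R^{2g}$ induced by complex conjugation on $H_1(J_s,\Z)$ are intertwined by $d\beta(s)$, because $\beta$ is real-valued and $(A,\sigma)$ are defined over $\R$. Choosing a $\Z$-basis of $H_1(J_s,\Z)$ adapted to this involution yields a decomposition $\R^{2g} = \mathcal R^+ \oplus \mathcal R^-$ with $\dim_\R \mathcal R^\pm = g$, and the computation $\beta = (\Lambda,\overline\Lambda)(\Omega,\overline\Omega)^{-1}$ of \eqref{e2} shows that $\beta(s) \in \mathcal R^+$ at real $s$, so $\beta_\R$ is the composition of $\beta|_{\tilde S(\R)}$ with projection to $\mathcal R^+$. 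Splitting $d\beta(s) = d\beta^+(s) \oplus d\beta^-(s)$ accordingly, the identity $\mathrm{rk}\,d\beta = \mathrm{rk}\,d\beta^+ + \mathrm{rk}\,d\beta^-$ combined with the bounds $\mathrm{rk}\,d\beta^\pm \leq g$ forces $\mathrm{rk}\,d\beta^+ = g$ at any $s$ where $\beta$ is submersive; in particular $\beta_\R$ is submersive there. Since the complex submersive locus is Zariski-open in $S$ and defined over $\R$, it meets $S(\R)$ in a Euclidean-dense open subset, yielding generic submersivity of $\beta_\R$ and the density statement.

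The hard part will be the Kodaira-Spencer computation in the middle paragraph for the specific section $\sigma = [\infty^+]-[\infty^-]$. The Jacobian-part Vandermonde block is a direct variant of Proposition \ref{T.explicitKS}, but the section part requires a new residue calculation at $\infty^\pm$ describing how the local uniformizer $\xi = x^g/y$ (and the Abel-Jacobi integrals $\int_{\infty^-}^{\infty^+} \omega_j$) vary with the coefficients $s_i$ of $f_s$. The outcome must be combined with the Jacobian contribution to verify that a suitable $g \times g$-minor of the contracted Kodaira-Spencer matrix is non-zero, which should follow from a Vandermonde-type non-degeneracy at generic real parameters.
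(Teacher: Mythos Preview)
Your approach has a genuine gap in the complex-to-real transfer step. You assert that ``the complex submersive locus is Zariski-open in $S$ and defined over $\R$'', but this is not justified and is the crux of the matter. The rank of $d\beta$ at $\tilde s$ is governed by the minors of $I_{-\beta(\tilde s)}(\tilde s)$ (equation~\eqref{e7}), which are real-analytic, not algebraic, functions on $S$. Equivalently, by \eqref{e30} one has $\mathrm{rk}\,d\beta(\tilde s) = 2\,\mathrm{rk}\,\theta_\xi^{\varpi(\tilde s)}(s)$ for a \emph{specific transcendental} parameter $\varpi(\tilde s)$ built from periods and abelian logarithms; the Zariski-open locus where $\theta_\xi^\varpi$ has rank $g$ for \emph{some} algebraic $\varpi$ is a different set. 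So even if the generic complex rank is $2g$ (which your Kodaira--Spencer sketch, essentially Theorem~\ref{hyp} adapted to the even-degree model, would indeed yield), the non-submersive locus of $\beta$ is only a proper real-analytic subset of $S(\C)$, and nothing prevents it from containing all of $S(\R)$, which has real codimension $2g+2$ in $S(\C)$. Your equivariance argument correctly shows that \emph{if} $\beta$ is submersive at a real point then $\beta_\R$ is too, but it does not produce such a point.

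The paper's proof takes a completely different and self-contained route that avoids this transfer problem. It argues by contradiction on the real Betti map directly: if $d\beta_\R$ had rank $<g$ everywhere on some $U_\alpha \cap \R^{2g+2}$, then the \emph{complex} fibers of $\beta$ through real points---which are complex-analytic subvarieties of $U_\alpha$---would have complex dimension $>g+2$, and this propagates to the whole real connected component (Propositions~\ref{P.1} and \ref{P.2}). One then exhibits, in every connected component of $S\cap\R^{2g+2}$, an explicit real point $s$ with $f_s(x)=P(x)^2-p$ for some monic $P\in\R[x]$ of degree $g+1$ and $p\neq 0$; there $\sigma(s)$ is torsion of order $g+1$, and the torsion fiber through $s$ is parametrized by such Pell-type data, hence has complex dimension $\le g+2$, a contradiction. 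No Kodaira--Spencer computation and no result about the generic rank of the complex Betti map are needed.
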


A main issue here is that we are concerned with {\it real points}  of the base $S$. The argument below 
uses also information coming from  complex points, and in this transition  {\it real -   complex} some attention has to be adopted.
For clarity we  recall again the Betti map for this case.

\subsection{Periods, abelian logarithms and the Betti map}   
There is a locally finite covering of $S$ 
by  
open  polydisks  $U_\alpha\subset S$, $\alpha\in I$,  
such that the holomorphic vector bundle over $S$ given by  the tangent  spaces to the  $J_s$ 
at the origin becomes  holomorphically equivalent to $U_\alpha\times \C^g$ over each $U_\alpha$.  Through this trivialization, for each $\alpha$ there are column vectors of analytic functions   $\omega_{1\alpha},\ldots ,\omega_{2g\alpha}:U_\alpha\to \C^g$ with the property   that for $s\in U_\alpha$ their values at $s$ form a basis for a lattice $\Lambda_{\alpha,s}\subset \C^g$ such that  the torus $T_s=\C^g/\Lambda_{\alpha,s}\cong J_s$  analytically  (through an exponential map $\exp_{\alpha, s}$). 


Note that  $U_\alpha\cap \R^{2g+2}$ is connected for each $\alpha\in I$ and  $U_\alpha\cap U_\beta$ is connected for any $\alpha,\beta\in I$ (if only because the $U_\alpha$ are convex).

\subsubsection{Subtori of real points.} 
For real $s\in U_\alpha\cap \R^{2g+2}$, the   lattice $\Lambda_{\alpha,s}$  has a sublattice $\Lambda'_{\alpha,s}$ of rank $g$, spanning over $\R$ a   vector space $V_{\alpha,s}:=\R\Lambda'_{\alpha,s}$ of dimension $g$, such that the connected component of the identity in the group  $J_s(\R)$ of real points of $J_s$  
corresponds to $V_{\alpha,s}/\Lambda'_{\alpha,s}$.  It is   known that   $J_s(\R)$  is  a finite union of translates, by torsion points  of order $2$, of this component; 
see e.g.   Prop. 1.1 in the paper \cite{GH} by B. Gross and J. Harris.

It is not difficult to see that we may also choose the above bases so  that $\omega_{1\alpha}(s),\ldots ,\omega_{g\alpha}(s)$ span $V_{\alpha,s}$ over $\R$  for $s\in U_\alpha\cap \R^{2g+2}$.

\subsubsection{Transition functions.} Let $\omega_\alpha$ denote the $g\times 2g$ matrix whose columns are the $\omega_{i\alpha}$. Then on the intersection $U_\alpha\cap U_\beta$ of two of the above open sets we have a transition expressed by
\begin{equation}\label{E.transition}
\omega_{\beta}=L_{\beta\alpha}\cdot \omega_\alpha \cdot R_{\alpha\beta},
\end{equation}
where $L_{\beta\alpha}\in GL_g(\sO_{U_\alpha\cap U_\beta})$  expresses  a change of basis of $\C^g$, and where $R_{\alpha\beta}\in GL_{2g}(\Z)$  expresses a change of basis for the lattice $\Lambda_{\alpha,s}$. 

\subsubsection{Abelian logarithms.} Since  the $U_{\alpha}$  are simply connected, 
an abelian logarithm $\lambda_\alpha$ of $\sigma$ may be defined on each  $U_\alpha$ as an analytic function to $\C^g$. It satisfies $\exp_{\alpha,s}\lambda_\alpha(s)=\sigma(s)$. For $s\in U_\alpha$, the value  $\lambda_\alpha(s)$ is uniquely determined up to a  vector in $\Lambda_{\alpha,s}=\omega_\alpha(s)\Z^{2g}$.
On an intersection $U_\alpha\cap U_\beta$ as above, such uniqueness and the above  transformations \eqref{E.transition} imply  
\begin{equation}\label{E.transition2}
\lambda_{\beta}=L_{\beta\alpha}\lambda_\alpha+ \omega_\beta v_{\alpha\beta},
\end{equation}
for suitable  integer column vectors $v_{\alpha\beta}\in \Z^{2g}$  (constant in $U_\alpha\cap U_\beta$).

\subsubsection{Betti coordinates and a restricted Betti map.}    For  $s\in U_\alpha$ 
we may write uniquely 
\begin{equation}\label{E.B}
\lambda_\alpha(s)=\sum_{i=1}^{2g}B_{i\alpha}(s)\omega_{i\alpha}(s)=:\omega_\alpha(s)B_\alpha(s),
\end{equation}
for  real numbers $B_{i\alpha}(s)$ (and a column vector $B_\alpha(s)\in\R^{2g}$). On taking complex conjugates, we see that this gives rise to  real functions   $B_{i\alpha}$ on $U_\alpha$ 
which are real-analytic in the variable $s\in U_\alpha\subset  \C^{2g+2}\cong\R^{4g+4}$. 
The above transition transformations \eqref{E.transition}, \eqref{E.transition2} yield,  on $U_\alpha\cap U_\beta$, 
\begin{equation}\label{E.B2}
B_\beta= R_{\alpha\beta}^{-1}B_\alpha +v_{\alpha\beta}.
\end{equation}

By definition  we have the (real-analytic) Betti map 
$$
B_\alpha:U_\alpha\to \R^{2g},\qquad B_\alpha(s)= \ ^t(B_{1,\alpha}(s),\ldots ,B_{2g,\alpha}(s)).
$$

\medskip

Since for real $s\in A\cap \R^{2g+2}$ the   points  $\infty^\pm$  lie in $Y_s(\R)$, the values $\lambda_\alpha(s)$  of $\lambda_\alpha$ on $U_\alpha\cap \R^{2g+2}$ lie, modulo $\Lambda_{\alpha,s}$, in one of the above mentioned finitely many translates of $V_{\alpha,s}$.  


 Now,  recalling that the translates in question are obtained by torsion points of order $2$,  
 this yields that  
 $B_\alpha(U_\alpha\cap \R^{2g+2})$  is inside $\R^g\times \{0\}+{1\over 2}\Z^{2g}$,  
 due to the present choice of $\omega_{1\alpha},\ldots ,\omega_{g\alpha}$. 
 By continuity and connectedness, the last $g$ coordinates of  the map are constant on $U_\alpha\cap \R^{2g+2}$.

 Then let us  now 
denote by $B_{\R\alpha}: U_\alpha\cap\R^{2g+2}\to \R^g$  the projection to the first $g$ coordinates of the map $B_\alpha$ restricted to real points; this is still real-analytic.

This really is meaningful only if the relevant set is non-empty, so, also for later reference, we define  $I_0\subset I$ as the subset of  the $\alpha\in I$ such that $U_\alpha\cap \R^{2g+2}$ is non-empty.

 \subsection{The differential of the restricted Betti map}  Let $s\in U_\alpha$; then   the value $\sigma(s)$ is torsion on $J_s$ if and only if  $B_\alpha(s)\in \Q^{2g}$. Then, to prove Theorem \ref{T.1}  it will suffice to prove
  \begin{thm}\label{T.2}
 For each $\alpha\in I_0$ the map $B_{\R\alpha}$  on  $U_\alpha\cap\R^{2g+2}$   
 attains rational values on a dense subset of $U_\alpha\cap \R^{2g+2}$. 
\end{thm}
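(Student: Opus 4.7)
I will show that $B_{\R\alpha}$ is a submersion $U_\alpha\cap\R^{2g+2}\to\R^g$ on a dense open subset. Together with density of $\Q^g\subset\R^g$ and real-analyticity of $B_{\R\alpha}$, this gives density of $B_{\R\alpha}^{-1}(\Q^g)$ in $U_\alpha\cap\R^{2g+2}$, which is the statement of Theorem~\ref{T.2}.

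\textbf{Step 1 (computation of $dB_{\R\alpha}$ at a real point).} Choose, as in the appendix, the cycle basis $\gamma_1,\dots,\gamma_{2g}$ so that $\omega_{1\alpha}(s),\dots,\omega_{g\alpha}(s)$ span $V_{\alpha,s}$ over $\R$ at every real $s$, and the remaining $\gamma_{g+1},\dots,\gamma_{2g}$ are anti-real; together with a real basis of fibre $1$-forms this makes the first-$g$ period block $\Omega_1$ real invertible and the last-$g$ block purely imaginary on the real locus, and the last $g$ Betti coordinates $B_2$ locally constant in $\tfrac12\Z^g$. Starting from \eqref{e4}--\eqref{e7} and taking real parts at a real point $s_0$ (the contribution of the locally constant $B_2$-term is purely imaginary and drops out), one finds
$$\operatorname{rk}\,dB_{\R\alpha}(s_0) \;=\; \operatorname{rk}_\R\,\operatorname{Re}\bigl(I_{-\beta(s_0)}(s_0)\bigr).$$
Via the identifications \eqref{e20}--\eqref{e23}, $\operatorname{Re}(I_{-\beta(s_0)}(s_0))$ is, up to invertible real change of basis, the matrix of a real contracted Kodaira--Spencer map $\theta_\xi^\varpi(s_0)$ for a real $\varpi=\varpi(s_0)$. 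We are therefore reduced to showing that this real contracted Kodaira--Spencer attains rank $g$ at some real $s_0$.

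\textbf{Step 2 (rank $g$ via Proposition~\ref{T.explicitKS}).} The proof of Theorem~\ref{hyp} produces, via the explicit computation of Proposition~\ref{T.explicitKS} (immediately generalizable to the degree-$(2g+2)$ family at hand), a real section $\omega\in\Gamma\,\Omega_{A}$ such that $\theta_A^\omega$ has rank $g$ at a generic point of $S$; the required $g\times g$ minor is a nonzero polynomial in the coefficients $s_0,\dots,s_{2g+1}$, explicitly (up to a nonzero multiplicative factor) a Vandermonde determinant in the roots of $f_s$. A nonzero polynomial in $2g+2$ complex variables does not vanish identically on $\R^{2g+2}$, so $\theta_A^\omega$ has real rank $g$ on a dense open subset of $U_\alpha\cap\R^{2g+2}$. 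Combined with Step 1, this proves that $dB_{\R\alpha}$ has rank $g$ on a dense open subset, and the theorem follows.

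\textbf{Main obstacle.} The delicate point is Step 1: because $\sigma(s)$ need not lie in the identity component $J_s(\R)^0$, the auxiliary Betti coordinates $B_2$ are nonzero in general, and one must check carefully that their locally constant contribution decouples from the real-derivative computation, in the end only affecting the vanishing imaginary part of $\partial_i^{\mathrm{real}}B_\alpha$. The decoupling relies on the real/anti-real splitting $\Omega=\binom{\Omega_1}{i\Omega_2'}$ made possible by the cycle basis adapted to the real structure, which is available because $\sigma$ is defined over $\R$ and the real structure on $J_s$ is as in \cite{GH}.
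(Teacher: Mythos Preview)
Your approach differs substantially from the paper's and contains a genuine gap between Steps~1 and~2.

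In Step~1 you reduce to showing that $\theta_\xi^{\varpi(s_0)}(s_0)$ has rank $g$, where $\varpi(s_0)$ is the \emph{specific} element of $(gr^0)^\vee$ determined by the Betti coordinates of $s_0$ via \eqref{e30}. In Step~2, however, you only produce \emph{some other} $\omega\in\Gamma\Omega_A$ for which $\theta_A^\omega$ has generic rank $g$. These do not match: the $\varpi(s_0)$ of Step~1 is forced on you by $s_0$ (and has $\mu_0=1$ in the notation of \eqref{e27}), whereas the $\omega$ of Step~2 is freely chosen (and corresponds to $\mu_0=0$). Bridging precisely this gap in the complex setting is the entire content of Theorem~\ref{rk+}, whose proof relies on Andr\'e's relative monodromy theorem \cite[Th.~3]{A1} together with the separation trick of \S\ref{sepv}--\ref{mono}. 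You do not explain how that argument survives restriction to the real locus $U_\alpha\cap\R^{2g+2}$, and there is no evident reason it should: letting two copies of $\pi_1(S)$ act independently on holomorphic and antiholomorphic factors has no counterpart once both are constrained to be real.

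The paper's own argument (Appendix~I is declared independent of the main body) avoids Kodaira--Spencer entirely. It argues by contradiction on fiber dimension: if $dB_{\R\alpha}$ had rank $<g$ everywhere on $U_\alpha\cap\R^{2g+2}$, then Propositions~\ref{P.1}--\ref{P.2} force the \emph{complex} fibers $B_\gamma^{-1}(B_\gamma(u))$ to have complex dimension $>g+2$ throughout the connected component. This is contradicted at an explicit Pellian point $s$ with $f_s(x)=P(x)^2-p$, where $\sigma(s)$ is torsion of order $g+1$ and the torsion fiber depends on at most $g+2$ parameters (the $g+1$ coefficients of $P$ together with $p$). The argument is elementary and self-contained; your route, even if repairable, would import the full strength of Theorem~\ref{rk+} and still leave the real-restriction step (your ``Main obstacle'') to be carried out rigorously.
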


For this,  the first thing we want to  show  is that its differential has maximal rank $g$ at some real point $s\in U_\alpha\cap \R^{2g+2}$ in case this set is non-empty, and in turn we shall prove this by comparison with the dimension of the fibers.  We start with:

\begin{prop}\label{P.1}  Suppose that for a certain $\alpha\in I_0$ and every point $s\in U_\alpha\cap \R^{2g+2}$  the differential of $B_{\R\alpha}$ has  rank $<g$ at $s$. Then,  for  all points $u\in U_\alpha\cap \R^{2g+2}$ the fiber  $B_\alpha^{-1}(B_\alpha(u))\subset U_\alpha$ is a complex variety in $U_\alpha$ of  complex dimension $>g+2$.
\end{prop}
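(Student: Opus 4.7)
The plan is to exhibit the fibers $V_u := B_\alpha^{-1}(B_\alpha(u))$ as a complex-analytic family over a complex parameter space, establish the bound $\dim_\C V_u \geq g+3$ at generic real $u$ via a real-geometric argument, and then extend it to every real $u$ by upper semicontinuity of fiber dimension.

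First, by the defining relation $\Lambda = \beta\,\Omega$ of the Betti coordinates (formula \eqref{E.B}), the fiber $V_u$ is cut out in $U_\alpha$ by the $g$ holomorphic equations $h_j(s;b) := \Lambda_j(s) - \sum_k b_k\,\Omega_{kj}(s) = 0$ for $j=1,\ldots,g$, where $b = B_\alpha(u) \in \R^{2g}$. Since the $h_j$ are $\C$-linear in $b$, the same formulas define a complex-analytic family $\{V_b^\C\}_{b \in \C^{2g}}$ of subvarieties of $U_\alpha$, with $V_b^\C = V_b$ when $b$ is real. In particular $V_u$ is a complex-analytic subvariety of the $(2g+2)$-dimensional manifold $U_\alpha$, of complex dimension at least $g+2$ at every point; our goal is to upgrade this to strict inequality.

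Second, the hypothesis that $\mathrm{rk}\,dB_{\R\alpha} < g$ on $U_\alpha\cap\R^{2g+2}$ forces the generic rank of $B_{\R\alpha}$ on this real locus to be some $r \leq g-1$. The constant rank theorem, applied on the dense open subset where this rank is attained, shows that the generic real fiber $F_{u'} = V_{u'}\cap\R^{2g+2}$ is a real-analytic submanifold of real dimension $(2g+2) - r \geq g+3$. Combined with the elementary inequality $\dim_\R (V\cap\R^N) \leq \dim_\C V$ for any complex-analytic variety $V \subset \C^N$ (which follows from $(T_pV\cap\R^N)\otimes_\R \C \subseteq T_pV \cap \overline{T_pV}$ at any real smooth point $p$), this forces $\dim_\C V_{u'} \geq g+3$ for every generic $u'$.

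Third, I would extend this from the generic case to every real $u$ by upper semicontinuity of complex fiber dimension applied to the complex-analytic family $\{V_b^\C\}$. The total space $W^\C := \{(s,b) \in U_\alpha \times \C^{2g} : h_j(s;b)=0,\ \forall j\}$ is cut out by $g$ jointly-holomorphic equations, so Remmert's theorem tells us the projection $W^\C \to \C^{2g}$ has upper semicontinuous fiber dimension; in particular, $\{b \in \C^{2g} : \dim_\C V_b^\C \geq g+3\}$ is closed analytic in $\C^{2g}$. By the previous step this set contains the image under $B_\alpha$ of the generic real locus, which is (Euclidean-)dense in the image $B_\alpha(U_\alpha\cap\R^{2g+2}) \subset \R^{2g}$; being closed in $\C^{2g}$, it must contain the entire image. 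Therefore $\dim_\C V_u = \dim_\C V_{B_\alpha(u)}^\C \geq g+3 > g+2$ for every $u\in U_\alpha\cap\R^{2g+2}$, as required.

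The main obstacle is the delicate passage between real and complex: although $b = B_\alpha(u)$ is only a \emph{real}-analytic function of $u$, the defining equations are $\C$-linear in $b$, and it is this linearity that allows one to complexify the parameter and apply the standard complex-analytic upper-semicontinuity machinery; the real-geometric argument of step two must then be seen as producing the required bound at a dense subset of the real image, which the complex argument promotes to all $b$ in its Euclidean closure.
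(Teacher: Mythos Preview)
Your outline matches the paper's: (i) the rank hypothesis forces the real fibers of $B_{\R\alpha}$ to have real dimension $\ge g+3$ on a dense open subset of $U_\alpha\cap\R^{2g+2}$; (ii) since a complex-analytic set that contains a real-analytic subset of real dimension $m$ must have complex dimension $\ge m$, the complex fibers $V_{u'}$ have complex dimension $\ge g+3$ for generic real $u'$; (iii) a semicontinuity argument extends this to every real $u$. Steps (i) and (ii) are correct and essentially identical to the paper's.

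The gap is in step (iii). You claim that $\{b\in\C^{2g}:\dim_\C V_b^\C\ge g+3\}$ is closed in $\C^{2g}$, \ie that fiber dimension is upper semicontinuous in the \emph{base point}. That would follow from Remmert's proper mapping theorem if the projection $W^\C\to\C^{2g}$ were proper, but it is not: $U_\alpha$ is an open polydisk. Already for $\{(z,b)\in D\times\C:z=b\}\to\C$ with $D$ the open unit disk, the set $\{b:\dim V_b\ge 0\}=D$ is open, not closed. Nothing in your setup prevents a high-dimensional component of $V_{b'}^\C$ from drifting out to $\partial U_\alpha$ as $b'\to b$.

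The fix is to run semicontinuity on the \emph{source}, which is the standard statement and is precisely what the paper invokes (citing \cite{CMZ}): the set $\{s\in U_\alpha:\dim_s B_\alpha^{-1}(B_\alpha(s))\ge k\}$ is closed in $U_\alpha$. Your complexification is in fact the natural way to \emph{prove} this. The set $E:=\{(s,b)\in W^\C:\dim_s V_b^\C\ge g+3\}$ is closed analytic in $W^\C$ by ordinary (source-side) upper semicontinuity of fiber dimension; your step (ii) actually yields the \emph{local} bound $\dim_{u'}V_{u'}\ge g+3$ at each generic real $u'$ (the real fiber is a submanifold of that dimension through $u'$), so the continuous section $u\mapsto(u,B_\alpha(u))$ of $U_\alpha\cap\R^{2g+2}$ into $W^\C$ lands in $E$ on a dense set, hence everywhere. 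Thus your complexification idea is right, just applied on the wrong space.
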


\begin{proof}  
The assumption implies, through the (real) implicit function theorem,  that   at all points $u$  in an open dense subset of  $U_\alpha\cap \R^{2g+2}$ the    fiber $B_{\R\alpha}^{-1}(B_{\R\alpha}(u))$ of $B_{\R\alpha}$ on $U_\alpha\cap \R^{2g+2}$  has  real dimension $>g+2$. So, the complex fiber $B_\alpha^{-1}(B_\alpha(u))\subset U_\alpha$ at such  real points  $u$ 
has  a real subset (that is, a subset inside $U_\alpha\cap \R^{2g+2}$) of real dimension $>g+2$, since  this complex fiber  contains   the respective fiber   $B_{\R\alpha}^{-1}(B_{\R\alpha}(u))\subset U_\alpha\cap \R^{2g+2}$.  
 
 But the fibers of $B_\alpha$  (on $U_\alpha$) are  complex varieties inside $U_\alpha$; in fact, the fiber  $B_\alpha^{-1}(b_1,\ldots ,b_{2g})$ is defined in $U_\alpha$  by the complex-analytic equation $\lambda_\alpha(z)=\sum_{i=1}^{2g}b_i\omega_{i\alpha}(z)$. We deduce that  {\it the complex dimension at $u$ of the   fiber   $B_{\alpha}^{-1}(B_\alpha(u))$ (as a subset of $U_\alpha$)  is $>g+2$ for all $u$ in a suitable open dense subset of  $U_\alpha\cap \R^{2g+2}$}.

The sought conclusion of the Proposition now  follows immediately  from the   following result, used also in the recent paper  \cite{CMZ}, to which we refer for a proof:

{\it For every $k\in\N$, the set $\{s\in U_\alpha: \dim_s B_\alpha^{-1}(B_\alpha(s))\ge k\}$ is   closed in $U_\alpha$.}
\end{proof}



Next, we want to extend  the conclusion of the proposition (keeping the same assumption) to a whole connected component of $S\cap \R^{2g+2}$, namely proving the following sharpening:

\begin{prop} \label{P.2} Suppose that for some $\alpha\in I_0$ and every point $s\in U_\alpha\cap \R^{2g+2}$  the differential of $B_{\R\alpha}$ has  rank $<g$ at $s$. Then  for every $u\in A\cap \R^{2g+2}$   lying in the same connected component of $U_\alpha\cap \R^{2g+2}$,  and for every $\gamma$ such that $u\in U_\gamma$, the  fiber $B_\gamma^{-1}(B_\gamma(u))\subset S$ has complex dimension $>g+2$ at $u$.
\end{prop}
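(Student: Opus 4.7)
\textbf{Proof plan for Proposition \ref{P.2}.} The idea is to propagate the rank-$<g$ hypothesis from $U_\alpha$ to any chart $U_\gamma$ meeting the same connected component of $S\cap\R^{2g+2}$, and then apply Proposition \ref{P.1} inside $U_\gamma$. Three preliminary observations set the stage. (O1) \emph{Chart-intrinsicness of fibers.} By \eqref{E.transition} and \eqref{E.B2}, $B_\beta=R_{\alpha\beta}^{-1}B_\alpha+v_{\alpha\beta}$ on $U_\alpha\cap U_\beta$ with $R_{\alpha\beta}\in GL_{2g}(\Z)$ and $v_{\alpha\beta}\in\Z^{2g}$ constant (by connectedness of the overlap). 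Consequently the complex-analytic fibers of $B_\alpha$ and $B_\beta$ coincide on the overlap, and the function $\phi(s):=\dim_s B_\gamma^{-1}(B_\gamma(s))$ is well defined on $S$, independent of the chart $\gamma\ni s$. (O2) \emph{Preservation of rank on overlaps.} Since $R_{\alpha\beta}^{-1}$ is an invertible integer matrix, the real Jacobians of $B_\alpha$ and $B_\beta$ have the same rank at each point of $U_\alpha\cap U_\beta\cap\R^{2g+2}$; on the other hand, on real points the last $g$ Betti coordinates of each $B_\alpha$ are constant on the connected real chart and so contribute nothing to the Jacobian rank; hence the rank of $B_{\R\alpha}:U_\alpha\cap\R^{2g+2}\to\R^g$ agrees at each point of the overlap with the rank of $B_{\R\beta}$. (O3) \emph{Real-analytic continuation.} The condition ``rank $<g$'' is the simultaneous vanishing of the $g\times g$ minors of the Jacobian of $B_{\R\beta}$, which are real-analytic on the connected real-analytic manifold $U_\beta\cap\R^{2g+2}$; their vanishing on any non-empty open subset propagates identically.

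\smallskip

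\emph{Propagation along a chain.} Let $u\in S\cap\R^{2g+2}$ lie in the connected component $C$ of $S\cap\R^{2g+2}$ containing $U_\alpha\cap\R^{2g+2}$, and fix $\gamma\in I$ with $u\in U_\gamma$. Choose a path $\pi:[0,1]\to C$ from some $u_0\in U_\alpha\cap\R^{2g+2}$ to $u$. By local finiteness of $\{U_\beta\}_{\beta\in I}$ the compact image $\pi([0,1])$ is covered by finitely many charts, which after reordering form a chain $U_{\alpha_0}=U_\alpha,\,U_{\alpha_1},\ldots,U_{\alpha_n}=U_\gamma$ in which $U_{\alpha_i}\cap U_{\alpha_{i+1}}\cap\R^{2g+2}\neq\emptyset$ for every $0\leq i<n$ (a transition point of the path between consecutive charts provides a real element of the overlap). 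I claim by induction on $i$ that $B_{\R\alpha_i}$ has rank $<g$ everywhere on $U_{\alpha_i}\cap\R^{2g+2}$. The case $i=0$ is the hypothesis. For the inductive step, (O2) transfers the rank bound from $B_{\R\alpha_i}$ to $B_{\R\alpha_{i+1}}$ on the non-empty open overlap $U_{\alpha_i}\cap U_{\alpha_{i+1}}\cap\R^{2g+2}$, and (O3) then extends it to all of $U_{\alpha_{i+1}}\cap\R^{2g+2}$, which is connected.

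\smallskip

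\emph{Conclusion.} Applied to the chart $U_\gamma=U_{\alpha_n}$, the induction gives that $B_{\R\gamma}$ has rank $<g$ everywhere on $U_\gamma\cap\R^{2g+2}$. Proposition \ref{P.1} in the chart $U_\gamma$ then yields $\dim_u B_\gamma^{-1}(B_\gamma(u))>g+2$, and by (O1) this holds for every chart containing $u$. The main technical point is (O2): one must verify that even though the integer transition matrix $R_{\alpha\beta}^{-1}$ could in principle mix the first and last $g$ Betti coordinates, the constancy of the last $g$ coordinates of $B_\alpha$ on each connected real locus forces the rank of the $g$-dimensional restricted map to be chart-intrinsic on overlaps, so that the rank hypothesis genuinely propagates along the chain without losing information when passing from the full $2g$-dimensional Betti map to its restricted $g$-dimensional version.
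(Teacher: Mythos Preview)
Your proof is correct and follows essentially the same strategy as the paper's: propagate the rank-$<g$ hypothesis across overlapping real charts using the transition formula \eqref{E.B2} together with real-analyticity, and then apply Proposition~\ref{P.1} in the target chart. The only cosmetic differences are that the paper packages the propagation as an ``open and closed'' argument rather than a chain-of-charts argument, and that it invokes a direct affine relation $B_{\R\delta}=\Gamma_{\gamma\delta}B_{\R\gamma}+\tfrac{1}{2}\tilde v_{\gamma\delta}$ between the restricted maps, whereas your (O2) deduces equality of ranks more intrinsically via the full Betti map and the constancy of its last $g$ coordinates on real points.
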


\begin{proof} 

 Let $I_0'$ be the subset of indices $\beta\in I_0$ such that the differential of $B_{\R\beta}$ has rank $<g$ everywhere on $U_\beta\cap\R^{2g+2}$. We contend that  the union $\bigcup_{\beta\in I_0'}(U_\beta\cap \R^{2g+2})$ is open and closed in $A\cap \R^{2g+2}$, hence a union of connected components of $S\cap\R^{2g+2}$.


Now, this union is open since every $U_\alpha$ is open. To prove it is closed in $S\cap \R^{2g+2}$,   let $\gamma\in I_0'$ and let $\delta\in I_0$ be such that $U_\gamma\cap U_\delta\cap \R^{2g+2}$ is non-empty.  

Note that the transformation \eqref{E.B2} (and our convention about the choice and orderings of the bases $\omega_\gamma, \omega_\delta$) yields that  $B_{\R\delta}=\Gamma_{\gamma\delta}B_{\R\gamma}+{1\over 2}\tilde v_{\gamma\delta}$ on  $U_\gamma\cap U_\delta\cap \R^{2g+2}$, for a suitable  integral $g\times g$ matrix $\Gamma_{\gamma\delta}$ 
and a suitable $\tilde v_{\gamma\delta}\in\Z^g$.  

Then the  differential of $B_{\R\delta}$  shall have rank $<g$ on  $U_\gamma\cap U_\delta\cap \R^{2g+2}$, hence on the whole $U_\delta\cap \R^{2g+2}$, for the former set is non-empty and open in the latter,   the latter is connected and the relevant map is real-analytic.  In other words,  $\delta\in I_0'$ too.  

The closure of  $\bigcup_{\beta\in I_0'}(U_\beta\cap \R^{2g+2})$  follows: let  $s\in A\cap\R^{2g+2}$ lie  in the complement (with respect to $S\cap\R^{2g+2}$); this  $s$ lies in some $U_\eta\cap \R^{2g+2}$ and then, by what we have seen, $U_\eta\cap\R^{2g+2}$ shall be disjoint from our set. Therefore the complement of our set is open, as asserted.

\medskip

It follows that if  
$\alpha\in I_0'$, then $\gamma\in I_0'$ for every $\gamma\in I_0$ such that $U_\gamma\cap \R^{2g+2}$ is in the same connected component of $U_\alpha\cap \R^{2g+2}$ (relative to $S\cap \R^{2g+2}$).

Now, Proposition \ref{P.2} follows on applying   Proposition \ref{P.1} to each such $\gamma$. 
\end{proof}


\subsection{Contradicting the conclusion of Proposition \ref{P.2}} To contradict the conclusion of this proposition, hence proving that  its assumption cannot  hold, let us first inspect the connected components of $S\cap \R^{2g+2}$. Each point  $s\in A\cap \R^{2g+2}$ corresponds to a real monic polynomial  of degree $2g+2$ with no multiple complex root.  Let $2r(s)$ be the number of real roots, so $r(s)$ is an integer, $0\le r(s)\le g+1$. 
It is   easy to see that $r(s)$ is locally constant in $S\cap \R^{2g+2}$   and then it readily follows   that each (non-empty) connected component is defined in  $S\cap \R^{2g+2}$ by an equation $r(s)=r$, where $r$ is a given integer in $[0,g+1]$.  

We now have the following elementary  lemma, whose proof we leave to the interested reader:

\begin{lemma}
For each $r\in\{0,1,\ldots ,g+1\}$ there exist  a monic polynomial $P\in \R[x]$ of degree $g+1$ and a  real number $p\neq 0$ such that $P(x)^2-p$ has precisely $2r$ simple real roots and no multiple complex root.
\end{lemma}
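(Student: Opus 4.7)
The plan is to factor through the function $N(p) := \#\{x\in\R : P(x)^2 = p\}$, exploiting the factorization $P(x)^2 - p = (P(x)-\sqrt p)(P(x)+\sqrt p)$. For the multiplicity condition, a common root $\alpha$ of $P(x)^2 - p$ and of its derivative $2P(x)P'(x)$ satisfies $P(\alpha)=\pm\sqrt p\neq 0$, hence $P'(\alpha)=0$; so $P(x)^2-p$ has no multiple root in $\C$ iff $p\neq v^2$ for every complex critical value $v$ of $P$.

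The case $r=0$ is immediate: take $p=-1$ and any monic $P$ of degree $g+1$ whose critical values are all real (for instance any $P$ with $g+1$ distinct real zeros). Then $P(x)^2+1>0$ on $\R$, and $\pm i$ is not a real critical value, so the $2g+2$ complex non-real roots are all simple.

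For $r\in\{1,\ldots,g+1\}$, I would take $P(x)=\prod_{i=1}^{g+1}(x-a_i)$ with $a_1<\cdots<a_{g+1}$ chosen generically. By Rolle there is exactly one critical point in each $(a_i,a_{i+1})$, and these $g$ points account for all critical points since $\deg P'=g$; in particular all critical values are real. Generically, \emph{i.e.}\ off a proper algebraic subvariety of the $a$-space, the $g$ critical values $v_1,\ldots,v_g$ satisfy $v_i^2\neq v_j^2$ for $i\neq j$; this is the key input, and the symmetric example $P(x)=(x-1)(x-2)(x-3)$ (where $v_1=-v_2$) shows it is not automatic. Because $P$ alternates sign across each simple zero, the $v_i$ themselves alternate in sign (positive at local maxima, negative at local minima).

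The function $N$ then vanishes on $(-\infty,0)$, satisfies $N(0^+)=2(g+1)$ (each simple zero of $P$ contributes one nearby real solution to each of $P(x)=\sqrt p$ and $P(x)=-\sqrt p$ by the implicit function theorem), and stabilizes at $N(+\infty)=2$ (only the unbounded branches of $P$ eventually contribute). As $p$ crosses upward through a jump-point $p=v_i^2$, exactly one of the equations $P(x)=\sqrt p$, $P(x)=-\sqrt p$ loses two real solutions that collide at the critical point and move off to $\C\setminus\R$; the condition $v_i^2\neq v_j^2$ ensures that the $g$ jumps occur at $g$ distinct values of $p$, each contributing a drop of exactly $2$. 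These $g$ drops account precisely for the passage from $2(g+1)$ to $2$, so the level set $\{N=2r\}$ is a non-empty open subinterval $I_r\subset(0,\infty)$ for every $r\in\{1,\ldots,g+1\}$. Any $p\in I_r$ automatically satisfies $p\notin\{v_i^2\}$, so by the preliminary observation $P(x)^2-p$ has exactly $2r$ simple real roots and its remaining $2g+2-2r$ complex roots are also simple. The only substantive obstacle is the generic separation $v_i^2\neq v_j^2$ singled out above, without which two jumps could coincide and $N$ could skip the target value $2r$.
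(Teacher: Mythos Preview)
Your argument is correct. The paper itself omits the proof (``we leave [it] to the interested reader''), so there is nothing to compare against; your level-set analysis of $p\mapsto N(p)=\#\{x\in\R:P(x)^2=p\}$ for a polynomial $P$ with $g+1$ simple real zeros is exactly the kind of elementary reasoning the authors have in mind.

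One point deserves a sentence of justification rather than a bare assertion: the generic separation $v_i^2\neq v_j^2$. You correctly flag it as the crux and exhibit a failure ($P(x)=(x-1)(x-2)(x-3)$), but you do not exhibit a success, which is what is needed to conclude that the bad locus is a \emph{proper} subvariety. A clean way to close this: having fixed $P$ with $g+1$ simple real zeros (hence all $g$ critical points real), replace $P$ by $P+\epsilon$ for small real $\epsilon$. The roots stay real and simple, the critical points are unchanged, and the critical values become $v_i+\epsilon$. Then $(v_i+\epsilon)^2=(v_j+\epsilon)^2$ forces either $v_i=v_j$ or $\epsilon=-(v_i+v_j)/2$; avoiding the finitely many bad values of $\epsilon$ handles the second case, and the first case (distinctness of the $v_i$ themselves) is a separate, equally easy, genericity condition that can be broken by perturbing a single root $a_k$. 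With that line added, the proof is complete.
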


Let now  $\alpha\in I_0$ be such that the assumption of  Proposition \ref{P.2} holds for $U_\alpha\cap \R^{2g+2}$, and let $K$ be the  connected component  of $U_\alpha\cap \R^{2g+2}$ in  $S\cap \R^{2g+2}$. In the above description, $K$ corresponds to an integer $r\in [0,g+1]$. Let us apply the lemma to it and let $P(x), p$ the polynomial and  real number coming from the lemma.  Then the polynomial  $P(x)^2-p$  corresponds, in our opening notation, to a point $s\in K$, so that  $f_s(x)=P(x)^2-p$.


The identity  $P(x)^2-f_s(x)=p\neq 0$ shows that the function $P(x)+y$  on $Y_s$ 
has divisor $(g+1)([\infty^+]-[\infty^-])$, hence 
$\sigma(s)$  is torsion in $J_s$, of   order (dividing but in fact equal to) $g+1$. Thus, for every $\gamma$ with $s\in U_\gamma$, the point  $\rho:=B_\gamma(s)$  is a rational point in $\Q^{2g}$ with denominator dividing $g+1$.

Let us look at the complex  fiber $B_\gamma^{-1}(\rho)$ around $s$: it consists of complex points $t\in S$ such that  
$\sigma(t)$ has torsion order dividing $g+1$ on the Jacobian $J_t$ of  $Y_t$. 
Hence for such a $t$  there is a rational function on $Y_t$ with divisor $(g+1)([\infty^+]-[\infty^-])$; this corresponds  to a  polynomial $P_t$ of degree $g+1$ such that  $P_t(x)^2=f_t(x)+p_t$ for some nonzero complex number  $p_t$. This $P_t(x)$  
 is monic of  degree $g+1$,  hence  
 it     depends on $\le g+1$ complex parameters; taking also $p_t$ into account, we see that  the complex dimension of the   said   fiber cannot be $>g+2$, yielding the sought contradiction. 

\subsection{Conclusion of the argument}  We have thus proved that for  every  $\alpha\in I_0$ there exists some  point $s\in U_\alpha\cap \R^{2g+2}$  such that  the differential of $B_{\R\alpha}$ has  rank $\ge g$, hence maximal rank $g$,  at $s$. 
  Then such differential  has maximal rank on  a dense open subset $U_\alpha'$ of (real)  points in $U_{\alpha}\cap \R^{2g+2}$. At this stage we can  invoke the following general fact:
 
{\it Let  $f : X\to Y$  be a continuous map between topological spaces; suppose there is an open dense subset  $X'$  of  $X$  such that $f_{|X'} : X' \to Y$ is open. Let  $Z$  be a dense subset of  $Y$. Then  $f^{-1}(Z)$  is dense in  $X$}.

{\it Proof }. Suppose not. Then, there is a non-empty open subset  $V$ of $ X$ such
  that  $ f(V) \cap Z$  is empty. Since $X'$ is dense,  $X' \cap V $  is not empty. Because $f_{|X'}$  is open, $ f(X' \cap V)$ is open in $Y$ and does not intersect $Z$ : this contradicts the fact that  $Z$  is dense in  $Y$.
  
  \medskip
  
  Now we may apply this statement to $X=U_\alpha\cap \R^{2g+2}$, $f=B_{\R\alpha}$ and to $X'=U_\alpha'$, taking for $Z$ the set of rational points in $B_{\R\alpha}(X)$. Theorem \ref{T.2} follows.

 \medskip

{\bf Remark} (i) One might ask about a $p$-adic analogue; it seems to us  that a density statement does not hold. 

\bigskip

\bigskip
\section{Appendix II by Z. Gao: An application of the (pure) Ax-Schanuel Theorem.}\label{Gao}
  
\subsection{Main Result}

Let $\sA_g$ be the moduli space of principally polarized abelian varieties of dimension $g$, possibly with some level structure. 

Let $S$ be a complex irreducible algebraic variety. Let $\pi{:}\; A\rightarrow S$ be an abelian scheme of relative dimension $g$. We may assume that $A/S$ is principally polarized up to replacing $A$ by an isogeneous abelian scheme. Then $\pi {:}\; A\rightarrow S$ induces a modular map $\mu_A {:}\; S\rightarrow \sA_g$. {\it  We assume $\dim \mu_A(S) \ge g$}.

 
Next we want to understand when $S$ satisfies the conclusion of Cor. \ref{cor1}. More precisely, let $\frak{H}_g$ be the Siegel upper half space and let ${u} :  \frak{H}_g \rightarrow \sA_g$ be the uniformization. Denote by $\tilde{S}$ a complex analytic irreducible component of ${u}^{-1}(\mu_A(S))$ in $\frak{H}_g$. We name

\smallskip\noindent\boxed{Condition~ACZ}
{\it For any $\tilde{s} \in \tilde{S}$ and any $\mathbf{c} \in \C^g$, there exists a complex analytic subvariety $\tilde{C} \subset \tilde{U}$ of dimension $\dim \mu_A(S) - g + 1$ passing through $\tilde{s} \in \frak{H}_g$ such that $\tilde{s}'  \mathbf{c}$ is constant for any $\tilde{s}' \in \tilde{C}$. Here we view $\mathbf{c} \in \C^g$ as a column vector and $\tilde{s}'  \mathbf{c}$ is the usual matrix product (recall that every point in $\frak{H}_g$ is a $g\times g$ matrix).}




Our main result is:
\begin{thm}\label{ThmGeomCriterionHodgeGeneric}
If Condition~ACZ is satisfied, then $\mu_A(S)$ is contained in a \textbf{proper} special subvariety of $\sA_g$.
\end{thm}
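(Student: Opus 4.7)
My plan is to use Condition~ACZ together with the Mok--Pila--Tsimerman Ax--Schanuel theorem for $\sA_g$ to exhibit $\mu_A(S)$ as contained in a proper special subvariety. The idea is that Condition~ACZ produces an abundance of atypical intersections of $\tilde S$ with explicit algebraic subvarieties of $\frak H_g$; each such atypical intersection, via Ax--Schanuel, forces part of $\mu_A(S)$ into a proper weakly special subvariety, and the universal nature of ACZ is exploited to pass to a global statement.

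\noindent\textbf{Step 1 (producing atypical intersections).} For each $\tilde s_0 \in \tilde S$ and each nonzero $\mathbf{c} \in \C^g$, introduce the affine-linear algebraic subvariety
\[
\tilde V_{\mathbf{c}, \tilde s_0} := \{Z \in \frak H_g : (Z - \tilde s_0)\mathbf{c} = 0\} \subset \frak H_g,
\]
of codimension $g$ in $\frak H_g$. By Condition~ACZ there is an analytic irreducible component $\tilde C_{\mathbf{c}, \tilde s_0} \subset \tilde V_{\mathbf{c}, \tilde s_0} \cap \tilde S$ through $\tilde s_0$ of dimension at least $d - g + 1$, where $d := \dim \mu_A(S) = \dim \tilde S$. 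Since the expected intersection dimension is $\dim \tilde V_{\mathbf{c}, \tilde s_0} + \dim \tilde S - \dim \frak H_g = d - g$, this component witnesses an atypical intersection of defect at least one.

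\noindent\textbf{Step 2 (applying Ax--Schanuel).} Invoke the Mok--Pila--Tsimerman Ax--Schanuel theorem for $\sA_g$: any atypical analytic component of the intersection of $\tilde S$ with an algebraic subvariety of $\frak H_g$ must lie inside a proper weakly special subvariety. Hence there is $\tilde T_{\mathbf{c}, \tilde s_0} \subsetneq \frak H_g$ weakly special with $\tilde C_{\mathbf{c}, \tilde s_0} \subset \tilde T_{\mathbf{c}, \tilde s_0}$, and $u(\tilde T_{\mathbf{c}, \tilde s_0})$ is a proper weakly special subvariety of $\sA_g$ containing $\mu_A(s_0)$ and the positive-dimensional $u(\tilde C_{\mathbf{c}, \tilde s_0}) \subset \mu_A(S)$.

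\noindent\textbf{Step 3 (propagation to $\mu_A(S)$ — the main obstacle).} One must now produce a \emph{single} proper weakly special subvariety $T \subsetneq \sA_g$ containing $\mu_A(S)$. The strategy is to exploit that Condition~ACZ holds at every $(\tilde s_0, \mathbf{c})$. For fixed $\mathbf{c} \ne 0$, as $\tilde s_0$ varies, the leaves $\tilde C_{\mathbf{c}, \tilde s_0}$ are the fibres of the analytic map $\tilde S \to \C^g$, $\tilde s \mapsto \tilde s\,\mathbf{c}$, whose image has dimension $\le g-1$; they thus foliate $\tilde S$ by complex-analytic subvarieties of dimension $\ge d-g+1$, each mapped into a proper weakly special subvariety of $\sA_g$ by Step~2. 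Using that the weakly special subvarieties of $\sA_g$ of given type fit into finitely many algebraic families parametrised by Shimura subdata of $\mathrm{GSp}_{2g}$ together with continuous translates, and that $\mu_A(S)$ is algebraic, one runs a dichotomy argument: either the produced $\tilde T_{\mathbf{c}, \tilde s_0}$ is independent of $\tilde s_0$ for this $\mathbf{c}$, in which case $\tilde S$ is contained in that $\tilde T$ and we are done; or the family varies non-trivially in $\tilde s_0$, in which case the corresponding algebraic family of proper weakly special subvarieties covers $\mu_A(S)$, and one iterates the Ax--Schanuel argument on the induced structure, ultimately producing a single proper weakly special subvariety containing $\mu_A(S)$.

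\noindent\textbf{Step 4 (weakly special implies special).} The smallest weakly special subvariety of $\sA_g$ containing an irreducible algebraic subvariety is in fact special: it is cut out by the generic Mumford--Tate group of the associated variation of Hodge structure, which is a $\Q$-algebraic subgroup of $\mathrm{GSp}_{2g}$ and therefore defines a special subvariety. Since $\mu_A(S)$ is contained in a proper weakly special subvariety by Step~3, its Mumford--Tate subvariety is a proper special subvariety of $\sA_g$, giving the conclusion. The heart of the argument, and the hardest step, is Step~3: rigorously controlling how the weakly special subvarieties produced by Ax--Schanuel vary with the parameters $(\mathbf{c}, \tilde s_0)$, so as to pass from a pointwise conclusion to one holding on all of $\mu_A(S)$.
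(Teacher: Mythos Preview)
Your Steps 1 and 2 are correct and essentially coincide with the paper's Proposition~\ref{PropProperBiAlgSubset}: the Ax--Schanuel inequality, applied to the analytic piece $\tilde C$ sitting in both $\tilde S$ and the codimension-$g$ linear slice $H_{\mathbf c,\tilde s_0}$, forces $\tilde C^{\mathrm{biZar}}$ to be a proper positive-dimensional bi-algebraic subset of $\frak H_g$ through $\tilde s_0$. Step 4 is also fine.

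The genuine gap is Step 3. Your ``dichotomy argument'' is only sketched: you assert that either the weakly special $\tilde T_{\mathbf c,\tilde s_0}$ is constant in $\tilde s_0$, or else one can ``iterate the Ax--Schanuel argument on the induced structure'' to reach a single proper weakly special containing $\mu_A(S)$. Neither branch is actually proved, and the second one is not a routine matter: controlling how the weakly special hulls vary in an algebraic family and extracting a global containment from a pointwise one is exactly the kind of statement that usually requires additional input (finiteness of Shimura subdata, o-minimality, or a separate structure theorem), none of which you invoke precisely. As written, Step 3 is a hope, not an argument.

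The paper avoids this propagation problem entirely, and the trick is worth knowing. It proves (Lemma~\ref{LemmaEquivalentConditionsForProperSpecial}) that for an irreducible $S\subset\sA_g$, the statement ``$S$ is not contained in any proper special subvariety'' is \emph{equivalent} to the existence of a single point $s\in S(\C)$ that lies in no proper positive-dimensional bi-algebraic subvariety of $\sA_g$ (this uses that a Hodge-generic point has Mumford--Tate group $\mathrm{GSp}_{2g}$, and that any weakly special through such a point must be a point or all of $\frak H_g$, since $\mathrm{GSp}_{2g}^{\mathrm{ad}}$ is simple). Now argue by contradiction: if $\mu_A(S)$ were not in a proper special subvariety, pick such an $s$ and any $\tilde s\in u^{-1}(s)$; your Step 2 hands you a proper positive-dimensional bi-algebraic $\tilde F$ through $\tilde s$, and $u(\tilde F)$ contradicts the choice of $s$. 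Thus the pointwise output of Ax--Schanuel, applied at a \emph{Hodge-generic} point, already yields the global conclusion, and no propagation across $\tilde S$ is needed.
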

\begin{rem}
There are several equivalent ways to state the conclusion of Theorem~\ref{ThmGeomCriterionHodgeGeneric}. In fact for any irreducible subvariety $S$ of $\sA_g$, we shall prove in Lemma~\ref{LemmaEquivalentConditionsForProperSpecial} that the following statements are equivalent:
\begin{enumerate}
\item[(i)] The variety $S$ is not contained in any proper special subvariety of $\sA_g$.
\item[(ii)] The variety $S$ contains a point with Mumford-Tate group ${GSp}_{2g}$.
\item[(iii)] The monodromy group of $A/S$ is Zariski dense in ${Sp}_{2g}$.
 \end{enumerate}
\end{rem}
 In practice, condition (i) is often checked by computation of the Mumford-Tate group (hence condition (ii)) or of the monodromy group (hence condition (iii)). For example as $A/S$ is not isotrivial by Assumption~1, if we denote by $V = H^1(A_s,\C)$ for any $s \in S$, then condition (iii) holds if (and only if) the symmetric square $S^2 V$ is irreducible for the monodromy action by Beukers-Brownawell-Heckman \cite[Theorem~2.2]{BeukersSiegel-Nor}.

In fact, Theorem~\ref{ThmGeomCriterionHodgeGeneric} can be deduced from a more technical and more general theorem. In order to state the theorem we need to introduce some notation.

Recall the uniformization ${u} :  \frak{H}_g \rightarrow \sA_g$. The natural embedding $\frak{H}_g \subset \C^{g(g+1)/2}$ endows $\frak{H}_g$ with a structure of ``complex algebraic variety'', and hence ${u}$ gives rise to a bi-algebraic system. Say a complex analytic irreducible subset $\tilde{Y}$ of $\frak{H}_g$ is \textit{bi-algebraic} if $\tilde{Y}$ is algebraic in $\frak{H}_g$ and ${u}(\tilde{Y})$ is algebraic in $\sA_g$. See $\mathsection$\ref{SectionReviewOnAS} for more details.

Denote by $H_S^{\circ}$ the connected algebraic monodromy group of $A/S$, namely $H_S^{\circ}$ is the neutral component of the Zariski closure of $\mathrm{im}(\pi_1(S,s) \rightarrow \pi_1(\sA_g,s)) \subset {Sp}_{2g}(\Z)$ in ${Sp}_{2g}$. Denote by $\tilde{S}^{\mathrm{biZar}}$ the smallest bi-algebraic subset of $\frak{H}_g$ which contains $\tilde{S}$. It exists by Lemma~\ref{LemmaIntersectionOfBiAlgebraic}.

Recall that every element of $\frak{H}_g$ is a $g\times g$-matrix. For any $\mathbf{c} \in \C^g$ and any $\tilde{s} \in \frak{H}_g$, denote by
\[
H_{\mathbf{c},\tilde{s}} := \{Z \in \frak{H}_g: Z \mathbf{c} = \tilde{s}  \mathbf{c}\}.
\]

\begin{thm}\label{ThmStrongerForm}
There does not exist an abelian scheme $A/S$ with $\dim \mu_A(S) \ge g$ satisfying the following three properties:
\begin{enumerate}
\item[(i)] The connected algebraic monodromy group $H_S^{\circ}$ is simple;
\item[(ii)] There exist $\mathbf{c} \in \C^g$ and $\tilde{s} \in \tilde{S}$ such that $\mathrm{codim}_{\tilde{S}^{\mathrm{biZar}}}(H_{\mathbf{c},\tilde{s}} \cap \tilde{S}^{\mathrm{biZar}}) = g$.
\item[(iii)] Condition~ACZ is satisfied.
\end{enumerate}
\end{thm}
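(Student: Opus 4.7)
My plan is to argue by contradiction via an application of the pure Ax-Schanuel theorem for $\mathcal A_g$ of Mok--Pila--Tsimerman, where Condition~ACZ provides the atypical intersection and hypotheses~(i)~and~(ii) play complementary roles in eliminating the possibilities allowed by that theorem. Suppose $A/S$ satisfies (i)--(iii) and fix the pair $(\mathbf c,\tilde s)$ given by (ii). Applying Condition~ACZ to this pair yields a complex-analytic irreducible subvariety $\tilde C\subseteq \tilde S\cap H_{\mathbf c,\tilde s}$ passing through $\tilde s$ with $\dim \tilde C=\dim\mu_A(S)-g+1$. Since $\tilde C$ is contained in $H_{\mathbf c,\tilde s}\cap \tilde S^{\mathrm{biZar}}$, whose dimension is $\dim \tilde S^{\mathrm{biZar}}-g$ by (ii), one immediately obtains $\dim \tilde S^{\mathrm{biZar}}\ge\dim\mu_A(S)+1$; in particular $\mu_A(S)$ sits properly inside the smallest weakly special subvariety $S^{*}:=u(\tilde S^{\mathrm{biZar}})$ of $\mathcal A_g$ containing it.

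Working inside the Shimura subvariety $S^{*}$, set $\tilde V:=\tilde S^{\mathrm{biZar}}\cap H_{\mathbf c,\tilde s}$ (algebraic in $\tilde S^{\mathrm{biZar}}$) and $V:=\mu_A(S)$ (algebraic in $S^{*}$). Then $\tilde C$ is a component of $\tilde V\cap u^{-1}(V)$ satisfying
\[
\dim \tilde C \;=\;\dim\mu_A(S)-g+1\;>\;\dim\tilde V+\dim V-\dim S^{*}\;=\;\dim\mu_A(S)-g,
\]
an atypical intersection with excess~$1$. The Mok--Pila--Tsimerman Ax-Schanuel theorem then produces a proper weakly special subvariety $W\subsetneq S^{*}$ with $u(\tilde C)\subseteq W$. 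Varying $(\mathbf c,\tilde s)$ through the Zariski-open subset of $\C^{g}\times \tilde S$ on which the typical codimension of~(ii) persists (an open condition), one obtains a whole family $\{W_{\mathbf c,\tilde s}\}$ of proper weakly specials of $S^{*}$, and the associated components $\tilde C_{\mathbf c,\tilde s}$ collectively cover $\tilde S$.

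To derive the contradiction I would use hypothesis~(i). By the Deligne--André monodromy theorem, $H_S^{\circ}$ equals the derived subgroup of the generic Mumford--Tate group $G$ of $S^{*}$; hypothesis~(i) therefore forces $G^{\mathrm{der}}$ to be $\Q$-simple. Proper weakly special subvarieties of $S^{*}$ correspond to Shimura sub-data $(M,X_M)\hookrightarrow (G,X_G)$ with $M\subsetneq G$ reductive, together with a factor decomposition of $M^{\mathrm{ad}}$, and the simplicity of $G^{\mathrm{der}}$ severely restricts the possible $M$. The idea is that the family $\{W_{\mathbf c,\tilde s}\}$ produced in the previous step cannot be accommodated inside this restricted lattice of proper weakly specials while the union of the corresponding $\tilde C_{\mathbf c,\tilde s}$ is Zariski-dense in $\tilde S$ (and its bi-algebraic closure equals all of $\tilde S^{\mathrm{biZar}}$); this should force at least one $W_{\mathbf c,\tilde s}$ to coincide with $S^{*}$, contradicting its properness.

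The genuinely difficult step is this last one: quantitatively controlling the family of weakly specials produced by Ax-Schanuel and establishing under the simplicity assumption that they cannot coexist as proper subvarieties of $S^{*}$ while their $\tilde C_{\mathbf c,\tilde s}$ fill up $\tilde S$. One likely needs a defect-refined form of Ax-Schanuel yielding the \emph{smallest} weakly special containing each $u(\tilde C_{\mathbf c,\tilde s})$, combined with the classification of Shimura sub-data of $(G,X_G)$ under the simplicity hypothesis. The first two steps, by contrast, are essentially mechanical once Condition~ACZ and hypothesis~(ii) are placed in the Mok--Pila--Tsimerman framework.
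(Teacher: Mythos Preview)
Your setup through the Ax-Schanuel inequality is correct and matches the paper's: from (ii) and Condition~ACZ you obtain $\tilde C\subset \tilde S\cap H_{\mathbf c,\tilde s}$ with $\dim\tilde C=\dim\mu_A(S)-g+1$, and Ax-Schanuel forces $\tilde C^{\mathrm{biZar}}\subsetneq \tilde S^{\mathrm{biZar}}$. But your final step is both incomplete and heading in the wrong direction. Varying $(\mathbf c,\tilde s)$ to produce a covering family of proper weakly specials and then arguing that simplicity obstructs such a family is not how the contradiction is obtained; you yourself flag this as ``genuinely difficult,'' and indeed it is not clear how to finish along those lines. (There is also a slip: Deligne--Andr\'e only gives that $H_S^{\circ}$ is \emph{normal} in $\mathrm{MT}(S)^{\mathrm{der}}$, not equal to it; and $S^*$ is weakly special, not a Shimura subvariety in general.)

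The paper avoids the family argument entirely by a single clever choice of $\tilde s$. The key observation is Lemma~\ref{LemmaWeaklySpecialLinear}: bi-algebraic subsets of $\frak H_g$ are \emph{affine linear}. Since $H_{\mathbf c,\tilde s}$ is also affine linear, the codimension condition in (ii) depends only on $\mathbf c$, not on $\tilde s$; hence one may replace $\tilde s$ by a \emph{Hodge generic} point of $\tilde S$ (i.e.\ one with $\mathrm{MT}(\tilde s)=\mathrm{MT}(S)$) without losing (ii). Now apply Condition~ACZ and Ax-Schanuel at this Hodge generic $\tilde s$ to get a single weakly special $\tilde C^{\mathrm{biZar}}\subsetneq \tilde S^{\mathrm{biZar}}$ through $\tilde s$. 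By the logarithmic Ax theorem, $\tilde S^{\mathrm{biZar}}=H_S^{\circ}(\R)^+\tilde s$; and any weakly special through a Hodge generic point is of the form $N(\R)^+\tilde s$ for some normal subgroup $N\lhd\mathrm{MT}(\tilde s)$. Since $H_S^{\circ}$ is simple and both $N$, $H_S^{\circ}$ are normal in the reductive group $\mathrm{MT}(\tilde s)$, the inclusion $N(\R)^+\tilde s\subset H_S^{\circ}(\R)^+\tilde s$ together with $\dim\tilde C^{\mathrm{biZar}}>0$ forces $\tilde C^{\mathrm{biZar}}=\tilde S^{\mathrm{biZar}}$, the desired contradiction. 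No family, no covering argument---just one well-chosen point.
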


\subsection{Review on the bi-algebraic system of $\sA_g$ and Ax-Schanuel}\label{SectionReviewOnAS}
We focus on the case $\sA_g$. We shall consider the uniformization ${u} {:}\; \frak{H}_g \rightarrow \sA_g$, where $\frak{H}_g$ is the Siegel upper half space defined as the following.
\[
\frak{H}_g = \left\{ Z = X + \sqrt{-1}Y \in \mathrm{Mat}_{g\times g}(\C) : Z = Z^{\mathrm{t}},~ Y>0 \right\}.
\]
Later on we will study subvarieties of $\frak{H}_g$ and $\sA_g$ at the same time. To distinguish them we often use letters to denote subsets of $\sA_g$ and add a $\sim$ on top to denote subsets of $\frak{H}_g$.

Consider
\[
{\frak p}_g = \left\{ Z \in \mathrm{Mat}_{g\times g}(\C) : Z = Z^{\mathrm{t}} \right\}.
\]
Then ${\frak p}_g \cong \C^{g(g+1)/2}$ as $\C$-vector spaces. The Siegel upper half space is open (in the usual topology) and semi-algebraic in ${\frak p}_g$. The complex structure on ${\frak p}_g$ thus induces a structure of complex analytic variety on $\frak{H}_g$. Following the convention of Pila, Ullmo and Yafaev, we define
\begin{defn}
A subset $\tilde{Y}$ of $\frak{H}_g$ is said to be \textbf{irreducible algebraic} if it is a complex analytic irreducible component of $\frak{H}_g \cap \tilde{Y}^{\mathrm{c}}$ for some algebraic subvariety $\tilde{Y}^{\mathrm{c}}$ of ${\frak p}_g$.
\end{defn}
Hence we have the following definition.
\begin{defn}
\begin{enumerate}
\item A subset $\tilde{F}$ of $\frak{H}_g$ is said to be \textbf{bi-algebraic} if it is irreducible algebraic in $\frak{H}_g$ and ${u}(\tilde{F})$ is an algebraic subvariety of $\sA_g$.
\item An irreducible subvariety $F$ of $\sA_g$ is said to be \textbf{bi-algebraic} if one (and hence any) complex analytic irreducible component of ${u}^{-1}(F)$ is irreducible algebraic in $\frak{H}_g$.
\end{enumerate}
\end{defn}

Before moving on, let us make the following observation. 
\begin{lemma}\label{LemmaIntersectionOfBiAlgebraic}
Let $F_1$ and $F_2$ be two bi-algebraic subvarieties of $\sA_g$, and let $F$ be an irreducible component of $F_1 \cap F_2$. Then $F$ is also bi-algebraic.
\end{lemma}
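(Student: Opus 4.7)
The plan is to exhibit, for any analytic irreducible component $\tilde{F}$ of $u^{-1}(F)$, an algebraic subvariety $\tilde{W}^{\mathrm{c}} \subset \frak{p}_g$ such that $\tilde{F}$ is an analytic irreducible component of $\frak{H}_g \cap \tilde{W}^{\mathrm{c}}$. This will show $\tilde{F}$ is irreducible algebraic in $\frak{H}_g$; combined with the fact that $u(\tilde{F}) = F$ is automatically algebraic in $\sA_g$ (as an irreducible component of the algebraic set $F_1 \cap F_2$), it yields that $F$ is bi-algebraic.

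The candidate $\tilde{W}^{\mathrm{c}}$ is built from the algebraic hulls supplied by the bi-algebraicity of $F_1$ and $F_2$. Since $F \subset F_i$, one can pick analytic irreducible components $\tilde{F}_i$ of $u^{-1}(F_i)$ containing $\tilde{F}$; bi-algebraicity of $F_i$ then yields an irreducible algebraic $\tilde{Y}_i^{\mathrm{c}} \subset \frak{p}_g$ (e.g.\ the Zariski closure of $\tilde{F}_i$ in $\frak{p}_g$) such that $\tilde{F}_i$ is an analytic irreducible component of $\frak{H}_g \cap \tilde{Y}_i^{\mathrm{c}}$. I would take $\tilde{W}^{\mathrm{c}}$ to be the irreducible component of $\tilde{Y}_1^{\mathrm{c}} \cap \tilde{Y}_2^{\mathrm{c}}$ containing the Zariski closure of $\tilde{F}$, so that $\tilde{F} \subset \tilde{F}_1 \cap \tilde{F}_2 \subset \frak{H}_g \cap \tilde{W}^{\mathrm{c}}$.

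The main step is to check that $\tilde{F}$ is actually an analytic irreducible component of $\frak{H}_g \cap \tilde{W}^{\mathrm{c}}$, not merely contained in one. Let $D$ be any such component containing $\tilde{F}$; the goal is $D = \tilde{F}$, which I would deduce by proving $u(D) \subset F$. The Zariski closure $G := \overline{u(D)}^{\mathrm{Zar}}$ in $\sA_g$ is irreducible algebraic: irreducibility follows from analytic irreducibility of $D$, because an algebraic decomposition $G = V_1 \cup V_2$ would pull back via $u$ to a proper analytic decomposition $D = (D \cap u^{-1}(V_1)) \cup (D \cap u^{-1}(V_2))$, a contradiction. One then argues $G \subset F_1 \cap F_2$; combined with $F = u(\tilde{F}) \subset G$, irreducibility of $G$, and the fact that $F$ is itself an irreducible component of $F_1 \cap F_2$ (so no strictly larger irreducible subvariety of $F_1 \cap F_2$ contains it), this forces $G = F$. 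Hence $u(D) \subset F$, $D \subset u^{-1}(F)$, and maximality of $\tilde{F}$ as an analytic irreducible component of $u^{-1}(F)$ gives $D = \tilde{F}$.

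The main obstacle I anticipate is the intermediate claim $G \subset F_1 \cap F_2$, i.e.\ $u(D) \subset F_i$ for $i = 1, 2$. Since $D$ is analytic irreducible and contained in $\frak{H}_g \cap \tilde{Y}_i^{\mathrm{c}}$, it lies in some analytic irreducible component $E_i$ of that set. If $E_i = \tilde{F}_i$, then $u(D) \subset u(\tilde{F}_i) = F_i$ as desired; the difficulty is ruling out that $D$ lies on a different sheet of $\tilde{Y}_i^{\mathrm{c}}$ meeting $\tilde{F}_i$ precisely along $\tilde{F}$, whose image under $u$ could a priori be a different algebraic subvariety of $\sA_g$. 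Resolving this requires either a finer choice of $\tilde{Y}_i^{\mathrm{c}}$ that avoids such multiple sheets near $\tilde{F}$, or a dimension count exploiting that distinct analytic irreducible components of $\frak{H}_g \cap \tilde{Y}_i^{\mathrm{c}}$ meet in codimension $\geq 1$, played off against the inequality $\dim D \geq \dim \tilde{F} = \dim F$ forced by the choice of $\tilde{W}^{\mathrm{c}}$.
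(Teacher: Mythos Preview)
Your overall strategy matches the paper's: pick an analytic component $\tilde{F}$ of $u^{-1}(F)$, trap it between $F$ downstairs and an irreducible analytic set upstairs whose image lands in $F_1\cap F_2$, then use that $F$ is an irreducible component of $F_1\cap F_2$ together with maximality of $\tilde{F}$ to force equality. The difference is \emph{where} you take that intermediate irreducible set, and this is precisely what creates the obstacle you flag.

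You take $D$ to be an analytic irreducible component of $\frak{H}_g\cap\tilde{W}^{\mathrm c}$ with $\tilde{W}^{\mathrm c}\subset\tilde{Y}_1^{\mathrm c}\cap\tilde{Y}_2^{\mathrm c}$, and then worry that $D$ might sit on a sheet of $\frak{H}_g\cap\tilde{Y}_i^{\mathrm c}$ different from $\tilde{F}_i$. The paper sidesteps this entirely by working one level down: it takes $\tilde{F}'$ to be an irreducible component of $\tilde{F}_1\cap\tilde{F}_2$ containing $\tilde{F}$. Then $\tilde{F}'\subset\tilde{F}_i$ holds \emph{by construction}, so $u(\tilde{F}')\subset u(\tilde{F}_i)=F_i$ is immediate and no sheet issue arises. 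From $F=u(\tilde{F})\subset u(\tilde{F}')\subset F_1\cap F_2$ one takes Zariski closures, uses irreducibility of $u(\tilde{F}')^{\mathrm{Zar}}$ and the fact that $F$ is a component of $F_1\cap F_2$ to get $u(\tilde{F}')=F$, hence $\tilde{F}'\subset u^{-1}(F)$, hence $\tilde{F}'=\tilde{F}$ by maximality. Algebraicity of $\tilde{F}=\tilde{F}'$ then comes for free: it is an irreducible component of the intersection of two irreducible algebraic subsets of $\frak{H}_g$.

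So the gap in your write-up is not a missing idea but a suboptimal choice of the intermediate object. Replace your $D\subset\frak{H}_g\cap\tilde{W}^{\mathrm c}$ by $\tilde{F}'\subset\tilde{F}_1\cap\tilde{F}_2$ and your argument goes through without the obstacle; the dimension counts and ``finer choice of $\tilde{Y}_i^{\mathrm c}$'' you propose are unnecessary.
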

\begin{proof}
First $F$ is clearly irreducible algebraic. Consider a complex analytic irreducible component $\tilde{F}$ of ${u}^{-1}(F)$. It is contained in both $\tilde{F}_1$, an irreducible component of ${u}^{-1}(F_1)$, and $\tilde{F}_2$, an irreducible component of ${u}^{-1}(F_2)$. Let $\tilde{F}'$ be an irreducible component of $\tilde{F}_1 \cap \tilde{F}_2$ which contains $\tilde{F}$. Then
\[
F = {u}(\tilde{F}) = {u}(\tilde{F}') \subset {u}(\tilde{F}_1) \cap {u}(\tilde{F}_2) = F_1 \cap F_2.
\]
Taking the Zariski closures, we get
\[
F \subset {u}(\tilde{F}')^{{Zar}} \subset F_1 \cap F_2.
\]
Now since $\tilde{F}'$ is irreducible, we know that ${u}(\tilde{F}')^{{Zar}}$ is irreducible. Hence $F = {u}(\tilde{F}')^{{Zar}}$ since $F$ is an irreducible component of $F_1 \cap F_2$. Therefore $F = {u}(\tilde{F}')$ and so $\tilde{F} = \tilde{F}'$ is algebraic. So $F$ is bi-algebraic.
\end{proof}

Based on this observation, for any irreducible subvariety $Y$ of $\sA_g$, there exists a unique smallest bi-algebraic subvariety of $\sA_g$ which contains $Y$. Then for any complex analytic irreducible subset $\tilde{Y}$ of $\frak{H}_g$, there exists a unique smallest bi-algebraic subset of $\frak{H}_g$ which contains $\tilde{Y}$: it is an irreducible component of the smallest bi-algebraic subvariety of $\sA_g$ which contains ${u}(\tilde{Y})^{{Zar}}$.

There is a better characterization of bi-algebraic subvarieties of $\sA_g$ using Hodge theory and group theory. They are precisely the so-called \textit{weakly special} subvarieties of $\sA_g$ defined by Pink \cite[Definition~4.1.(b)]{PinkA-Combination-o}. This is proven by Ullmo-Yafaev \cite[Theorem~1.2]{UllmoA-characterisat}. Moonen has also studied these subvarieties and proved that they are precisely the totally geodesic subvarieties of $\sA_g$. See \cite[4.3]{MoonenLinearity-prope}. Linearity properties in Shimura varieties was first studied by Moonen in \textit{loc.cit}. For our purpose we prove the following lemma.

\begin{lemma}\label{LemmaWeaklySpecialLinear}
Let $\tilde{F}$ be a bi-algebraic subset of $\frak{H}_g$. Then it is affine linear, meaning that it is the intersection of $\frak{H}_g$ with some affine linear subspace of ${\frak p}_g$.
\end{lemma}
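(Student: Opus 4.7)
The strategy is to combine the Ullmo--Yafaev characterization of bi-algebraic subsets with the linearity property of Hermitian totally geodesic submanifolds due to Satake and Moonen. By \cite{UllmoA-characterisat}, recalled just above the statement of the lemma, $\tilde F$ is weakly special in the sense of Pink, hence of the form $X_{M_1}\times\{x_2\}$ for some Shimura sub-datum $(M,X_M)\hookrightarrow(GSp_{2g},\frak H_g)$ with an almost-direct decomposition $M^{\mathrm{der}}=M_1\cdot M_2$ inducing a product splitting $X_M=X_{M_1}\times X_{M_2}$, and some $x_2\in X_{M_2}$. It therefore suffices to prove that any such symmetric sub-domain, and any slice of it coming from a factor of a product decomposition, is cut out in $\frak p_g$ by an affine linear subspace. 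Using the $Sp_{2g}(\R)$-action on $\frak H_g$ (which maps affine subspaces of $\frak p_g$ through one point to affine subspaces of $\frak p_g$ through its image), one can translate a chosen base point $\tilde s_0\in\tilde F$ to $\sqrt{-1}\,I_g$ without loss of generality.

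At this distinguished base point, the Cartan decomposition $\frak{sp}_{2g,\R}=\frak u(g)\oplus\frak p_g^{\R}$ identifies $T_{\tilde s_0}\frak H_g$ with $\frak p_g$, and the embedding of $(M,X_M)$ as a sub-Shimura datum is compatible with the Cartan decompositions on both sides, since the two Hodge structures at $\tilde s_0$ are compatible by hypothesis. Consequently the holomorphic tangent space of $\tilde F$ at $\tilde s_0$ is a complex linear subspace $\frak p_{\tilde F}\subset\frak p_g$. The heart of the argument is then to invoke Moonen's linearity theorem \cite{MoonenLinearity-prope}: under these compatibility conditions, the $M_1(\R)^+$-orbit of $\tilde s_0$ agrees with the intersection $(\tilde s_0+\frak p_{\tilde F})\cap\frak H_g$. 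Since $\frak H_g$ is a convex tube domain, this intersection is connected, hence equals $\tilde F$; so $\tilde F$ is affine linear in $\frak p_g$ as required.

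The principal obstacle I would need to address is the compatibility of Moonen's statement --- classically formulated in the bounded Harish-Chandra realization of a Hermitian symmetric domain --- with the Siegel realization $\frak H_g\subset\frak p_g$ used here. For $Sp_{2g}$ the two realizations are related by the Cayley transform $Z\mapsto (Z-\sqrt{-1}\,I_g)(Z+\sqrt{-1}\,I_g)^{-1}$, which is a M\"obius rather than an affine map of $\frak p_g$, so a priori it need not preserve affine linearity of arbitrary subvarieties. However, the precise content of Moonen's (and, earlier, Satake's) result is that for the particular totally geodesic complex submanifolds coming from sub-Shimura data through $\sqrt{-1}\,I_g$, the relevant affine subspace is preserved by the transform: this is verified on the level of Lie algebras, using the compatibility between the Cartan involution at $\sqrt{-1}\,I_g$ and the $\Z/2$-gradings supplied by the Shimura data. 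Granting this compatibility (which is all that remains to be carefully checked), the above argument yields the lemma. \qed
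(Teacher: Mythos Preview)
Your reduction step contains a genuine error. You write that the $Sp_{2g}(\R)$-action on $\frak H_g$ ``maps affine subspaces of $\frak p_g$ through one point to affine subspaces of $\frak p_g$ through its image'', and use this to translate the base point to $\sqrt{-1}\,I_g$. But the action $Z\mapsto (AZ+B)(CZ+D)^{-1}$ is fractional-linear, not affine, whenever $C\neq 0$. Concretely, take $g=2$ and $\gamma=\left(\begin{smallmatrix} I&0\\ C&I\end{smallmatrix}\right)\in Sp_4(\Z)$ with $C=\left(\begin{smallmatrix}0&1\\1&0\end{smallmatrix}\right)$; then $\gamma$ carries the affine plane $\{Z_{12}=0\}\cap\frak H_2=\frak H_1\times\frak H_1$ onto a component of the irreducible quadric cone $\{W_{11}W_{22}-W_{12}^2+W_{12}=0\}\cap\frak H_2$, which contains no $2$-plane. (One checks directly that $W=\gamma\cdot\mathrm{diag}(\tau_1,\tau_2)=\frac{1}{1-\tau_1\tau_2}\left(\begin{smallmatrix}\tau_1&-\tau_1\tau_2\\-\tau_1\tau_2&\tau_2\end{smallmatrix}\right)$ satisfies this equation.) You could try to salvage the reduction by choosing $\gamma$ in the Siegel parabolic $\left(\begin{smallmatrix}*&*\\0&*\end{smallmatrix}\right)$, which does act affinely on $\frak p_g$ and still acts transitively on $\frak H_g$; this brings the base point to $\sqrt{-1}\,I_g$ legitimately. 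But then the second obstacle you yourself flag --- that Moonen's linearity is formulated in the bounded Harish-Chandra realization, related to the Siegel picture by the non-affine Cayley transform --- becomes the whole problem, and your proposed resolution (``the relevant affine subspace is preserved by the transform'') is precisely what fails in the example above.

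In fact the example shows more: since $\gamma\in Sp_4(\Z)$, the set $\gamma(\{Z_{12}=0\}\cap\frak H_2)$ is another irreducible component of $u^{-1}$ of the product locus $\sA_1\times\sA_1\subset\sA_2$, hence bi-algebraic in the paper's sense, yet it is \emph{not} affine linear in $\frak p_2$. So the statement you are asked to prove appears to be false as written. The paper's own argument has the same underlying slip: it asserts that the natural (unbounded) embedding $\frak H_g\hookrightarrow\frak p_g$ equals the Harish-Chandra (bounded) embedding composed with a \emph{linear} transformation $\ell$, but no linear map takes a bounded domain onto an unbounded one. The Helgason/Mok compatibility that both you and the paper invoke is a statement in the bounded realization; transporting it to the Siegel realization costs exactly a Cayley-type transform, which, as the example shows, need not preserve affine linearity of weakly special subsets.
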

\begin{proof} This follows from Ullmo-Yafaev's characterization and the Harish-Chandra realization of Hermitian symmetric domains. Let us explain the details. We use the language of Shimura data in the proof.

By a result of Ullmo-Yafaev \cite[Theorem~1.2]{UllmoA-characterisat}, bi-algebraic subsets of $\frak{H}_g$ are precisely the weakly special subsets of $\frak{H}_g$. Hence $\tilde{F}$ is a weakly special subset of $\frak{H}_g$. By definition of weakly special subvarieties (see \cite[Definition~2.1]{UllmoA-characterisat} or \cite[Definition~4.1.(b)]{PinkA-Combination-o}), there exist a connected Shimura subdatum $(G,\sX)$ of $({GSp}_{2g},\frak{H}_g)$ and a decomposition $(G^{{ad}},\sX) = (G_1,\sX_1) \times (G_2,\sX_2)$ and a point $\tilde{x}_2 \in \sX_2$ such that $\tilde{F} = \sX_1 \times\{\tilde{x}_2\}$.

Take any point $(\tilde{x}_1,\tilde{x}_2) \in \sX_1 \times \{\tilde{x}_2\} \subset \sX$, we have the Harish-Chandra embedding of $\sX$ into $T_{(\tilde{x}_1,\tilde{x}_2)} \sX$, the tangent space of $\sX$ at $(\tilde{x}_1,\tilde{x}_2)$. We have also the Harish-Chandra embedding of $\frak{H}_g$ into $T_{(\tilde{x}_1,\tilde{x}_2)} \frak{H}_g$. These two embeddings are compatible in the following sense: $T_{(\tilde{x}_1,\tilde{x}_2)} \sX$ is a linear subspace of $T_{(\tilde{x}_1,\tilde{x}_2)} \frak{H}_g$ and $\sX = \frak{H}_g \cap T_{(\tilde{x}_1,\tilde{x}_2)} \sX$. This is proven in Helgason \cite[Chapter~VIII, $\mathsection$7]{HelgasonDifferential-Ge}. We refer to \cite[Chapter~5, $\mathsection$2, Theorem~1]{MokMetric-Rigidity} for the presentation. In particular, the Harish-Chandra embedding realizes $\frak{H}_g$ as the unit ball in $\C^{g(g+1)/2}$.

The natural embedding of the Hermitian symmetric space $\frak{H}_g$ into ${\frak p}_g$ can be realized as the Harish-Chandra embedding mentioned above composed with a linear transformation which we call $\ell$. Define ${\frak p} = \ell(T_{(\tilde{x}_1,\tilde{x}_2)} \sX)$. Then ${\frak p}$ is an affine subspace of ${\frak p}_g$. Now by the compatibility mentioned in the last paragraph, we have that $\sX = {\frak p} \cap \frak{H}_g$.

The decomposition of Hermitian symmetric spaces $\sX = \sX_1 \times \sX_2$ gives a decomposition ${\frak p} = {\frak p}_1 \times {\frak p}_2$ as complex spaces, and $\tilde{F} = \sX_1 \times \{\tilde{x}_2\}$ is then $({\frak p}_1 \times\{0\}) \cap \sX = ({\frak p}_1 \times\{0\}) \cap \frak{H}_g$. Hence we are done.
\end{proof}

Now we are ready to state the Ax-Schanuel theorem for $\sA_g$. It is recently proven by Mok-Pila-Tsimerman \cite{MokAx-Schanuel-for}. This theorem has several equivalent forms, whose equivalences are not hard to show. For our purpose we only need the following weak form.
\begin{thm}\label{ThmAS}
Let $\tilde{Y}$ be an irreducible complex analytic subvariety of $\frak{H}_g$. Let ${\tilde{Y}}^{biZar}$ be the smallest bi-algebraic subset of $\frak{H}_g$ which contains $\tilde{Y}$. Then
\[
\dim \tilde{Y}^{{Zar}} + \dim {u}(\tilde{Y})^{{Zar}} \ge \dim \tilde{Y}  + \dim {\tilde{Y}}^{biZar}.
\]
Here $\tilde{Y}^{{Zar}}$ means the smallest irreducible algebraic subset of $\frak{H}_g$ which contains $\tilde{Y}$.
\end{thm}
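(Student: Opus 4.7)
My plan follows the o-minimal strategy pioneered by Pila and realised for $\sA_g$ by Mok--Pila--Tsimerman. First, I would perform a \emph{reduction to the case} $\tilde{Y}^{{biZar}} = \frak{H}_g$. A bi-algebraic subvariety $F \subset \sA_g$ is weakly special (Ullmo--Yafaev), hence arises from a sub-Shimura datum; its preimage in $\frak{H}_g$ is an affine linear slice (Lemma~\ref{LemmaWeaklySpecialLinear}) isomorphic to a product of smaller Hermitian symmetric domains, and carries its own compatible uniformisation. The statement of Ax--Schanuel is inherited by these sub-Shimura data, so replacing $(\frak{H}_g, \sA_g)$ by $(\tilde{Y}^{{biZar}}, {u}(\tilde{Y}^{{biZar}})^{{Zar}})$ reduces us to proving: if $\tilde{Y}$ is not contained in any proper bi-algebraic subset of $\frak{H}_g$, then
\[
\dim \tilde{Y}^{{Zar}} + \dim {u}(\tilde{Y})^{{Zar}} \ge \dim \tilde{Y} + \dim \frak{H}_g.
\]

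Next I would argue by contradiction. Suppose the sum of codimensions of $\tilde{Y}^{{Zar}}$ in $\frak{H}_g$ and of ${u}(\tilde{Y})^{{Zar}}$ in $\sA_g$ strictly exceeds the codimension of $\tilde{Y}$ in $\frak{H}_g$, so that $\tilde{Y}$ sits in an ``excessive intersection'' of $\tilde{Y}^{{Zar}}$ with ${u}^{-1}({u}(\tilde{Y})^{{Zar}})$. Fix a Siegel fundamental set $\mathcal{F} \subset \frak{H}_g$ for $\Gamma := {Sp}_{2g}(\Z)$; by Peterzil--Starchenko (applied to the Fourier--Jacobi expansion of the theta function), the restriction ${u}|_{\mathcal{F}}$ is definable in the o-minimal structure $\R_{\mathrm{an},\exp}$. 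Counting those $\gamma \in \Gamma$ for which $\gamma \tilde{Y}^{{Zar}}$ meets ${u}^{-1}({u}(\tilde{Y})^{{Zar}}) \cap \mathcal{F}$ in the predicted excess dimension, one shows, using the volume and complexity estimates of Hwang--To and Mok on tubular neighbourhoods in Hermitian symmetric domains, that the number of such $\gamma$ of height at most $T$ grows at least like $T^{c}$ for some $c > 0$. The Pila--Wilkie counting theorem then forces the presence of a positive-dimensional semi-algebraic block of such $\gamma$; its Zariski closure in ${Sp}_{2g,\R}$ is a positive-dimensional real algebraic subgroup $H$ whose neutral component stabilises a component of $\tilde{Y}^{{Zar}}$.

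The final and hardest step is the \emph{arithmeticity of the stabiliser}: one must descend $H$ from a real algebraic subgroup to a $\Q$-algebraic normal subgroup of the generic Mumford--Tate group of $\tilde{Y}^{{Zar}}$, and then produce a proper weakly special subvariety of $\sA_g$ containing ${u}(\tilde{Y})^{{Zar}}$, contradicting the reduction step. The abundance of $\Gamma$-translates produced by Pila--Wilkie yields many integer elements inside $H(\R)$; by a density/rigidity argument combined with Andr\'e's monodromy theorem on relative periods (cf.\ \cite{A1}) and Deligne's semisimplicity of the algebraic monodromy, one can promote $H$ to a $\Q$-subgroup of ${Sp}_{2g}$ that is normal in the derived Mumford--Tate group. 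The orbit $H \cdot \tilde{Y}^{{Zar}}$ is then a proper bi-algebraic subset of $\frak{H}_g$ containing $\tilde{Y}$, contradicting $\tilde{Y}^{{biZar}} = \frak{H}_g$. The main obstacle, both conceptually and technically, is this simultaneous management of the o-minimal counting (requiring sharp volume estimates in the non-compact domain $\frak{H}_g$) and the arithmetic descent of the Pila--Wilkie block to an honest $\Q$-subgroup: this interplay between hyperbolic geometry and transcendence is precisely what makes Ax--Schanuel for $\sA_g$ so delicate.
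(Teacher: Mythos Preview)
The paper does not prove this statement. Theorem~\ref{ThmAS} is introduced in the appendix with the sentence ``It is recently proven by Mok--Pila--Tsimerman \cite{MokAx-Schanuel-for}'' and is then used as a black box in the proofs of Proposition~\ref{PropProperBiAlgSubset} and Theorem~\ref{ThmStrongerForm}. There is therefore no in-paper argument to compare your proposal against.

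That said, your outline does follow the broad architecture of the Mok--Pila--Tsimerman proof: reduction to $\tilde{Y}^{\mathrm{biZar}}=\frak{H}_g$, o-minimal definability of the uniformisation on a Siegel fundamental set, a point-counting argument combining Pila--Wilkie with hyperbolic volume growth, and extraction of a positive-dimensional stabiliser producing a proper weakly special subvariety. Two details are slightly off. First, the descent of the stabiliser $H$ to a $\Q$-group in \cite{MokAx-Schanuel-for} does not invoke Andr\'e's relative monodromy theorem \cite{A1} (which concerns abelian logarithms in mixed Hodge structures, and is used elsewhere in this paper for a quite different purpose); rather, it proceeds via an inductive scheme on weakly special subvarieties together with the already-established Ax--Lindemann theorem. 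Second, your sketch remains at the level of a plan: the genuinely hard inputs---the uniform volume/height comparison in a non-compact Hermitian symmetric domain and the careful management of the family of translates needed to feed Pila--Wilkie---are named but not carried out. As an indication of where the cited proof lives, your proposal is reasonable; just be clear that the present paper neither attempts nor needs to supply such an argument.
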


We end this section by proving the equivalence of the following statements.
\begin{lemma}\label{LemmaEquivalentConditionsForProperSpecial}
Let $S$ be an irreducible subvariety of $\sA_g$. Then the following statements are equivalent:
\begin{enumerate}
\item[(i)] The variety $S$ is not contained in any proper special subvariety of $\sA_g$.
\item[(ii)] The variety $S$ contains a point with Mumford-Tate group ${GSp}_{2g}$.
\item[(iii)] The monodromy group of $A/S$ is Zariski dense in ${Sp}_{2g}$.
\item[(iv)] The variety $S$ is not contained in any proper bi-algebraic subvariety of $\sA_g$ of positive dimension.
\item[(v)] There exists a point $s \in S(\C)$ such that the following condition holds: $s$ is not contained in any proper bi-algebraic subvariety of $\sA_g$ of positive dimension.
\end{enumerate}
\end{lemma}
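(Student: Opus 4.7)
\textbf{Proof plan for Lemma~\ref{LemmaEquivalentConditionsForProperSpecial}.}
My approach is to establish the equivalences by cycling through the easier pairs first, exploiting that conditions (i)--(iii) are ``standard Mumford--Tate'' statements and that (iv)--(v) are their bi-algebraic avatars, linked via the Ullmo--Yafaev identification of bi-algebraic and weakly special subvarieties. I will therefore prove (i)$\Leftrightarrow$(ii)$\Leftrightarrow$(iii), then (i)$\Leftrightarrow$(iv), and finally (iv)$\Leftrightarrow$(v).

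For (i)$\Leftrightarrow$(ii), I will use the classical theorem of Moonen--Edixhoven--Yafaev that the smallest special subvariety of $\sA_g$ containing $S$ is the locus where the Mumford--Tate group coincides with the \emph{generic} Mumford--Tate group of $S$. Thus (i) says exactly that this generic group is $GSp_{2g}$. By Cattani--Deligne--Kaplan (or the standard Hodge-locus argument), the set of points where the Mumford--Tate group is strictly smaller is a countable union of proper analytic subvarieties, so a Hodge-generic point exists in $S$ iff the generic Mumford--Tate group is $GSp_{2g}$, giving (ii). For (ii)$\Leftrightarrow$(iii) I invoke the Deligne--André theorem: the connected algebraic monodromy group $H_S^{\circ}$ is a normal subgroup of the derived Mumford--Tate group $MT(s)^{\mathrm{der}}$ at any Hodge-generic $s \in S$. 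If (ii) holds then $MT(s)^{\mathrm{der}} = Sp_{2g}$, which is an almost simple group; since $\dim\mu_A(S)\ge g\ge 1$ forces the family to be non-isotrivial and thus $H_S^{\circ}$ non-trivial, we conclude $H_S^{\circ} = Sp_{2g}$. Conversely, if $H_S^{\circ} = Sp_{2g}$ then at a very general $s$ one has $MT(s)^{\mathrm{der}} \supseteq Sp_{2g}$, so $MT(s)^{\mathrm{der}} = Sp_{2g}$; the Hodge cocharacter then forces $MT(s) = GSp_{2g}$ (otherwise $MT(s) \subseteq Sp_{2g}$, contradicting that the weight cocharacter is a non-trivial central piece).

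For (i)$\Leftrightarrow$(iv) I use that, by Ullmo--Yafaev \cite[Thm.~1.2]{UllmoA-characterisat}, bi-algebraic subvarieties of $\sA_g$ are precisely the weakly special ones in the sense of Pink, and every special subvariety is weakly special. If $S$ is contained in a proper special $Z$, then $Z$ is bi-algebraic of positive dimension (since $\dim Z \ge \dim S \ge g \ge 1$), giving (iv)$\Rightarrow$(i) by contraposition. For (i)$\Rightarrow$(iv), given a proper bi-algebraic $Z \supseteq S$ of positive dimension, write $Z$ as the image of $\sX_1 \times \{x_2\}$ for a proper Shimura subdatum $(G,\sX) \hookrightarrow (GSp_{2g},\frak H_g)$ with decomposition $(G^{\mathrm{ad}},\sX^{\mathrm{ad}}) = (G_1,\sX_1) \times (G_2,\sX_2)$. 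The smallest special subvariety of $\sA_g$ containing $Z$ is the image of $\sX_1 \times Z_2^{\mathrm{sp}}$, where $Z_2^{\mathrm{sp}}$ is the smallest special subvariety of the Shimura variety associated to $(G_2,\sX_2)$ containing $x_2$; this is contained in the image of $\sX$ in $\sA_g$, which is a proper special subvariety since $G \subsetneq GSp_{2g}$. This proper special contains $S$, contradicting (i).

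Finally, for (iv)$\Leftrightarrow$(v), the implication (v)$\Rightarrow$(iv) is immediate: if $s \in S$ lies in no proper positive-dimensional bi-algebraic subvariety and $S \subseteq Z$ for such a $Z$, then $s \in Z$, a contradiction. For (iv)$\Rightarrow$(v), apply the already proved equivalence (iv)$\Leftrightarrow$(ii) to find a point $s_0 \in S$ with $MT(s_0) = GSp_{2g}$. I claim such $s_0$ works: if $s_0$ lay in a proper weakly special subvariety $Z$ of positive dimension, coming from a proper Shimura subdatum $(G,\sX)\subsetneq(GSp_{2g},\frak H_g)$, then by the defining property of weakly special subvarieties one would have $MT(s_0) \subseteq G \subsetneq GSp_{2g}$, contradicting the choice of $s_0$. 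The main subtlety in the whole argument is the interplay in Step~3 between \emph{special} and \emph{weakly special} subvarieties: one must verify that the canonical enlargement of a proper weakly special to its smallest enclosing special remains proper in $\sA_g$, which is where the product structure of the adjoint Shimura datum is used essentially. \qed
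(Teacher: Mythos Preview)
Your approach is essentially the paper's: the Deligne--Andr\'e theorem links (i)--(iii), and the Ullmo--Yafaev identification of bi-algebraic with weakly special subvarieties handles (iv) and (v). The paper organizes the implications as (i)$\Rightarrow$(ii)$\Rightarrow$(iii)$\Rightarrow$(i) together with (ii)$\Rightarrow$(v)$\Rightarrow$(iv)$\Rightarrow$(i), so it never needs a direct (i)$\Rightarrow$(iv); but the content is the same.

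There is one gap you flag in your last sentence but do not close. In both your (i)$\Rightarrow$(iv) and (iv)$\Rightarrow$(v) you write the proper positive-dimensional weakly special $Z$ as the image of $\sX_1\times\{x_2\}$ for a \emph{proper} Shimura subdatum $(G,\sX)\subsetneq({GSp}_{2g},\frak H_g)$, and everything that follows hinges on that properness. But the definition of weakly special allows $(G,\sX)=({GSp}_{2g},\frak H_g)$; you must rule this out. The missing line is that ${GSp}_{2g}^{\mathrm{ad}}$ is \emph{simple}, so the full adjoint datum admits no non-trivial product decomposition, and hence the only weakly specials arising from the full datum are $\frak H_g$ itself and single points---both excluded by the hypotheses on $Z$. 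This is exactly what the paper makes explicit in its proof of (ii)$\Rightarrow$(v): having chosen $s$ with $\mathrm{MT}(s)={GSp}_{2g}$, any subdatum whose underlying space contains $\tilde s$ must be the full one, whereupon simplicity of $G^{\mathrm{ad}}$ forces $\tilde F=\frak H_g$ or $\tilde F$ a point. Your phrase ``the product structure of the adjoint Shimura datum is used essentially'' gestures at this, but the actual argument is the \emph{absence} of any non-trivial product structure on ${GSp}_{2g}^{\mathrm{ad}}$.

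A minor point: in (ii)$\Rightarrow$(iii) you invoke $\dim\mu_A(S)\ge g\ge 1$ to get $H_S^\circ$ non-trivial, but this is not among the hypotheses of the lemma. Some positivity of $\dim S$ is indeed needed (a single Hodge-generic point satisfies (ii) but has trivial monodromy); the paper's proof carries the same implicit assumption when it asserts that $H_S^\circ$ is a \emph{non-trivial} normal subgroup of ${Sp}_{2g}$.
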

\begin{proof}
By Deligne-Andr\'{e}, a very general point in $S(\C)$ has the same Mumford-Tate group which we denote by $\mathrm{MT}(S)$. The Mumford-Tate group $\mathrm{MT}(S)$ is a reductive group, and a subgroup of finite index of the monodromy group is contained in $\mathrm{MT}(S)^{\mathrm{der}}$. Here ``very general'' means that the point is taken outside an at most countable union of proper subvarieties of $S$. We refer to \cite[Lemma~4]{A1} for these facts.

Now let us prove (iii) $\Rightarrow$ (i). If the monodromy group of $A/S$ is Zariski dense in ${Sp}_{2g}$, then ${Sp}_{2g}$ is a subgroup of $\mathrm{MT}(S)^{\mathrm{der}} < {GSp}_{2g}^{\mathrm{der}} = {Sp}_{2g}$. Hence $\mathrm{MT}(S)^{\mathrm{der}} = {Sp}_{2g}$. So $\mathrm{MT}(S) = {GSp}_{2g}$.\footnote{By Hodge theory, $\mathbb G_m = Z({GSp}_{2g})$ is contained in $\mathrm{MT}(S)$. In this paper we only need ${Sp}_{2g} < \mathrm{MT}(S)$.} 

For (ii) $\Rightarrow$ (iii), we use a stronger result of Andr\'{e}. Let $H_S^{\circ}$ be the neutral component of the Zariski closure of the monodromy group of $A/S$ in ${GSp}_{2g}$. Then by \cite[Theorem~1]{A1}, we have that $H_S^{\circ}$ is a non-trivial normal subgroup of ${GSp}_{2g}^{\mathrm{der}} = {Sp}_{2g}$. But ${Sp}_{2g}$ is simple, so $H_S^{\circ} = {Sp}_{2g}$.

Let us prove (i) $\Rightarrow$ (ii). The smallest special subvariety of $\sA_g$ which contains $S$ is defined by a Shimura subdatum with underlying group $\mathrm{MT}(S)$. If $\mathrm{MT}(S) \not= {GSp}_{2g}$, then the smallest special subvariety of $\sA_g$ is not $\sA_g$, which contradicts the assumption of (i).

We have (iv) $\Rightarrow$ (i) since every special subvariety of $\sA_g$ is bi-algebraic.

The implication (v) $\Rightarrow$ (iv) is easy.

It remains to prove (ii) $\Rightarrow$ (v). Let $s \in S(\C)$ be such that $\mathrm{MT}(s) = {GSp}_{2g}$. Take $\tilde{s} \in {u}^{-1}(s)$. Let $F$ be a bi-algebraic subvariety of $\sA_g$ which contains $s$ with $\dim F > 0$. Let $\tilde{F}$ be an irreducible component of ${u}^{-1}(F)$ which contains $\tilde{s}$. It suffices to prove $\tilde{F} = \frak{H}_g$.

The proof goes as follows. By a result of Ullmo-Yafaev \cite[Theorem~1.2]{UllmoA-characterisat}, bi-algebraic subsets of $\frak{H}_g$ are precisely the weakly special subsets of $\frak{H}_g$. Hence $\tilde{F}$ is a weakly special subset of $\frak{H}_g$. By definition of weakly special subvarieties (see \cite[Definition~2.1]{UllmoA-characterisat} or \cite[Definition~4.1.(b)]{PinkA-Combination-o}), there exist a connected Shimura subdatum $(G,\sX)$ of $({GSp}_{2g},\frak{H}_g)$ and a decomposition $(G^{\mathrm{ad}},\sX) = (G_1,\sX_1) \times (G_2,\sX_2)$ and a point $\tilde{x}_2 \in \sX_2$ such that $\tilde{F} = \sX_1 \times\{\tilde{x}_2\}$. The condition $\mathrm{MT}(\tilde{s}) = {GSp}_{2g}$ implies that the smallest Shimura subdatum of $({GSp}_{2g},\frak{H}_g)$ whose underlying space contains $\tilde{s}$ is $({GSp}_{2g},\frak{H}_g)$. Therefore $(G,\sX) = ({GSp}_{2g},\frak{H}_g)$. But then $G^{\mathrm{ad}} = {GSp}_{2g}^{\mathrm{ad}}$ is a simple group, and hence either $\tilde{F} = \frak{H}_g$ or $\tilde{F}$ is a point. But $\dim\tilde{F} > 0$, so $\tilde{F}= \frak{H}_g$.
\end{proof}

\subsection{Proof of Theorem~\ref{ThmGeomCriterionHodgeGeneric}}
We may replace $S$ by $\mu_A(S)$ and hence assume that $S$ is an irreducible subvariety of $\sA_g$ of dimension $\ge g$. Recall the uniformization ${u} {:}\; \frak{H}_g \rightarrow \sA_g$ and our convention that $\tilde{S}$ is a complex analytic irreducible component of $u^{-1}(S)$.


The key to prove Theorem~\ref{ThmGeomCriterionHodgeGeneric} is the following proposition, whose proof uses Ax-Schanuel.
\begin{prop}\label{PropProperBiAlgSubset}
Suppose Condition~ACZ is satisfied. Then for any $\tilde{s} \in \tilde{S}$, there exists a bi-algebraic subset $\tilde{F}$ of positive dimension, \textbf{properly} contained in $\frak{H}_g$, such that $\tilde{s} \in \tilde{F}$.
\end{prop}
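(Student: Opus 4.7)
The plan is to apply the Ax-Schanuel theorem (Theorem~\ref{ThmAS}) to the complex analytic subvariety $\tilde{C}$ supplied by Condition~ACZ for a well-chosen parameter $\mathbf{c}$, and then to take the desired bi-algebraic set to be the bi-algebraic closure $\tilde{F} := \tilde{C}^{\mathrm{biZar}}$, whose existence is guaranteed by Lemma~\ref{LemmaIntersectionOfBiAlgebraic}. Almost everything is a dimension count: the only leverage comes from Ax-Schanuel, which forces $\tilde{C}^{\mathrm{biZar}}$ to be strictly smaller than $\frak{H}_g$ because $\tilde{C}$ is simultaneously algebraically constrained (it sits in the linear slice $H_{\mathbf{c},\tilde{s}}$) and analytically large (it has dimension $\dim S - g + 1$).

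Fix $\tilde{s} \in \tilde{S}$ and pick any nonzero $\mathbf{c} \in \C^g$. Condition~ACZ produces an irreducible complex analytic subvariety $\tilde{C} \subset \tilde{S}$ of dimension $\dim S - g + 1$, passing through $\tilde{s}$, on which the linear functional $Z \mapsto Z\mathbf{c}$ is constant; equivalently $\tilde{C} \subset H_{\mathbf{c},\tilde{s}}$. A direct linear-algebra inspection shows that for $\mathbf{c} \neq 0$ the linear map $\frak{p}_g \to \C^g$, $Z \mapsto Z\mathbf{c}$, is surjective (for $\mathbf{c} = e_1$ it extracts the first column of a symmetric matrix, and the general case follows by a $\mathrm{GL}_g$-change of coordinates), so $H_{\mathbf{c},\tilde{s}}$ is irreducible algebraic in $\frak{H}_g$ of complex dimension $\tfrac{g(g+1)}{2} - g = \tfrac{g(g-1)}{2}$. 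Hence $\dim \tilde{C}^{\mathrm{Zar}} \le \tfrac{g(g-1)}{2}$, and since $u(\tilde{C}) \subset S$ we also have $\dim u(\tilde{C})^{\mathrm{Zar}} \le \dim S$.

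Plugging these bounds into Theorem~\ref{ThmAS} applied to $\tilde{Y} = \tilde{C}$ gives
\[
\tfrac{g(g-1)}{2} + \dim S \;\ge\; \dim \tilde{C}^{\mathrm{Zar}} + \dim u(\tilde{C})^{\mathrm{Zar}} \;\ge\; \dim \tilde{C} + \dim \tilde{C}^{\mathrm{biZar}} \;=\; (\dim S - g + 1) + \dim \tilde{C}^{\mathrm{biZar}},
\]
so $\dim \tilde{C}^{\mathrm{biZar}} \le \tfrac{g(g+1)}{2} - 1 = \dim \frak{H}_g - 1$. Therefore $\tilde{F} := \tilde{C}^{\mathrm{biZar}}$ is a proper bi-algebraic subset of $\frak{H}_g$; it contains $\tilde{s}$ because $\tilde{s} \in \tilde{C} \subset \tilde{F}$, and $\dim \tilde{F} \ge \dim \tilde{C} = \dim S - g + 1 \ge 1$ thanks to the hypothesis $\dim S \ge g$. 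This yields the required $\tilde{F}$.

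The only genuinely non-trivial input is Ax-Schanuel itself (Theorem~\ref{ThmAS}); the remaining hard point, such as it is, is to confirm that the linear slice $H_{\mathbf{c},\tilde{s}}$ has the expected codimension $g$ in $\frak{H}_g$ whenever $\mathbf{c} \neq 0$, so that the Ax-Schanuel inequality cancels against $\dim S$ and leaves behind exactly the strictly negative defect $g - 1 - g = -1$ needed to force $\dim \tilde{C}^{\mathrm{biZar}} < \dim \frak{H}_g$. All other steps reduce to bookkeeping.
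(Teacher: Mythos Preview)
Your proof is correct and follows essentially the same route as the paper: apply Ax-Schanuel to the analytic subvariety $\tilde{C}$ furnished by Condition~ACZ, bound $\dim \tilde{C}^{\mathrm{Zar}}$ via the linear slice $H_{\mathbf{c},\tilde{s}}$ and $\dim u(\tilde{C})^{\mathrm{Zar}}$ via $S$, and take $\tilde{F} = \tilde{C}^{\mathrm{biZar}}$. The only cosmetic differences are that you make the requirement $\mathbf{c}\neq 0$ explicit and derive the bound $\dim \tilde{C}^{\mathrm{biZar}} \le \dim\frak{H}_g - 1$ directly rather than by contradiction.
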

\begin{proof} 
 
Fix a $\mathbf{c} \in \C^g$ and define the following subspace of $\frak{H}_g$
\[
H_{\mathbf{c},\tilde{s}} := \{Z \in \frak{H}_g: Z \mathbf{c} = \tilde{s}  \mathbf{c}\}.
\]
Then $H_{\mathbf{c},\tilde{s}}$ has codimension $g$ in $\frak{H}_g$. Apply Condition~ACZ to this $\tilde{s} \in \tilde{S}$ and $\mathbf{c} \in \C^g$. Hence we obtain a complex analytic variety $\tilde{C}$ of dimension $\dim S - g + 1$ passing through $\tilde{s}$ such that $\tilde{C} \subset \tilde{S} \cap H_{\mathbf{c},\tilde{s}}$.

Now $\tilde{C} \subset H_{\mathbf{c},\tilde{s}}$, so we have
\begin{equation}\label{EqDimTildeCZar}
\dim \tilde{C}^{{Zar}} \le \dim H_{\mathbf{c},\tilde{s}} = \dim \frak{H}_g -g.
\end{equation}
On the other hand $\tilde{C} \subset \tilde{S}$, so ${u}(\tilde{C}) \subset {u}(\tilde{S}) = S$. Hence
\begin{equation}\label{EqDimCZar}
\dim {u}(\tilde{C})^{{Zar}} \le \dim S.
\end{equation}
 Apply Ax-Schanuel, namely Theorem~\ref{ThmAS}, to $\tilde{C}$. We obtain
\begin{equation}\label{EqDimFromAS}
\dim \tilde{C}^{\mathrm{Zar}} + \dim {u}(\tilde{C})^{{Zar}}  \ge \dim \tilde{C} + \dim \tilde{C}^{biZar}.
\end{equation}
 
Assume $\tilde{C}^{biZar} = \frak{H}_g$. Then we have
\[
(\dim \frak{H}_g - g) + \dim S  \ge \dim \tilde{C} + \dim \frak{H}_g
\]
by \eqref{EqDimTildeCZar}, \eqref{EqDimCZar} and \eqref{EqDimFromAS}. But this cannot hold since $\dim\tilde{C} = \dim S - g + 1 > 0$. Hence $\tilde{C}^{biZar} \not= \frak{H}_g$.

On the other hand $\dim \tilde{C}^{biZar} > 0$ since $\dim \tilde{C} = \dim S - g + 1 \ge 1$. So we can take the desired $\tilde{F}$ to be $\tilde{C}^{biZar}$.
\end{proof}

Before moving on, we point out that we have not yet used the full strength of Condition~ACZ since we did not vary the variable $\mathbf{c}$.

Now let us proof Theorem~\ref{ThmGeomCriterionHodgeGeneric}.
\begin{proof}[Proof of Theorem~\ref{ThmGeomCriterionHodgeGeneric}]
 
Suppose Theorem~\ref{ThmGeomCriterionHodgeGeneric} is not true.

By condition (v) of Lemma~\ref{LemmaEquivalentConditionsForProperSpecial}, there exists a point $s \in S(\C)$ such that $s$ is not contained in any proper bi-algebraic subvariety of $\sA_g$ of positive dimension.

Take $\tilde{s}$ to be a point in ${u}^{-1}(s)$ for this $s$. Applying Proposition~\ref{PropProperBiAlgSubset} to $\tilde{s}$, we get a bi-algebraic subset $\tilde{F}$ of positive dimension, properly contained in $\frak{H}_g$, such that $\tilde{s} \in \tilde{F}$. But then ${u}(\tilde{F})$ is a proper bi-algebraic subvariety of $\sA_g$ of positive dimension which contains $s$.
 Now we get a contradition.
 \end{proof}

\subsection{Proof of Theorem~\ref{ThmStrongerForm}}
In fact the same techniques for Theorem~\ref{ThmGeomCriterionHodgeGeneric} can be used to prove Theorem~\ref{ThmStrongerForm}.
\begin{proof}[Proof of Theorem~\ref{ThmStrongerForm}]
Suppose we have an abelian scheme $A/S$ satisfying the three properties.

Let $\mathbf{c} \in \C^g$ and $\tilde{s} \in \tilde{S}$ be as in condition (ii) of Theorem~\ref{ThmStrongerForm}. Then for $H_{\mathbf{c},\tilde{s}} = \{Z \in \frak{H}_g : Z \mathbf{c} = \tilde{s} \mathbf{c} \} \subset \frak{H}_g$, we have
\begin{equation}\label{EqCodimensionHypersurfaceInBizar}
{\rm{codim}}_{\tilde{S}^{\mathrm{biZar}}}(H_{\mathbf{c},\tilde{s}} \cap \tilde{S}^{\mathrm{biZar}}) = g.
\end{equation}
Now that $\tilde{S}^{\mathrm{biZar}}$ is affine linear in $\frak{H}_g$ by Lemma~\ref{LemmaWeaklySpecialLinear}. 
So for such a $\mathbf{c}$, \eqref{EqCodimensionHypersurfaceInBizar} holds for any $\tilde{s} \in \tilde{S}$ because $H_{\mathbf{c},\tilde{s}}$ is also affine linear in $\frak{H}_g$. Hence we may assume that $\tilde{s}$ is Hodge generic in $\tilde{S}$, namely $\mathrm{MT}(\tilde{s}) = \mathrm{MT}(S)$. 

Applying Condition~ACZ to this $\tilde{s}$ and $\mathbf{c}$, we obtain a complex analytic variety $\tilde{C}$ of dimension $\dim S - g + 1$ passing through $\tilde{s}$ such that $\tilde{C} \subset \tilde{S} \cap H_{\mathbf{c},\tilde{s}}$. Then \eqref{EqCodimensionHypersurfaceInBizar} implies
\begin{equation}\label{EqDimTildeCZarGeneral}
\dim \tilde{C}^{{Zar}} \le \dim (H_{\mathbf{c},\tilde{s}} \cap \tilde{S}^{\mathrm{biZar}}) = \dim \tilde{S}^{\mathrm{biZar}} - g.
\end{equation}
On the other hand $\tilde{C} \subset \tilde{S}$, so ${u}(\tilde{C}) \subset {u}(\tilde{S}) = S$. Hence
\begin{equation}\label{EqDimCZarGeneral}
\dim {u}(\tilde{C})^{{Zar}} \le \dim S.
\end{equation}
Apply Ax-Schanuel, namely Theorem~\ref{ThmAS}, to $\tilde{C}$. We obtain
\begin{equation}\label{EqDimFromASGeneral}
\dim \tilde{C}^{{Zar}} + \dim {u}(\tilde{C})^{{Zar}} \ge \dim \tilde{C} + \dim \tilde{C}^{biZar} = \dim S - g + 1 + \dim \tilde{C}^{\mathrm{biZar}}.
\end{equation}

By \eqref{EqDimTildeCZarGeneral}, \eqref{EqDimCZarGeneral} and \eqref{EqDimFromASGeneral}, we get
\[
\dim \tilde{S}^{\mathrm{biZar}} > \dim \tilde{C}^{\mathrm{biZar}}.
\]
Thus in order to get a contradiction, it suffice to prove $\tilde{S}^{\mathrm{biZar}} = \tilde{C}^{\mathrm{biZar}}$. We shall use condition (i) of Theorem~\ref{ThmStrongerForm} to prove this fact.

The logarithmic Ax theorem for $\sA_g$ says that $\tilde{S}^{\mathrm{biZar}} = H_S^{\circ}(\R)^+\tilde{s}$. We refer to \cite[Theorem~8.1]{GaoTowards-the-And} for this theorem. Recall that $\tilde{s}$ is Hodge generic in $\tilde{S}$. Hence $H_S^{\circ}$ is normal in $\mathrm{MT}(\tilde{s})$ by Andr\'{e} \cite[Theorem~1]{A1}.

By Ullmo-Yafaev \cite[Theorem~1.2]{UllmoA-characterisat}, bi-algebraic subsets of $\frak{H}_g$ are precisely the weakly special subsets of $\frak{H}_g$. Now $\tilde{C}^{\mathrm{biZar}}$ contains $\tilde{s}$ which is Hodge generic in $\tilde{S}$, and $\tilde{C}^{\mathrm{biZar}} \subset \tilde{S}^{\mathrm{biZar}} \subset \mathrm{MT}(\tilde{s})(\R)^+\tilde{s}$. So by definition of weakly special subvarieties (see \cite[Definition~2.1]{UllmoA-characterisat} or \cite[Definition~4.1.(b)]{PinkA-Combination-o}), we have $\tilde{C}^{\mathrm{biZar}} = N(\R)^+\tilde{s}$ for some normal subgroup $N$ of $\mathrm{MT}(\tilde{s})$.

Now $N(\R)^+\tilde{s} \subset H_S^{\circ}(\R)^+\tilde{s}$, both $N$ and $H_S^{\circ}$ are normal subgroups of the reductive group $\mathrm{MT}(\tilde{s})$, and $H_S^{\circ}$ is simple by condition (i) of Theorem~\ref{ThmStrongerForm}. Hence $N(\R)^+\tilde{s} = H_S^{\circ}(\R)^+\tilde{s}$, and so $\tilde{C}^{\mathrm{biZar}} = \tilde{S}^{\mathrm{biZar}}$.
\end{proof}

 \end{sloppypar}

 \end{document}